\newtheorem{theorem}{Theorem}[section]
\newtheorem{claim}[theorem]{Claim}
\newtheorem{lemma}[theorem]{Lemma}
\newtheorem{proposition}[theorem]{Proposition}
\theoremstyle{definition}
\newtheorem{definition}[theorem]{Definition}
\newtheorem{question}[theorem]{Question}
\theoremstyle{remark}
\newtheorem{remark}[theorem]{Remark}
\def\smallbox#1{\leavevmode\thinspace\hbox{\vrule\vtop{\vbox
   {\hrule\kern1pt\hbox{\vphantom{\tt/}\thinspace{\tt#1}\thinspace}}
   \kern1pt\hrule}\vrule}\thinspace}
\theoremstyle{definition}
\newcommand{\crit}{\mathop{\mathrm{crit}}}
\newcommand{\Ult}{\operatorname{Ult}}
\newcommand{\Col}{\operatorname{Col}}
\newcommand{\fr}{{}^\frown}
\newcommand{\name}{\dot}
\newcommand{\can}{\check}
\newcommand{\la}{\langle}
\newcommand{\ra}{\rangle}
\newcommand{\uhr}{\restriction}
\newcommand{\ol}{\ol}
\newcommand{\po}{\mathbb{P}}
\newcommand{\qo}{\mathbb{Q}}
\newcommand{\A}{\mathcal{A}}
\renewcommand{\ol}{\overline}
\renewcommand{\succ}{\suc}
\newcommand{\otp}{\mathop{\mathrm{ot}}}
\newcommand{\Add}{\mathop{\mathrm{Add}}}
\DeclareMathOperator{\dom}{dom}
\DeclareMathOperator{\range}{range}
\DeclareMathOperator{\cf}{cf}
\DeclareMathOperator{\mc}{mc}
\DeclareMathOperator{\suc}{succ}
\DeclareMathOperator{\GCH}{GCH}
\DeclareMathOperator{\SCH}{SCH}
\DeclareMathOperator{\cof}{Cof}
\def\l{{\langle}}
\def\r{{\rangle}}
\title{Stationary Reflection and the Failure of the SCH}
\date{}
\thanks{The first author was partially supported by the Israel Science Foundation Grant 1832/19.}
\thanks{The second author was partially supported by FWF Lise Meitner grant 2650-N35 and the Israel Science Foundation Grant 1967/21.}
\thanks{The third author was partially supported by NSF grant DMS-1700425 and an NSERC Discovery grant.}
\author{Omer Ben-Neria}
\address{Einstein Institute of Mathematics,
 The Hebrew University of Jerusalem,
 Jerusalem 91904, Israel}
\email{omer.bn@mail.huji.ac.il}
\author{Yair Hayut}
\address{Einstein Institute of Mathematics,
 The Hebrew University of Jerusalem,
 Jerusalem 91904, Israel}
\email{yair.hayut@mail.huji.ac.il}
\author{Spencer Unger}
\address{Department of Mathematics, 
University of Toronto, Toronto, ON, Canada}
\email{unger.the.aronszajn.trees@gmail.com}
\begin{document}
\begin{abstract}
In this paper we prove that from large cardinals it is consistent that there is
a singular strong limit cardinal $\nu$ such that the singular cardinal
hypothesis fails at $\nu$ and every collection of fewer than $\cf(\nu)$
stationary subsets of $\nu^+$ reflects simultaneously.  For $\cf(\nu) > \omega$,
this situation was not previously known to be consistent.  Using different
methods, we reduce the upper bound on the consistency strength of this situation
for $\cf(\nu) = \omega$ to below a single partially supercompact cardinal.  The
previous upper bound of infinitely many supercompact cardinals was due to
Sharon.  \end{abstract} \maketitle

\noindent

\section{Introduction}

We study stationary reflection at successors of singular cardinals and its
connection with cardinal arithmetic.  We start by recalling some basic
definitions.  For an ordinal $\delta$ and a set $S \subseteq \delta$, we say that
$S$ is \emph{stationary} if it meets every closed and unbounded subset of
$\delta$.  If $\{S_i \mid i \in I\}$ is a collection of stationary subsets of a
regular cardinal $\kappa$, then we say that $\{S_i \mid i \in I\}$ reflects
simultaneously if there is an ordinal $\delta$ such that $S_i \cap \delta$ is
stationary for all $i \in I$.

The consistency of stationary reflection at the successor of singular cardinal
is already complex in the context of the generalized continuum hypothesis (GCH).
A theorem of Magidor \cite{magidor} shows that it is consistent relative to
the existence of infinitely many supercompact cardinals that every finite
collection of stationary subsets of $\aleph_{\omega+1}$ reflects.  Recently, the
second and third author \cite{HU} were able show the same result from an
assumption below the existence of a cardinal $\kappa$ which is
$\kappa^+$-supercompact.  Both of these models satisfy GCH.  Combining
stationary reflection at the successor of a singular cardinal with the failure
of $\SCH$ presents additional difficulties.

For a singular cardinal $\nu$, the singular cardinal hypothesis ($\SCH$) at $\nu$
is the assertion that if $\nu$ is strong limit, then $2^\nu = \nu^+$.  The
failure of the singular cardinal hypothesis at a singular cardinal $\nu$ is
known to imply the existence of many nonreflecting objects.  For instance,
Foreman and Todorcevic \cite{FT} have shown that the failure of the singular
cardinal hypothesis at $\nu$ implies that there are two stationary subsets of
$[\nu^+]^\omega$ which do not reflect simultaneously.  This was improved
by Shelah \cite{shelah} to obtain a single stationary subset of $[\nu^+]^\omega$
which does not reflect.  Reflection for stationary subsets of $[\nu^+]^\omega$
has a different character than reflection for stationary subsets of ordinals.

In \cite[Corollary 5.4]{SigmaPrikryI}, Poveda, Rinot and Sinapova showed that the failure of $\SCH$ at cardinal $\nu$ implies the failure of simultaneous reflection for collections of $\cf \nu$ many stationary subsets of $\nu^+$. 
In his PhD thesis from 2005, Sharon \cite{sharon} proved that relative to the existence of
infinitely many supercompact cardinals it is consistent that there is a singular
cardinal $\nu$ of cofinality $\omega$ such that $\SCH$ fails at $\nu$ and every
stationary subset of $\nu^+$ reflects.  Sharon's method is a tour de force construction, which builds on Gitik's long extenders forcing \cite{Gitik-HB} for $\omega$-sequence of hypermeasurable cardinals. As such, the construction does not extend to singular cardinals of uncountable cofinalities, and the question of whether the failure of $\SCH$ at singular $\kappa$ of uncountable cofinality together with stationary reflection at $\kappa^+$ is at all consistent.  
 
This paper follows a study by the authors on stationary reflection at successors of singular cardinals at which $\SCH$ fails. 
This study was prompted by two other recent studies.  First, the work of second
and third authors in \cite{HU} on stationary reflection in Prikry forcing
extensions from subcompactness assumptions. The arguments of \cite{HU} show how
to examine the stationary reflection in the extension by Prikry type forcing by
studying suitable iterated ultrapowers of $V$. This approach and method has been
highly effective in our situation and we follow it here. The second study is
Gitik's recent work \cite{Gitik-new} for blowing up the power of a singular
cardinal using a Mitchell order increasing sequence of overlapping extenders.
The new forcing machinery of \cite{Gitik-new} gives new models combining the
failure of $\SCH$ with reflection properties at successors of singulars.
Moreover the arguments are uniform in the choice of cofinality.  For example,
Gitik has shown that in a related model the tree property holds at $\kappa^+$
\cite{gitiktree} and that for all $\delta < \kappa$ there is a stationary subset
of $\kappa^+$ of ordinals of cofinality greater than $\delta$ which is not a
member of $I[\kappa^+]$ \cite{gitikap}.

Our first theorem provides a model for stationary reflection the successor of a
singular cardinal $\nu$ where $\SCH$ fails and the cofinality of $\nu$ can be some
arbitrary cardinal chosen in advance.

\begin{theorem}\label{mainthm} Let $\eta < \lambda$ be regular cardinals.
Suppose that there is an increasing sequence of cardinals $\langle \kappa_\alpha
\mid \alpha < \eta \rangle$ with
\begin{enumerate}
\item $\eta< \kappa_0$,
\item for each $\alpha < \eta$, $\kappa_\alpha$
carries a $(\kappa_\alpha,\lambda)$-extender $E_\alpha$ and there is a
supercompact cardinal between $\sup_{\beta<\alpha}\kappa_\beta$ and
$\kappa_\alpha$, and 
\item the sequence $\langle E_{\alpha} \mid \alpha < \eta \rangle$ is Mitchell
order increasing and coherent.
\end{enumerate}
Then, there is a cardinal and cofinality preserving extension in which, setting
$\kappa = \sup_{\alpha < \eta}\kappa_\alpha$, $2^{\kappa} = \lambda$ and every
collection of fewer than $\eta$ stationary subsets of $\kappa^+$ reflects. \footnote{After proving this theorem, Gitik also gave an alternative proof for the same result, see \cite{GitikReflNotSCH}}
\end{theorem}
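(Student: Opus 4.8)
The plan is to combine the extender-based Prikry-type forcing of \cite{Gitik-new} for Mitchell-increasing coherent sequences with the method of \cite{HU}, in which stationary reflection after a Prikry-type forcing is studied by lifting ultrapower embeddings of $V$. First I would build the model: in $V$, run a Laver-style preparation $\ro$ whose only nontrivial stages lie at the supercompacts $\sigma_\alpha$ between $\sup_{\beta<\alpha}\kappa_\beta$ and $\kappa_\alpha$, arranging that each $\sigma_\alpha$ is rendered indestructible enough for its relevant $\kappa^+$-supercompactness embeddings to lift through the main forcing, while $\la E_\alpha\mid\alpha<\eta\ra$ retains its coherence and Mitchell-order properties; this is possible because $\ro$ has no stage in $(\sigma_\alpha,\sigma_{\alpha+1})$, so around each $\kappa_\alpha$ it factors into a part of size $<\kappa_\alpha$ and a $\kappa_\alpha^+$-closed part, neither of which disturbs $E_\alpha$. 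Over $V^{\ro}$, which I rename $V$, let $\po$ be Gitik's forcing for $\la E_\alpha\mid\alpha<\eta\ra$; from \cite{Gitik-new} I take that $\po$ has the Prikry property, preserves cardinals and cofinalities, makes $\cf(\kappa)=\eta$, forces $2^{\kappa}=\lambda$, and factors so that for each $\alpha$ its $<\kappa_\alpha$-part and $\geq\kappa_\alpha$-part separate --- this is precisely where a supercompact strictly between consecutive $\kappa_\alpha$'s is used. Fix a $\po$-generic $G$; only the reflection clause remains, and we argue in $V[G]$.

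Suppose toward a contradiction that a condition $p_0$ forces that $\la\dot S_i\mid i<\theta\ra$ is a sequence of $\theta<\eta$ stationary subsets of $\kappa^+$ with no common reflection point. Since fewer than $\kappa^+$ nonstationary subsets of $\kappa^+$ cannot cover a stationary set, and there are at most $\kappa$ regular cardinals below $\kappa$, we may shrink each $\dot S_i$ so that $p_0$ forces $\dot S_i\subseteq\{\gamma<\kappa^+\mid\cf\gamma=\mu_i\}$ for a regular $\mu_i<\kappa$; as $\theta<\eta=\cf(\kappa)$ we get $\mu:=\sup_{i<\theta}\mu_i<\kappa$, so we fix $\alpha^*<\eta$ with $\eta,\mu<\sigma_{\alpha^*}=:\sigma$ --- then every $\dot S_i$ is forced to concentrate on ordinals of cofinality $<\sigma$. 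In $V$, fix a $\lambda^+$-supercompactness embedding $j\colon V\to M$ with $\crit(j)=\sigma$, $j(\sigma)>\lambda$ and ${}^{\lambda}M\subseteq M$; then $\po$, the names $\dot S_i$, the condition $p_0$, and all clubs of $\kappa^+$ of $V[G]$ belong to $M[G]$, we have $j(\eta)=\eta$, and $j$ is continuous at $\kappa$ (as $\cf\kappa=\eta<\sigma$) and at every ordinal of cofinality $<\sigma$.

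The crux is to lift $j$ to an elementary $j\colon V[G]\to M[G^*]$ with $M$-generic $G^*\subseteq j(\po)$ and $j[G]\subseteq G^*$. In $j(\po)$ the first $\alpha^*$ coordinates carry $j(E_\beta)$ on the unchanged cardinals $\kappa_\beta$ (lengthened to $j(\lambda)$ generators, agreeing with $E_\beta$ below $\lambda$), while the coordinates $\beta\geq\alpha^*$ carry extenders on the stretched cardinals $j(\kappa_\beta)$; in particular $j$ sends the generic Prikry sequence $\la\tau_\beta\mid\beta<\eta\ra$ of $G$ to $\la j(\tau_\beta)\mid\beta<\eta\ra$, which by continuity of $j$ at $\kappa$ is cofinal in $j(\kappa)$ and is a legitimate Prikry sequence for $j(\po)$ compatible with $j[G]$. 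One then builds $G^*$ with this sequence as its stem, choosing an $M[G]$-generic for the residual ``top'' forcing that adjoins the coordinate functions whose domains $j$ has stretched past $\lambda$, and diagonalizing its dense sets using the Prikry property, the closure of $M$, and the indestructibility secured by $\ro$; coherence of $\la E_\alpha\ra$ is what makes the images of the $G$-conditions amalgamate into conditions of $j(\po)$. I expect this step --- together with verifying that $G^*$ is genuinely $M$-generic and that the residual forcing \emph{preserves stationary subsets of $\kappa^+$} (its ``low'' coordinates have a chain condition below $\kappa^+$, and its ``high'' coordinates, living above $j(\sigma)>\kappa^+$, are highly closed) --- to be the principal obstacle, and the reason one wants Gitik's overlapping-extender machinery together with the interleaved supercompacts.

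Granting the lift, each $S_i\in M[G]$ stays stationary in $\kappa^+$ in $M[G^*]$, so each $j(S_i)$ is stationary in $j(\kappa^+)$ there, and I claim $\delta:=\sup j[\kappa^+]<j(\kappa^+)$ is a simultaneous reflection point for $\la j(S_i)\mid i<\theta\ra$ in $M[G^*]$. Indeed, for a club $C\in M[G^*]$ of $\delta$, let $\bar A$ be the closure in $\kappa^+$ of $A:=\{\alpha<\kappa^+\mid C\text{ is cofinal in }\sup j[\alpha]\}$; then $\bar A$ is club in $\kappa^+$, lies in $M[G^*]$ (as $j\uhr\kappa^+$ agrees with the original embedding $V\to M$, hence belongs to $M$ by ${}^{\lambda}M\subseteq M$), and for $\beta\in\bar A$ of cofinality $<\sigma$ one has $\beta\in A$ and, by continuity of $j$ at $\beta$, $j(\beta)\in C$. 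Choosing $\beta\in\bar A\cap S_i$ --- possible since $\bar A$ is club and $S_i$ is stationary in $M[G^*]$, and then $\cf\beta=\mu_i<\sigma$ --- gives $j(\beta)\in C\cap j(S_i)\cap\delta$. Since the same $\delta$ works for every $i<\theta$, elementarity of $j$ yields a common reflection point below $\kappa^+$ for $\la S_i\mid i<\theta\ra$ in $V[G]$, contradicting the choice of $p_0$. Finally, $2^\kappa=\lambda$ and preservation of cardinals and cofinalities are read directly off the properties of $\po$ and $\ro$.
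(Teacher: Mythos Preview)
Your approach diverges from the paper's, and the gap is exactly where you flag it: lifting $j$ through $\po$. The obstruction is concrete. On coordinates $\beta<\alpha^*$ the Cohen parts $f_\beta^p$ have size $\le\kappa_\beta<\sigma$, so $j(f_\beta^p)=j[f_\beta^p]$; amalgamating over all $p\in G$ yields a function with domain of size $\lambda$, while conditions of $j(\po)$ at coordinate $\beta$ must still have size $\le j(\kappa_\beta)=\kappa_\beta<\lambda$ --- there is no master condition on the low block. Nor can you diagonalize: $j(\po)$ has on the order of $j(\lambda)$ many dense sets in $M$, far beyond what ${}^{\lambda}M\subseteq M$ affords. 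Your asserted factoring of $\po$ into a $<\kappa_\alpha$-part and a $\ge\kappa_\alpha$-part is not a feature of Gitik's overlapping-extender forcing: for $\beta$ below the least non-pure coordinate, $\dom(f_\beta)\subseteq\lambda$ with no bound below $\lambda$, so the low block genuinely has size $\lambda$ and cannot be treated as small relative to $\sigma$. Coherence relates the $E_\alpha$ to one another, not to your supercompact $j$, and does not help amalgamate $j[G]$. The further claim that the residual forcing preserves stationary subsets of $\kappa^+$ is likewise unsupported.

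The paper circumvents this with a different architecture. Rather than lift a supercompact embedding through a $V$-generic, it forms the length-$\eta$ iterated ultrapower $j_\eta:V\to M_\eta$ by the extenders $E_\alpha$ themselves, and in a Cohen extension $V[\vec H]$ builds an $M_\eta$-generic $G^*\subseteq j_\eta(\po)$ directly from images $j_\eta(p)$ together with canonical one-point extensions $\nu^{p^\alpha}_{\alpha,\eta}=(j_{\alpha,\eta}\uhr d_\alpha^\alpha)^{-1}$. Reflection is then proved in $M_\eta[G^*]$: the stationary sets are pulled back along $j_{\alpha,\eta}$ to intermediate models $M_\alpha[G^*_\alpha\times\vec H^\alpha_{\ge\alpha}]$, where the forcing producing $G^*_\alpha$ has chain condition below the image $j_\alpha(\theta_\alpha)$ of the supercompact and the high part is highly closed, so the supercompact embedding lifts there by the standard small-forcing and closed-forcing arguments; if every pullback were nonstationary, the witnessing clubs are pushed forward to $M_\eta[G^*]$ and intersected to contradict stationarity of one of the original sets. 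The interleaved supercompacts are thus used only inside the iterated-ultrapower models, precisely where the size obstruction you encountered has already been dissolved by the iteration.
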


We have the following improvement of Gitik's result about $I[\kappa^+]$.

\begin{theorem} \label{mainthm2} Let $\langle \kappa_\alpha \mid \alpha < \eta
\rangle$ be as in Theorem \ref{mainthm} with the exception that we only assume
there is a single supercompact cardinal $\theta<\kappa_0$.  There is a cardinal
and cofinality preserving extension in which setting $\kappa =
\sup_{\alpha<\eta}\kappa_\alpha$, we have that $\kappa$ is strong limit,
$2^\kappa =\lambda$ and there is a scale of length $\kappa^+$ such that the set
of nongood points of cofinality less than $\theta$ is stationary. \end{theorem}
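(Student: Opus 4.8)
The plan is to run the argument over the same forcing $\mathbb P$ used to prove Theorem~\ref{mainthm}, built by Gitik's method of \cite{Gitik-new} from the Mitchell order increasing coherent sequence $\langle E_\alpha\mid\alpha<\eta\rangle$. Since we are no longer after stationary reflection, the supercompact cardinals cofinal in $\kappa$ are not needed, and the single supercompact $\theta<\kappa_0$ takes over the role played by the preparation in \cite{HU}. From the analysis of $\mathbb P$ already carried out for Theorem~\ref{mainthm} I would import that $\mathbb P$ preserves cardinals and cofinalities, that in $V[G]$ the cardinal $\kappa=\sup_\alpha\kappa_\alpha$ is strong limit of cofinality $\eta$, and that $2^\kappa=\lambda$. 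It then remains to produce, in $V[G]$, a scale $\vec f=\langle f_\xi\mid\xi<\kappa^+\rangle$ of length $\kappa^+$ on a product $\prod_{\alpha<\eta}\rho_\alpha$ of regular cardinals cofinal in $\kappa$ --- one exists because $\kappa$ is strong limit singular, and I would take the scale naturally read off the extender generic $G$, passing to a $\kappa^+$-generated cofinal subfamily of the product if the full extender scale is longer than $\kappa^+$ --- together with a stationary set of $\delta<\kappa^+$ of cofinality $<\theta$ that are not good for $\vec f$. Its coherence with the generic data is exactly what drives the rest of the argument.

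To locate the nongood points, fix a regular $\mu<\theta$ and an arbitrary club $C\subseteq\kappa^+$ of $V[G]$; it suffices to produce $\delta\in C$ of cofinality $\mu$ that is not good for $\vec f$. Using the supercompactness of $\theta$, I would build --- by a suitable iterated ultrapower construction in the spirit of \cite{HU}, derived from normal fine measures witnessing the supercompactness of $\theta$ --- an elementary $j\colon V\to M$ with $\crit(j)=\theta$, with $M$ closed under enough sequences to transfer $G$ but deliberately not under $\kappa^+$-sequences, and arranged so that the image-sup point $\delta_\mu:=\sup j[\kappa^+]$ has cofinality $\mu$ in $M$. Since $\crit(j)=\theta<\kappa_0$, $j$ moves the entire extender sequence and $j(\mathbb P)$ is Gitik's forcing applied to $j(\langle E_\alpha\rangle)$ over $M$; the first technical step is to transfer $G$ to an $M$-generic $H\supseteq j[G]$ for $j(\mathbb P)$ and to lift $j$ to $\hat\jmath\colon V[G]\to M[H]$, preserving $\cf(\delta_\mu)=\mu$. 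Granting the lift, one shows that $\delta_\mu$ --- an ordinal that is a limit of the range of $\hat\jmath$ but lies outside it, and which is a limit of $j[C]$ so that $\delta_\mu\in\hat\jmath(C)$ --- is not good for $\hat\jmath(\vec f)$: a cofinal $A\subseteq\delta_\mu$ with a uniform threshold $\beta_0$ witnessing goodness would, by the closure of $M$ and the agreement between the generic points of $H$ and the $j(\rho_\alpha)$'s, reflect to a configuration that genericity of $G$ forbids --- this is the Gitik--Sharon, respectively Cummings--Foreman--Magidor, style computation showing that tightness of the generic functions forces continuity points of the embedding to be nongood. Hence $M[H]\models$ ``there is a nongood $\delta\in\hat\jmath(C)$ for $\hat\jmath(\vec f)$ of cofinality $\mu$'', and since $\mu=\hat\jmath(\mu)$ elementarity pulls this back to ``there is a nongood $\delta\in C$ for $\vec f$ of cofinality $\mu$''. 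As $C$ and $\mu<\theta$ were arbitrary, the nongood points of $\vec f$ of cofinality $<\theta$ are stationary, as desired.

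The main obstacle is the transfer of $G$ to an $M$-generic for $j(\mathbb P)$. Since $\mathbb P$ lives entirely above $\crit(j)=\theta$ this is not a routine master-condition argument, and one must use hypothesis (3) --- the Mitchell order increasing coherent structure of $\langle E_\alpha\mid\alpha<\eta\rangle$ --- together with whatever closure $M$ retains to assemble $H$ from $G$ coordinate by coordinate, in the manner in which the preparation is handled for Theorem~\ref{mainthm}, all the while keeping $\vec f$ and $\hat\jmath(\vec f)$ coherent enough for the goodness computation. The second delicate point, and the one that genuinely needs a supercompact below $\kappa_0$ rather than merely a long extender there --- this is what separates the statement from the high-cofinality result of \cite{gitikap} --- is to have $M$ closed enough to carry out the transfer and yet not so closed that $\delta_\mu$ acquires cofinality $\kappa^+$: reconciling these is why one works with an iterated ultrapower rather than a single supercompact ultrapower. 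As in Theorem~\ref{mainthm}, the argument can be recast so that one never literally forces to lift an embedding, but instead reads the relevant structure --- in particular the scale $\vec f$ and its nongood points --- off iterated ultrapowers of $V$, first by the extenders $E_\alpha$ and then by the supercompactness measure of $\theta$.
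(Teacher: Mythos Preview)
Your proposal identifies the right shape of the argument --- use the supercompact $\theta$ to locate a nongood point in the image of a club --- but the execution has a genuine gap, and the paper's proof avoids exactly the obstacle you flag as unresolved.

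The problem is the lift of the supercompact embedding through $\mathbb{P}$. You take $j\colon V\to M$ with $\crit(j)=\theta<\kappa_0$, so $j$ moves the entire extender sequence and $j(\mathbb{P})$ is Gitik's forcing over the sequence $\langle j(E_\alpha)\mid\alpha<\eta\rangle$, a forcing living entirely above $j(\kappa_0)$. You need $M$ closed enough to build an $M$-generic $H\supseteq j[G]$ for $j(\mathbb{P})$, yet simultaneously \emph{not} closed under $\kappa^+$-sequences so that $\sup j[\kappa^+]$ has small cofinality. These requirements pull in opposite directions, and your final paragraph acknowledges the tension without resolving it. Appealing to the Mitchell-increasing coherence of the $E_\alpha$ does not help here: that hypothesis governs how the extender ultrapowers commute with one another, not how a supercompactness embedding with critical point below all of them interacts with $\mathbb{P}$.

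The paper sidesteps this entirely by never lifting a supercompactness embedding through $\mathbb{P}$. It works in the model $M_\eta[G^*]$ already built for Theorem~\ref{mainthm}: the iterated ultrapower $M_\eta$ of $V$ by the extenders $E_\alpha$, with the generic $G^*$ manufactured from a $V$-generic $\vec{H}$ for the product Cohen forcing $\vec{\A}$. The scale is simply $j_\eta(\vec f)$ for a ground-model scale $\vec f$ in $\prod_\alpha\kappa_\alpha^+$; one checks it is still a scale in $M_\eta[G^*]$. The key transfer lemma goes \emph{downward}: if $j_\eta(\delta)$ is good for $j_\eta(\vec f)$ in $M_\eta[G^*]$, then $\delta$ is good for $\vec f$ in $V[\vec H]$ (this uses only that $j_\eta$ is continuous at $\delta$ and that a witnessing set can be thinned into $j_\eta``\delta$). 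Now the supercompact is invoked only in $V[\vec H]$, a $\kappa_0^+$-directed closed extension where indestructibility applies directly: a single $\bar\kappa_\eta^+$-supercompactness embedding $k$ shows that the set $S$ of nongood points of $\vec f$ of cofinality below $\theta$ is stationary in $V[\vec H]$, since $\sup k``\bar\kappa_\eta^+$ is nongood for $k(\vec f)$ by the standard computation. Finally, a hypothetical club $D\subseteq j_\eta(\bar\kappa_\eta^+)$ of good points in $M_\eta[G^*]$ pulls back to a $<\kappa_0$-club $C=\{\delta\mid j_\eta(\delta)\in D\}$ of good points in $V[\vec H]$, contradicting the stationarity of $S$. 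No supercompactness embedding is ever extended across Gitik's forcing.
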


After a suitable preparation, the extension is obtained using the same forcing
as in Theorem \ref{mainthm}.  Standard arguments show that in this model, the
set of nongood points is stationary in cofinalities that are arbitrarily high
below $\theta$.  Further, the same argument shows that we can take $\theta$
between $\sup_{\alpha<\beta}\kappa_\alpha$ and $\kappa_\beta$ and reach the same
conclusion.

 These two results continue a long line of research about the
interaction between the failure of the singular cardinal hypothesis and weak
square-like principles, \cite{gitiksharon, neeman, sinapova3,sinapova1,
sinapova2,sinapovaunger}.

We do not know whether these results can be also obtained at small singular cardinals of uncountable cofinalites. For concreteness we suggest the following question. 
\begin{question}
 Is it consistent that $\SCH$ fails $\aleph_{\omega_1}$ and every stationary subset of $\aleph_{\omega_1+1}$ reflects?
\end{question}

We also give another model for stationary reflection at $\nu^+$ where $\nu$ is a
singular cardinal of cofinality $\omega$ where the singular cardinal hypothesis
fails. This construction replaces the supercompactness assumption in Sharon's result and Theorem \ref{mainthm} with the weaker one of subcompactness together with hypermeasurability.

\begin{theorem}\label{mainthm3} Suppose that $\kappa$ is
$\kappa^+$-$\Pi_1^1$-subcompact and carries a $(\kappa,\kappa^{++})$-extender.
There is a forcing extension in which $\kappa$ is singular strong limit of
cofinality $\omega$, $2^\kappa=\kappa^{++}$ and every finite collection of
stationary subsets of $\kappa^+$ reflects simultaneously. \end{theorem}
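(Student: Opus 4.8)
The plan is to combine, following \cite{HU} and Sharon, an extender-based Prikry-type forcing that makes $\kappa$ singular of cofinality $\omega$ with $2^\kappa=\kappa^{++}$, together with a reflection argument that extracts stationary reflection at $\kappa^+$ from the $\kappa^+$-$\Pi_1^1$-subcompactness of $\kappa$ in place of supercompactness. First I would run a reverse-Easton preparation $\mathbb{P}_{\mathrm{prep}}$ below $\kappa$ that forces $\GCH$ on a tail below $\kappa$, fixes a guiding (Laver-type) function on $\kappa$, and makes the $(\kappa,\kappa^{++})$-extender $E$ and the $\kappa^+$-$\Pi_1^1$-subcompactness of $\kappa$ indestructible under the main forcing, without disturbing $H(\kappa^+)$ in any essential way. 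The main forcing $\mathbb{P}$ is then a Gitik--Sharon style extender-based Prikry forcing built from $E$ and the guiding function: it adds a cofinal $\omega$-sequence to $\kappa$, adds $\kappa^{++}$ new subsets of $\kappa$, and --- unlike the plain extender-based Prikry forcing --- arranges that the approachability property fails at $\kappa$, which is what makes reflection possible. As usual $\mathbb{P}$ has the Prikry property, a $\kappa^{++}$-chain condition, and a $\kappa$-closed direct extension order; from these, with the standard analysis of extender-based forcing, one gets that all cardinals and cofinalities are preserved, that $\mathbb{P}$ adds no bounded subsets of $\kappa$ so that $\kappa$ stays strong limit, and that $2^\kappa=\kappa^{++}$ in the extension $V[G]$.

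The content of the theorem is finite stationary reflection at $\kappa^+$. Fix $\mathbb{P}$-names $\dot S_1,\dots,\dot S_n$ and a condition $p\in\mathbb{P}$ forcing each $\dot S_i$ to be a stationary subset of $\kappa^+$. Splitting each $\dot S_i$ into the pieces on which it concentrates on a single cofinality, I may assume $p$ forces each $\dot S_i$ to concentrate on ordinals of cofinality $\mu_i$; as $\kappa$ is singular of cofinality $\omega$ in $V[G]$ we have $\mu_i<\kappa$ for every $i$, and I set $\mu^\ast=\max_i\mu_i<\kappa$. The key new ingredient --- replacing Sharon's use of supercompactness --- is that the $\kappa^+$-$\Pi_1^1$-subcompactness of $\kappa$, used together with the extender $E$, should yield a reflecting embedding at the level of $H(\kappa^{++})$, which is where both the names $\dot S_i$ and (a dense subset of) $\mathbb{P}$ --- of size $\kappa^{++}$, this being what makes $2^\kappa$ large --- actually live. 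Concretely, one codes $p$, well-chosen names for the $\dot S_i$, and $\mathbb{P}$ into a single $A\subseteq H(\kappa^{++})$ and obtains $\bar\kappa<\kappa$ above $\mu^\ast$, $\bar A\subseteq H(\bar\kappa^{++})$, and an elementary $j\colon(H(\bar\kappa^{++}),\in,\bar A)\to(H(\kappa^{++}),\in,A)$ with $\crit(j)=\bar\kappa$ and $j(\bar\kappa)=\kappa$; since the assertion ``$p$ forces each $\dot S_i$ stationary'' --- unwound as ``for every name $\dot C$ for a club in $\kappa^+$ and every $q\le p$ there are $r\le q$ and $\xi$ with $r\Vdash\xi\in\dot C\cap\dot S_i$'' --- is $\Pi_1^1$ over the structure, its quantifier over $\dot C$ being second-order, it reflects: $\bar A$ codes a forcing $\bar{\mathbb{P}}$, a condition $\bar p$, and names $\bar S_i$ having the same property.

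Next I would lift $j$ through the forcing. Below a $\mathbb{P}$-generic $G$ with $p\in G$ one shows that $\bar{\mathbb{P}}$ is a small factor of $\mathbb{P}$, that $G$ induces a $\bar{\mathbb{P}}$-generic $\bar G\ni\bar p$ over the source structure, and that $j$ extends to $\bar j\colon H(\bar\kappa^{++})[\bar G]\to H(\kappa^{++})[G]$; here the guiding generics for the images under the one-step ultrapower maps coming from $E$ must be constructed. Granting this, put $\delta:=\sup j[\bar\kappa^+]$, an ordinal below $\kappa^+$. In $H(\bar\kappa^{++})[\bar G]$ each $\bar S_i$ is stationary in $\bar\kappa^+$ and concentrates on ordinals of cofinality $\bar\mu_i<\bar\kappa$; since $\crit(\bar j)=\bar\kappa$, the map $\bar j$ is continuous at every ordinal of cofinality $<\bar\kappa$, so $j[\bar S_i]$ is stationary in $\delta$. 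As $j[\bar S_i]=S_i\cap j[\bar\kappa^+]\subseteq S_i\cap\delta$ in $V[G]$, each $S_i\cap\delta$ is stationary in $\delta$, i.e.\ $\{S_1,\dots,S_n\}$ reflects simultaneously at $\delta$.

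The main obstacle lies entirely in the reflection half. First and foremost, one must actually obtain the reflecting embedding at the $H(\kappa^{++})$ level from the stated hypotheses: $\kappa^+$-$\Pi_1^1$-subcompactness by itself reflects only $H(\kappa^+)$-data, whereas any forcing that makes $2^\kappa$ large must add new subsets of $\kappa^+$, so the names $\dot S_i$ genuinely leave $H(\kappa^+)$; bridging this gap with the $(\kappa,\kappa^{++})$-extender is exactly the point at which Sharon invoked full supercompactness, and I would expect this amplification to be the technical core of the theorem. Secondly, the lift of $j$ through the extender-based Prikry forcing --- identifying the right small factor $\bar{\mathbb{P}}$ and building mutually coherent guiding generics for the ultrapower maps from $E$ --- is the usual delicate part of such arguments. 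The restriction to \emph{finite} families is not incidental: a single reflecting embedding, with one $\bar\kappa>\mu^\ast$, can absorb only finitely many cofinalities $\mu_i$. An alternative packaging, closer to \cite{HU}, would represent the $\mathbb{P}$-generic via a length-$\omega$ iterated ultrapower of $V$ and run the $\Pi_1^1$-reflection along the iteration; I would expect either route to face the same two difficulties.
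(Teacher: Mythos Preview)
Your proposal has a genuine gap, and you have correctly identified it yourself: you need a reflecting embedding at the $H(\kappa^{++})$ level, but $\kappa^+$-$\Pi_1^1$-subcompactness only gives embeddings $j\colon \langle H(\rho^+),\in,B\rangle \to \langle H(\kappa^+),\in,A\rangle$. You hope the $(\kappa,\kappa^{++})$-extender will ``amplify'' this to $H(\kappa^{++})$, but you give no mechanism for doing so, and none is known. The paper does \emph{not} solve this problem; it avoids it entirely, and the avoidance is the content of the argument.

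The paper's route is the iterated-ultrapower packaging you mention in your last paragraph, but it does \emph{not} face the same difficulty. The key observation is that Merimovich's extender-based Prikry forcing $\mathbb{P}_E$ has a dense Cohen suborder $\mathbb{P}_E^*\cong\Add(\kappa^+,\kappa^{++})$, and $\kappa^+$-$\Pi_1^1$-subcompactness --- hence finite simultaneous reflection for stationary subsets of $S^{\kappa^+}_{<\kappa}$ --- is preserved by $\Add(\kappa^+,\kappa^{++})$ (this is Lemma~42 of \cite{HU}, applied after a preparatory Easton iteration). One iterates $E$ to get $j_\omega\colon V\to M_\omega$ and, following Merimovich, builds a $j_\omega(\mathbb{P}_E)$-generic $G_\omega$ over $M_\omega$ out of $\Add(j_n(\kappa^+),j_n(\kappa^{++}))$-generics $G_n$ over the finite iterates $M_n$ (via Woodin surgery), together with an auxiliary club-shooting generic $\mathcal{H}$ killing the ground-model cofinality-$\kappa$ points. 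The crucial structural fact is $M_\omega[G_\omega][\mathcal{H}]=\bigcap_n M_n[G_n][\mathcal{H}_n]$, where each $M_n[G_n][\mathcal{H}_n]$ is a $j_n(\kappa^+)$-closed (Cohen-type) extension of $M_n$ in which reflection at $j_n(\kappa^+)$ for small-cofinality sets holds. Given finitely many stationary $S_i\subseteq j_\omega(\kappa^+)$ in $M_\omega[G_\omega][\mathcal{H}]$, the added club lets one assume they concentrate on cofinalities below $j_\omega(\kappa)$; one then pulls back to $T^i_n\subseteq j_n(\kappa^+)$ in $M_n[G_n][\mathcal{H}_n]$, and either some $n$ has all $T^i_n$ stationary --- whence reflection in the Cohen extension $M_n[G_n][\mathcal{H}_n]$ pushes forward to $M_\omega[G_\omega][\mathcal{H}]$ --- or one collects disjoint clubs $C_n$ and intersects their images in $M_\omega[G_\omega][\mathcal{H}]$ to contradict stationarity of some $S_i$.

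The second genuine technical point you are missing is how to move the club names $\dot C_n$, which are $j_n(\mathbb{P}_E^*)$-names (Cohen names), to $j_n(\mathbb{P}_E)$-names whose $j_{n,\omega}$-images can be interpreted by $G_\omega$. This is handled by a new device: one says $f\in\mathbb{P}_E^*$ \emph{stably forces} $\check\alpha\in\dot C$ if every ${<}\kappa$-sized alteration of $f$ forces it, and shows that the stable part $\dot C^s$ is still forced to be a club and is insensitive to the non-Cohen (Prikry) moves of $\mathbb{P}_E$. Thus the extender is used only to iterate and to build $G_\omega$; it is never combined with subcompactness to produce an $H(\kappa^{++})$-level reflecting embedding. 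Your direct approach of coding $\mathbb{P}$ and the $\dot S_i$ into $A\subseteq H(\kappa^{++})$ and reflecting cannot be made to work from the stated hypotheses.
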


In Theorem \ref{thm:down-to-aleph-omega}, we will also show how to collapse cardinals and make $\kappa$ in Theorem \ref{mainthm3} to be $\aleph_{\omega}$.\footnote{This theorem did not appear in the original version of this paper.} 

The construction and proof follows the lines of the second and third authors`
paper \cite{HU}, and the work of Merimovich \cite{Merimovich} on generating
generics for extender based forcing over iterated ultrapowers.  The large
cardinal assumption in the theorem is the natural combination of the assumption
from \cite{HU} and an assumption sufficient to get the failure of $\SCH$ by
extender based forcing.  Unfortunately, we are unable to adapt the argument from
the previous theorem to a singular cardinal of uncountable cofinality.  We ask

\begin{question}
 It is possible to obtain the result of Theorem \ref{mainthm} without any supercompactness assumptions? Namely, is it possible to start with a single partially supercompact cardinal $\kappa$ and force $\kappa$ to be a strong limit singular of uncountable cofinality such that every stationary subset of $\kappa^+$ reflects? Is it possible to obtain that together with the failure of $\SCH$ at $\kappa$?
\end{question}


The paper is organized as follows.  In Section \ref{gitik}, we define Gitik's
forcing from \cite{Gitik-new}, which will be used in our main theorem.  In
Section \ref{unctble}, we prove that in mild generic extensions of $V$ we can
find a generic for Gitik's forcing over a suitable iterated ultrapower.  In
Section \ref{reflection}, we give the proof of the main theorem by arguing that
stationary reflection holds in the generic extension of the iterated ultrapower
constructed in the previous section. In Section \ref{badscale}, we prove Theorem
\ref{mainthm2}.  In Section \ref{ctble}, we give the proof of Theorem
\ref{mainthm3} and its variation for $\aleph_\omega$.

\section{Gitik's forcing} \label{gitik}
In this section we give a presentation of Gitik's forcing \cite{Gitik-new} for
blowing up the power of a singular cardinal with a Mitchell order increasing
sequence of extenders.  

Let us start with the following definitions:
\begin{definition}
Let $E_0, E_1$ be $(\kappa_0, \lambda_0)$ and $(\kappa_1, \lambda_1)$-extenders respectively. We say that $E_0$ is less than $E_1$ in the Mitchell order, or $E_0 \trianglelefteq E_1$, if $E_0 \in \Ult(V, E_1)$. We say that $E_0$ coheres with $E_1$ if $j_{E_1}(E_0) \restriction \lambda_0 = E_0$.   
\end{definition}

The existence of a long Mitchell increasing and pairwise coherent sequence of extenders $E_i$, where $E_i$ is a $(\kappa_i, \lambda)$-extender, follows from the existence of superstrong cardinal or even weaker large cardinal axioms.

Before we begin with the definition of the forcing, let us show a few basic
facts about extenders and Mitchell order.  We recall the notion of width from
\cite{CummingsHandbook}.

\begin{definition} Let $k\colon M \to N$ be an elementary embedding between transitive
models of set theory and let $\mu$ be an ordinal.  We say the embedding $k$ has
width $\leq\mu$ if every element of $N$ is of the form $k(f)(a)$ for some $f \in
M$ and $a \in N$ such that $M \models \vert \dom(f)\vert \leq \mu$. \end{definition}

\begin{lemma}\label{lemma: cofinal generators}
Let $E$ be a $(\kappa,\lambda)$-extender and let $\leq_E$ be the Rudin-Keisler order of the extender $E$.
Let $k\colon V \to M$ be an elementary embedding with width $\leq \kappa$. Then 
the set $k `` \dom E$ is $\leq_{k(E)}$-cofinal in $\dom k(E)$. 
\end{lemma}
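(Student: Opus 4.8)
The plan is to rephrase the statement as one about a Skolem hull inside $\Ult(M,k(E))$ and then use the width hypothesis together with the commuting square relating $k$, $j_E$ and their images. Fix notation: let $j_E\colon V\to M_E:=\Ult(V,E)$ be the extender ultrapower, and let $i:=j^{M}_{k(E)}\colon M\to N:=\Ult(M,k(E))$. By elementarity $k(E)$ is a $(k(\kappa),k(\lambda))$-extender of $M$, so $\crit(i)=k(\kappa)$, and $\dom k(E)=k(\dom E)$ consists of coordinates lying below $k(\lambda)$. Recalling the usual description of the Rudin--Keisler order of an extender via its ultrapower embedding --- for a coordinate $\zeta$ and a finite set $b$ of coordinates of $k(E)$ one has $\zeta\le_{k(E)}b$ exactly when $\zeta\in\operatorname{hull}^{N}(\operatorname{ran}(i)\cup b)$ --- and using that $N$ is generated over $\operatorname{ran}(i)$ by the coordinates of $k(E)$, the lemma reduces to showing that every ordinal below $k(\lambda)$ lies in $H:=\operatorname{hull}^{N}(\operatorname{ran}(i)\cup k``\dom E)$, i.e. that $H=N$; the passage from this back to the cofinality statement is routine, using that $\dom E$ is suitably directed under $\le_E$ so that one may pass to a single upper bound.

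Next I would record the comparison of embeddings. One has $i=k(j_E)$; set $\ell:=k\restriction M_E\colon M_E\to N$, which is elementary with $k(M_E)=N$, agrees with $k$ on ordinals below $\lambda$, and satisfies $i\circ k=\ell\circ j_E$. Since the coordinates of $E$ generate $M_E$ over $\operatorname{ran}(j_E)$, that is $M_E=\operatorname{hull}^{M_E}(\operatorname{ran}(j_E)\cup\dom E)$, applying $\ell$ gives $\ell``M_E=\operatorname{hull}^{\ell``M_E}\big(\ell``\operatorname{ran}(j_E)\cup k``\dom E\big)\subseteq H$. So $H$ already contains $\ell``M_E$ (in particular $k``M_E$), and it contains every ordinal below $k(\kappa)=\crit(i)$, since those lie in $\operatorname{ran}(i)$.

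The width hypothesis then handles the ordinals below $k(\lambda)$ not yet visibly in $H$. Given $\zeta<k(\lambda)$, write $\zeta=k(f)(a)$ with $f\in V$, $|\dom f|^{V}\le\kappa$ and $a\in M$; after reindexing and truncating the range we may assume $\dom f=\kappa$ and $\operatorname{ran}(f)\subseteq\lambda$, so that $a\in k(\kappa)$, hence $a<\crit(i)$ and $a=i(a)\in\operatorname{ran}(i)\subseteq H$. The decisive observation is that $j_E\circ f=j_E(f)\restriction\kappa$ is an initial segment of $j_E(f)\in\operatorname{ran}(j_E)\subseteq M_E$, so $j_E\circ f\in M_E$ and $k(j_E\circ f)=\ell(j_E\circ f)\in\ell``M_E\subseteq H$; and since $a<k(\kappa)$ one computes $i(\zeta)=i(k(f))(a)=k(j_E\circ f)(a)$, placing $i(\zeta)$ in $H$. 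The remaining step --- the genuinely delicate one, and the main obstacle --- is the descent from $i(\zeta)$ back to $\zeta$. I would do this by induction on $\zeta<k(\lambda)$: if $\zeta\in k``\lambda$ there is nothing to prove; otherwise $\zeta\ge\crit(k)$, so one may arrange $\operatorname{ran}(f)\subseteq\zeta$ (replace $f$ by its truncation below $\zeta$, using $k(f)(a)=\zeta<k(\zeta)$), whence $\zeta\in k(\operatorname{ran}(f))$ with $\operatorname{ran}(f)$ a set of ordinals below $\zeta$ of size at most $\kappa$; unravelling the action of $k$ on such a set and feeding in the inductive hypothesis, the closure of $H$ under Skolem functions and under ordinal successor, and the cofinality of $k``\lambda$ in $k(\lambda)$, one concludes $\zeta\in H$. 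It is precisely this bookkeeping --- showing every "new" ordinal that $k$ creates below $k(\lambda)$ is reachable from $\operatorname{ran}(i)$ together with $k``\dom E$ --- that is the heart of the argument, and it is where the bound $\mathrm{width}(k)\le\kappa=\crit(E)$ is used in an essential way.
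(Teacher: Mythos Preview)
Your reformulation in terms of the hull $H=\operatorname{hull}^N(\operatorname{ran}(i)\cup k``\dom E)$ is correct, and the computation that $i(\zeta)\in H$ via the commuting square is fine (modulo notation: what you call $k(j_E\circ f)$ should really be $\ell(j_E(f))$). The genuine gap is the descent step. Your induction assumes all ordinals below $\zeta$ lie in $H$, writes $\zeta=k(f)(b)$ with $\operatorname{ran}(f)\subseteq\zeta$ and $b<k(\kappa)$, and then asserts that ``unravelling the action of $k$'' together with the inductive hypothesis yields $\zeta\in H$. But this does not follow: to evaluate $k(f)$ at $b$ inside $H$ you would need $k(f)$ itself (or at least $k(f)\restriction(b+1)$) to lie in $H$, and nothing you have written places it there. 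The inductive hypothesis only gives you the \emph{values} $k(f)(b')$ for $b'<b$, not the function, and $k(f)(b)$ is not determined by those values. The appeals to closure under Skolem functions, ordinal successor, and the cofinality of $k``\lambda$ in $k(\lambda)$ do not bridge this; none of them lets you recover a new value of $k(f)$ from old ones.

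The paper's proof avoids all of this with a three-line argument whose key input you relegated to a ``routine'' afterthought: the Rudin--Keisler order $\leq_E$ is $\kappa^+$-directed. Given $a\in\dom k(E)$, write $a=k(f)(b)$ with $f\colon\kappa\to\dom E$ (this is exactly the width hypothesis), pick in $V$ a single $a'\in\dom E$ that $\leq_E$-dominates the $\kappa$-sized set $\operatorname{ran}(f)$, and apply elementarity of $k$ to the first-order statement ``$\forall\xi\in\dom f\,(f(\xi)\leq_E a')$'' to get $k(f)(\xi)\leq_{k(E)}k(a')$ for all $\xi$, in particular for $\xi=b$. That is the entire proof. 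You invoked directedness only to amalgamate finitely many generators at the end, but the real work is using $\kappa^+$-directedness \emph{in $V$} to bound $\operatorname{ran}(f)$ before pushing through $k$; once you see this, there is no need for ultrapowers of $M$, Skolem hulls, or induction.
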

\begin{proof}
By \cite{Gitik-HB}, the Rudin-Keisler order $\leq_E$ is $\kappa^{+}$-directed.

Let $a \in \dom k(E)$. Then, by the definition of width, there is a function $f\colon \kappa \to \dom E$ and some generator $b$ such that $k(f)(b) = a$. Let $a'$ be a $\leq_E$ upper bound of the image of $f$. Then, clearly, $k(a')$ is $\leq_{k(E)}$ above $a$. 
\end{proof}
\begin{lemma}\label{lemma: commutative} 
Let $E_0$ be a $(\kappa_0, \lambda)$-extender and let $E_1$ be a $(\kappa_1,\lambda)$-extender, where $\kappa_0 \leq \kappa_1$. Let us assume that $E_0 \trianglelefteq E_1$. Then the following diagram commutes:

\[
\begin{tikzcd}
V\arrow{r}{E_0}\arrow{d}{E_1} & M_0\arrow{d}{j_{E_0}(E_1)} \\
M_1\arrow{r}{E_1} & N
\end{tikzcd}
\]
where each arrow represents the ultrapower map using the indicated extender.
\end{lemma}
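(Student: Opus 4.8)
The plan is to realise the lower horizontal map — the ultrapower of $M_1$ by $E_0$, which makes sense precisely because $E_0\in M_1$ by the Mitchell order hypothesis $E_0\trianglelefteq E_1$ — as the copy of $j_{E_0}$ pushed across $M_1=\Ult(V,E_1)$ via the standard shift (copying) construction, and then to check that this copy is in fact that ultrapower map. That $E_0$ is genuinely an extender inside $M_1$ is routine: the extender axioms are downward absolute, and $\Ult(M_1,E_0)$ is well-founded since an infinite descending sequence in it would be built from functions in $M_1\subseteq V$ and hence would already witness ill-foundedness of $\Ult(V,E_0)$.

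Set $N:=\Ult(M_0,j_{E_0}(E_1))$ — here $j_{E_0}(E_1)$ is an extender in $M_0$ by elementarity — and let $k\colon M_0\to N$ be the ultrapower map. Applying the copying lemma to $j_{E_0}\colon V\to M_0$ and the extender $E_1$ produces an elementary embedding $\tilde\jmath\colon M_1\to N$, determined by $j_{E_1}(f)(a)\mapsto k(j_{E_0}(f))(j_{E_0}``a)$ for $f\in V$ and $a\in[\lambda]^{<\omega}$, which satisfies $\tilde\jmath\circ j_{E_1}=k\circ j_{E_0}$. This already gives the commutativity of the square with $\tilde\jmath$ in the lower slot, so it remains to identify $\tilde\jmath$ with the ultrapower embedding $\ell\colon M_1\to\Ult(M_1,E_0)$; since transitive extender ultrapowers are unique, this will give $N=\Ult(M_1,E_0)$ and $\tilde\jmath=\ell$, completing the proof.

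To carry out the identification I would verify, for the pair $(N,\tilde\jmath)$, the two properties that characterise the $E_0$-ultrapower of $M_1$. First: $\crit(\tilde\jmath)=\kappa_0$ and, for every $a\in[\lambda]^{<\omega}$ and every $X\in M_1$ contained in the corresponding finite power of $\kappa_0$, $a\in\tilde\jmath(X)$ if and only if $X\in(E_0)_a$ as computed in $M_1$; this is obtained by unwinding the definition of $\tilde\jmath$ and tracking how $j_{E_1}$ acts on subsets of $[\kappa_0]^{<\omega}$. Second: every element of $N$ is of the form $\tilde\jmath(F)(a)$ with $F\in M_1$ and $a\in[\lambda]^{<\omega}$. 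This is where Lemma~\ref{lemma: cofinal generators} is used: $j_{E_0}$ has width $\le\kappa_0\le\kappa_1$, so the lemma applied to $j_{E_0}$ and $E_1$ shows that $j_{E_0}``\dom E_1$ is $\le_{j_{E_0}(E_1)}$-cofinal in $\dom j_{E_0}(E_1)$; absorbing the relevant projecting functions into the function part, every element of $N$ can then be represented using a seed of the form $j_{E_0}``a$ with $a\in[\lambda]^{<\omega}$. Combining this with the representation $M_0=\{j_{E_0}(g)(b):g\in V,\ b\in[\lambda]^{<\omega}\}$, with $\crit(k)=j_{E_0}(\kappa_1)\ge\lambda$ (so that $k$ fixes all ordinals below $\lambda$), and with the commutation $k\circ j_{E_0}=\tilde\jmath\circ j_{E_1}$, an arbitrary element of $N$ is rewritten in the required form.

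I expect the main obstacle to be the bookkeeping behind the first property together with the well-definedness of the copying map: one must reconcile the $(E_0)_a$-large sets as computed in $V$ with those computed in $M_1$, and control the action of $j_{E_1}$ on small sets. This is immediate when $\kappa_0<\kappa_1$, since then $j_{E_1}$ fixes $V_{\kappa_1}\ni E_0$ pointwise, but it requires a little more care in the boundary case $\kappa_0=\kappa_1$, where $j_{E_1}$ moves subsets of $\kappa_0$.
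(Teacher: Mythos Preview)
Your approach is correct and close in spirit to the paper's, though the packaging differs. The paper does not invoke the copying construction abstractly; instead it first establishes directly that $(j_{E_0})^V \restriction M_1 = (j_{E_0})^{M_1}$---arguing that $M_1$, having $E_0$ as a member, computes the extender-ultrapower order on pairs $(f,a)$ correctly and hence agrees with $V$ on the ordinals of the ultrapower---and then performs a two-sided element chase showing that $\Ult(M_1,E_0)$ and $\Ult(M_0,j_{E_0}(E_1))$ literally coincide, rewriting representatives back and forth. Your route through the copy map $\tilde\jmath$ followed by a characterisation of the $E_0$-ultrapower amounts to the same computation viewed from a different angle; in particular, both arguments invoke Lemma~\ref{lemma: cofinal generators} at exactly the same point, to replace an arbitrary seed of $j_{E_0}(E_1)$ by one of the form $j_{E_0}(a_1)$ with $a_1\in\dom E_1$. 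The paper's ordinal-agreement step handles the cases $\kappa_0<\kappa_1$ and $\kappa_0=\kappa_1$ uniformly and thereby avoids the case split you anticipate for your ``first property''. One small slip in your write-up: the parenthetical ``$V_{\kappa_1}\ni E_0$'' is false, since $E_0$ is indexed by $[\lambda]^{<\omega}$ with $\lambda>\kappa_1$; what is actually available when $\kappa_0<\kappa_1$ is that $M_1$ and $V$ share the same subsets of $[\kappa_0]^{<\omega}$, which suffices for your purpose.
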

\begin{proof}
First, since $E_0\in M_1$, all maps are internally defined and in particular, all models are well founded. 

Moreover, 
\[\left(j_{E_0}\right)^V \restriction M_1 = \left(j_{E_0}\right)^{M_1}.\]
In order to verify this, it is sufficient to show that those maps are the same on sets of ordinals. First, let us verify that they agree of the ordinals. Indeed, let us consider the class:
\[\{(f,a) \mid f \colon \kappa_0^{<\omega} \to \mathrm{Ord}\},\]
ordered using the extender $E_0$: \[(f_0,a_0) \leq (f_1, a_1) \iff j_{E_0}^V(f_0)(a_0) \leq j_{E_0}^V(f_1)(a_1).\]
Using the combinatorial definition of the extender ultrapower, we conclude that $M_1$ can compute this order correctly, and in particular, it computes correctly the ordertype of the elements below any constant function, or indeed below any element of the form $[f,a]$ that represent an ordinal. Indeed, as the ultrapower by $E_1$ is closed under $\kappa_0$-sequences, it contains the above class. Since $E_0 \trianglelefteq E_1$, $M_1$ contains the relevant measures and Rudin-Keisler projections. 

For an arbitrary set of ordinals, $y$, one can easily compute whether $[f,a] \in [c_y, \{\kappa\}]$ using the combinatorial information of the extender.

 Let us consider an element of $\Ult(M_1, E_0)$. By the definition, it has the form: $x = j_{E_0}(g)(a_0)$ where $g\colon \kappa_0^{<\omega} \to M_1$. Going backwards, we can find a function in $V$, $f$, and a generator $a_1$ such that for every $z$, $j_{E_1}(f)(z, a_1) = g(y)$ and therefore:
\[x = j_{E_0}\left( j_{E_1}(f)( - , a_1) \right)(a_0) = j_{j_{E_0}(E_1)} \big( j_{E_0} (f)\big)\left(a_0, j_{E_0}(a_1)\right).\]
This element in obviously in $\Ult(M_0, j_{E_0}(E_1))$. 

On the other hand, if $y \in \Ult(M_0, j_{E_0}(E_1))$ then there is some generator $a_1' \in \dom j_{E_0}(E_1)$ and a function $g$ such that $y = j_{j_{E_0}(E_1)}(g)(a_1')$. By Lemma \ref{lemma: cofinal generators}, we may assume that $a_1' = j_{E_0}(a_1)$. Let $f$ be a function in $V$ and $a_0$ be some generator such that $g = j_{E_0}(f)(a_0)$, then we have:
\[
\begin{matrix}
y & = & j_{j_{E_0}(E_1)}\left(j_{E_0}(f)(a_0, - )\right)(j_{E_0}(a_1)) \\
 & = & j_{j_{E_0}(E_1)}\left(j_{E_0}\left(f(-, a_1)\right)(a_0)\right) \\
 & = & j_{E_0}\big(j_{E_1}(f)( - , a_1)\big)(a_0),
\end{matrix}
\]
as wanted.
\end{proof}
Let $\langle \kappa_\alpha \mid \alpha < \eta \rangle$ 
be a sequence of cardinals as in Theorem \ref{mainthm}.  Following 
Merimovich (see for example \cite{Merimovich}), we can assume that the extenders
$E_\alpha$ are of the form $\langle E_\alpha(d) \mid d \in [\lambda]^{\kappa_\alpha}
\rangle$ where for $X \subseteq \{f\mid \dom f \subseteq d,\, \range f \subseteq \kappa_\alpha,\, |f| < \kappa_\alpha\}$, $X \in E_\alpha(d)$ if and only
if $\{ (j_{E_\alpha}(\xi),\xi) \mid \xi \in d \} \in j_{E_\alpha}(X)$.  

For $d\in [\lambda]^{\kappa_\alpha}$ with $\kappa_\alpha \in d$, it is
easy to see that the measure $E_\alpha(d)$ concentrates on a set of order
preserving functions $\nu$ with $\kappa \in \dom(\nu)$.  So we assume that every
measure one set mentioned below is of this form.  We also fix functions $\la
\ell_\alpha \mid \alpha < \eta\ra$ so that for every $\alpha <  \eta$,
$j_{E_\alpha}(\ell_\alpha)(\kappa_\alpha) = \lambda$.  The existence of such
functions can always be arranged by a simple preliminary forcing.

Using the set $d \in [\lambda]^{\kappa_\alpha}$ to index the extenders $E_\alpha$ has
the advantage that the projection maps from $E_\alpha(d')$ and $E_\alpha(d)$ for
$d \subseteq d'$ can be made very explicit.  The measure $E_\alpha(d')$
concentrates on partial functions from $d'$ to $\kappa_\alpha$ with domain smaller than
$\kappa_\alpha$.  Thus, the map $\nu \to \nu \restriction d$ is a projection from
$E_\alpha(d')$ to $E_\alpha(d)$.

For every two cardinals $\mu < \lambda$, let $\A(\lambda,\mu)$ be the
poset consisting of partial functions $f \colon \lambda \to \mu$ with $|f| \leq
\mu$ and $\mu \in \dom(f)$. Therefore $\A(\lambda,\mu)$ is isomorphic
to Cohen forcing for adding $\lambda$ many subsets of $\mu^+$.

We let $\mathbb{P}$ be Gitik's forcing from \cite{Gitik-new} defined from the
sequence of extenders $\langle E_\alpha \mid \alpha < \eta \rangle$.  We give a
compact description of the forcing.  A condition $p \in \mathbb{P}$ is a
sequence $\langle p_\alpha \mid \alpha < \eta \rangle$ such that there is a
finite set $s^p \subseteq \eta$ such that for each $\alpha < \eta$, $p_\alpha =
(f_\alpha,\lambda_\alpha)$ if $\alpha \in s^p$, and $p_\alpha =
(f_\alpha,A_\alpha)$ otherwise, and the following conditions hold.
\begin{enumerate}
\item $f_\alpha \in \A(\lambda_{\alpha^*},\kappa_\alpha)$ where $\alpha^*$ is
the next element of $s^p$ above $\alpha$ if it exists and $f_\alpha \in
\A(\lambda,\kappa_\alpha)$ otherwise.

\item For all $\alpha \in s^p$, $\lambda_\alpha$ is a cardinal and
$\sup_{\beta< \alpha}\kappa_\beta < \lambda_\alpha < \kappa_\alpha$.

\item For all $\alpha \in \eta \setminus s^p$, if $\alpha > \max(s^p)$, then
$A_\alpha \in E_\alpha(\dom(f_\alpha))$, otherwise if $\alpha^*$ is the least
element of $s^p$ above $\alpha$, then $A_\alpha \in E_\alpha(\dom(f_\alpha))$.

\item For $\alpha \notin s^p$, $f_\alpha^p(\kappa_\alpha)
=0$. \footnote{This gives a clean way to distinguish between Cohen functions associated to
members of $s^p$ and those which are not.}

\item The sequence $\langle \dom(f_\alpha) \mid \alpha < \eta \rangle$ is
increasing.
\end{enumerate}

We adopt the convention of adding a superscript $f_\alpha^p$, $A_\alpha^p$, etc.
to indicate that each component belongs to $p$.  When the value of
$\lambda_{\alpha^{*}}$ might behave non-trivially, we will add it as a third
coordinate to the pairs $p_\alpha$, where $\alpha \notin s^p$.  We call
$\eta\setminus s^p$ the \emph{pure part} of $p$ and $s^p$ the \emph{non-pure
part} of $p$.

We briefly sketch the notion of extension. A condition $p$ is a direct extension of $q$ if
$s^p = s^q$ and for all $\alpha$, $f_\alpha^p$
is stronger than $f_\alpha^q$ and $A_\alpha^p$ projects to a subset of $A_\alpha^q$ using the natural Rudin-Keisler projection from $E_\alpha(\dom f_\alpha^p)$ to $E_\alpha(\dom f_\alpha^q)$.

Let us describe now the one point extension.  Suppose that $\nu \in A_\beta^p$.
We let $q = p \frown \nu$ be the condition with $s^q = s^p \cup \{\beta\}$ and
the following.

\begin{enumerate}
\item $f_\beta^q = f_\beta^p \fr \nu$ is the overwriting of $f_\beta^p$ by
$\nu$, that is 
\[ (f_\beta^p\fr \nu)(\tau) = 
\begin{cases}
 \nu(\tau) \mbox{ if } \tau \in \dom(\nu)\\
 f^p_\beta(\tau) \mbox{ otherwise. }
\end{cases}
\]
\item $\lambda_\beta^q = \ell_\beta(\nu(\kappa_\beta))$.
\item For $\alpha \in [\max(s^p) \cap \beta,\beta)$, $f_\alpha^q = 
 f_\alpha^p \circ \nu^{-1}$ and $A_\alpha^q = \{ \xi \circ \nu^{-1}  \mid \xi \in
A_\alpha^p \}$ if applicable.
\end{enumerate}

The following analysis of dense open sets of $\po$ was
established in \cite{Gitik-new}.

\begin{lemma}\label{Lem-meetdense} For every condition $p \in \po$ and dense
open set $D \subseteq \po$, there are $p^* \geq^* p$ and a finite subset $\{
\alpha_0, \dots, \alpha_{m-1}\}$ of the pure part of $p$, such that for
every sequence $\vec{\nu} = \la \nu_{\alpha_0},\dots, \nu_{\alpha_{m-1}}\ra \in
\prod_{i< m} A^{p^*}_{\alpha_i}$, $p^* \fr \vec{\nu} \in D$.  \end{lemma}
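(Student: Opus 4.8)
The plan is to prove this Prikry-style density lemma by the standard argument for long-extender forcings, adapted to the fact that the set $s^p$ can be infinite and that the components interact through the $\lambda_{\alpha^*}$ bookkeeping. First I would reduce to the case where the conclusion is witnessed ``one coordinate at a time''. The key point is that the one-point extension $p \frown \nu$ only alters coordinates in the interval $[\max(s^p) \cap \beta, \beta]$, so if we think of adding points $\nu_{\alpha_0}, \dots, \nu_{\alpha_{m-1}}$ at finitely many pure coordinates $\alpha_0 < \dots < \alpha_{m-1}$, the effect of the full extension $p \frown \vec\nu$ is obtained by iterating the one-point extensions from the top down. So it suffices to handle the single-coordinate version: given $p$, a pure coordinate $\beta$, and a dense open $D$, find $p^* \geq^* p$ and $A^* \subseteq A^{p^*}_\beta$ in $E_\beta(\dom f^{p^*}_\beta)$ such that for every $\nu \in A^*$ either $p^* \frown \nu \in D$ or no direct extension of $p^* \frown \nu$ lies in $D$, and then recurse on the (finitely many) coordinates below $\beta$ and iterate. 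The finiteness of the list $\{\alpha_0, \dots, \alpha_{m-1}\}$ comes out of this recursion because each one-point extension enlarges $s$ by one element and only finitely many rounds are needed before the process stabilizes.

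The heart of the matter is the single-coordinate step, which uses the normality/ultrapower characterization of $E_\beta(d)$. Fix $d = \dom f^p_\beta$. Working in $\Ult(V, E_\beta)$, the condition $p$ ``names'' a candidate one-point extension indexed by the generator $\{(j_{E_\beta}(\xi), \xi) \mid \xi \in d\}$; by density of $D$ there is a direct extension of this candidate deciding membership in $j_{E_\beta}(D)$, and since $E_\beta$ has the relevant closure ($\kappa_\beta$-completeness on the pure coordinates together with the Cohen parts living in $\A(\lambda, \kappa_\beta)$ which is $\leq\kappa_\beta$-closed), this direct extension reflects to a measure-one set of $\nu$'s. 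This is exactly the mechanism behind Lemma~\ref{lemma: cofinal generators} and the explicit projection maps described above: the generators of $j_{E_\beta}(E_\gamma)$ for $\gamma$ below $\beta$ are controlled by $j_{E_\beta}$ of generators of $E_\gamma$, so the modified coordinates $f^q_\alpha = f^p_\alpha \circ \nu^{-1}$ and $A^q_\alpha = \{\xi \circ \nu^{-1} \mid \xi \in A^p_\alpha\}$ vary coherently as $\nu$ ranges over a measure-one set, and one can shrink to a single measure-one set on which the decision is uniform. The coherence hypothesis on $\langle E_\alpha \mid \alpha < \eta\rangle$ and Lemma~\ref{lemma: commutative} are what guarantee that shrinking at coordinate $\beta$ does not destroy the measure-one requirements at coordinates above, since $j_{E_\beta}(E_\gamma) \restriction \lambda = E_\gamma$ for the overlapping extenders.

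I would then assemble the global $p^*$ by a fusion-style argument over $\eta$: enumerate the pure coordinates, and at each stage apply the single-coordinate step to absorb the decisions, taking direct-extension lower bounds at limit stages. Because direct extension $\geq^*$ is $\kappa_0$-closed on the Cohen parts (each $\A(\lambda_{\alpha^*}, \kappa_\alpha)$ is $\leq\kappa_\alpha$-closed and $\kappa_\alpha \geq \kappa_0 > \eta$) and the measure-one sets can be intersected using completeness, this fusion converges to a legitimate condition $p^* \geq^* p$. Finally, once $p^*$ is fixed, a genuine condition $q \in D$ with $q \geq p^*$ exists by density; writing $q$ (up to a direct extension) as $p^* \frown \vec\nu$ for some finite $\vec\nu$ drawn from the stabilized measure-one sets, the homogeneity built into $p^*$ gives $p^* \frown \vec\nu' \in D$ for every $\vec\nu' \in \prod_{i<m} A^{p^*}_{\alpha_i}$, which is the conclusion.

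The main obstacle, as usual in long/overlapping extender forcing, is bookkeeping the interaction between the infinitely many coordinates during the fusion: one must verify that shrinking a measure-one set at coordinate $\beta$ (and the induced modifications $\xi \mapsto \xi \circ \nu^{-1}$ at coordinates just below $\beta$) is compatible with earlier commitments at other coordinates, and that the ``increasing domains'' requirement (5) and the $\lambda_{\alpha^*}$-indexing in (1) are preserved throughout. This is precisely where coherence of the extender sequence and the commuting-square Lemma~\ref{lemma: commutative} do the work, and where I expect the bulk of the technical care to go; the rest is a routine Prikry-property fusion.
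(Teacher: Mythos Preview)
The paper does not supply its own proof of this lemma; it is stated with the attribution ``The following analysis of dense open sets of $\po$ was established in \cite{Gitik-new}'' and no argument is given. So there is no in-paper proof to compare against, and the intended comparison collapses to whether your sketch matches the standard argument from Gitik's paper.

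On the substance of your sketch: you have the right global shape --- a fusion over the pure coordinates that makes the decision ``does $p^* \frown \vec{\nu}$ have a direct extension in $D$?'' uniform on measure-one sets, followed by a density step --- but two points are garbled. First, you write that ``the set $s^p$ can be infinite''; this is false. The definition of a condition requires $s^p$ to be a \emph{finite} subset of $\eta$; it is the pure part $\eta \setminus s^p$ that is (typically) infinite, of order type up to $\eta$. Second, and relatedly, your account of where the finiteness of $\{\alpha_0,\dots,\alpha_{m-1}\}$ comes from is backwards. It does not come from the fusion ``stabilizing after finitely many rounds'': the fusion genuinely runs through all $\eta$-many pure coordinates (and converges because $\geq^*$ is sufficiently closed and $\eta < \kappa_0$). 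The finiteness appears only at the last step: once $p^*$ is built, density of $D$ gives some $q \geq p^*$ in $D$, and since $s^q$ is finite, $q$ is (up to direct extension) $p^* \frown \vec{\nu}$ for a finite $\vec{\nu}$; the uniformity baked into $p^*$ then propagates membership in $D$ to all $\vec{\nu}$ drawn from the same measure-one sets. Your sentence ``recurse on the (finitely many) coordinates below $\beta$'' suggests you may be conflating the length of the fusion with the size of $s^q$; these are different, and the argument does not work if you try to terminate the fusion early.
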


For limit $\delta \leq \eta$, we define $\bar{\kappa}_\delta =
\sup_{\alpha<\delta}\kappa_\alpha$.  Gitik used the previous lemma to prove:

\begin{theorem} In the generic extension by $\po$, cardinals and cofinalities
are preserved and for every limit $\delta \leq \eta$, the singular cardinal
hypothesis fails at $\bar{\kappa}_\delta$. \end{theorem}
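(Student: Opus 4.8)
The plan is to verify three things in the generic extension by $\po$: cardinals and cofinalities are preserved, each $\bar\kappa_\delta$ remains a cardinal, and $2^{\bar\kappa_\delta} > \bar\kappa_\delta^+$ (in fact $= \lambda$ for $\delta = \eta$). The backbone of all three is the Prikry property together with the factorization of $\po$ into a ``pure'' (direct extension) part and a small ``non-pure'' part, exactly as encoded in Lemma \ref{Lem-meetdense}.

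First I would establish the Prikry property itself from Lemma \ref{Lem-meetdense}: given a statement $\sigma$ of the forcing language and a condition $p$, apply the lemma to the dense open set $D_\sigma$ of conditions deciding $\sigma$ to get $p^* \geq^* p$ and a finite $\{\alpha_0,\dots,\alpha_{m-1}\}$ from the pure part such that every one-point-extension sequence $\vec\nu \in \prod_{i<m} A^{p^*}_{\alpha_i}$ drives $p^*\fr\vec\nu$ into $D_\sigma$; then shrink the measure-one sets $A^{p^*}_{\alpha_i}$ by a Rowbottom/Fodor-style argument (using $\kappa_{\alpha_i}$-completeness of the $E_{\alpha_i}(d)$) so that the decision value is constant across $\prod_i A^{p^*}_{\alpha_i}$, and finally observe that $p^*$ already decides $\sigma$ since the one-point extensions below any fixed finite set of coordinates are dense below $p^*$. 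Next I would show that the direct extension order $\geq^*$ is sufficiently closed and factors nicely: below a condition $p$ with $\max(s^p) = \alpha$, the forcing splits as a ``lower part'' living in $V_{\kappa_\alpha}$-style (really a product of Cohen forcings $\A(\lambda_\beta,\kappa_\beta)$ for $\beta < \alpha$ together with the discrete choices of Prikry points already made) times an ``upper part'' whose direct extension order is $\bar\kappa_\alpha^+$-closed (in fact more). The lower part is small relative to the cardinals it could collapse, and the upper part adds no bounded subsets of $\kappa_\alpha$ by closure, so standard Prikry-forcing bookkeeping gives: no cardinal in $(\bar\kappa_\alpha, \kappa_\alpha]$ is collapsed, no cardinal is collapsed at all, and cofinalities of regular cardinals $\neq \kappa_\alpha$ (and $\neq$ the relevant $\lambda_\alpha$'s, which themselves get singularized in a controlled way only when chosen into $s^p$) are preserved; the key new singular cardinals are precisely the $\bar\kappa_\delta$ for limit $\delta$, each of cofinality $\cf(\delta)$ via the generic sequence of Prikry points $\langle \kappa_\alpha^{\mathrm{gen}} \mid \alpha < \delta\rangle$, where $\kappa_\alpha^{\mathrm{gen}} = \nu(\kappa_\alpha)$ for the generic $\nu$ chosen at coordinate $\alpha$.

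For the failure of $\SCH$ at $\bar\kappa_\delta$, I would fix a limit $\delta \leq \eta$ and exhibit $\lambda$ (when $\delta = \eta$), or $\sup_{\alpha<\delta}\lambda_?$ appropriately scaled, many distinct subsets of $\bar\kappa_\delta$ in the extension. The generic for the Cohen-coordinate $f_\alpha$ at each $\alpha < \delta$ is a total function from (an ordinal of size) $\lambda$ (or $\lambda_{\alpha^*}$) into $\kappa_\alpha$; piecing these together along the generic Prikry sequence and using the explicit one-point-extension formulas (the overwriting $f_\beta^p\fr\nu$ and the twisting $f_\alpha^p\circ\nu^{-1}$) one recovers, for each $\rho < \lambda$, a function $F_\rho$ on a cofinal $\omega$- or $\cf(\delta)$-sequence below $\bar\kappa_\delta$ coding a subset $X_\rho \subseteq \bar\kappa_\delta$, with $\rho \mapsto X_\rho$ injective by a density argument (distinct $\rho$ are separated by some condition forcing $f_\alpha(\rho) \neq f_\alpha(\rho')$ on a large measure-one set of Prikry points). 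Since $\bar\kappa_\delta$ is strong limit in the extension — bounded subsets are added only by the small lower part and the GCH-like behavior below the $\kappa_\alpha$'s survives — this yields $2^{\bar\kappa_\delta} \geq \lambda > \bar\kappa_\delta^+$, so $\SCH$ fails. The main obstacle, and where one must be most careful, is the bookkeeping for cardinal and cofinality preservation at the cardinals $\lambda_\alpha$ and at $\kappa_\alpha$ themselves: one has to check that the interaction between the many Cohen parts $\A(\lambda_{\alpha^*},\kappa_\alpha)$ — whose index $\lambda_{\alpha^*}$ itself depends on the generic Prikry point via $\ell_\beta$ — does not collapse cardinals in the intervals $[\bar\kappa_\alpha, \kappa_\alpha)$; this is handled by the coherence of the extender sequence (clause (3) of Theorem \ref{mainthm}) and the closure of $\geq^*$, but verifying it cleanly is the crux, and is exactly the content Gitik isolated in \cite{Gitik-new}.
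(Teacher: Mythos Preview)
The paper does not supply its own proof of this theorem; it is stated as Gitik's result and attributed to \cite{Gitik-new}, with Lemma~\ref{Lem-meetdense} quoted as the key ingredient. So there is no ``paper's proof'' to compare against beyond that citation, and your closing sentence already acknowledges this.

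That said, your sketch contains a real misunderstanding of what the forcing does. You write that ``the key new singular cardinals are precisely the $\bar\kappa_\delta$'' and that each acquires cofinality $\cf(\delta)$ ``via the generic sequence of Prikry points $\langle \kappa_\alpha^{\mathrm{gen}} \mid \alpha < \delta\rangle$''. This is wrong on both counts. The cardinals $\bar\kappa_\delta = \sup_{\alpha<\delta}\kappa_\alpha$ are already singular of cofinality $\cf(\delta)$ in $V$; nothing is singularized by $\po$. The generic does \emph{not} add a cofinal Prikry sequence through any $\kappa_\alpha$: at each coordinate $\alpha$ the non-pure part records a single ordinal $\lambda_\alpha = \ell_\alpha(\nu(\kappa_\alpha)) < \kappa_\alpha$, not an $\omega$-sequence, and each $\kappa_\alpha$ stays regular (indeed measurable). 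Your parenthetical ``cofinalities of regular cardinals $\neq \kappa_\alpha$ \dots\ are preserved'' and the remark about the $\lambda_\alpha$ ``getting singularized in a controlled way'' reflect the same confusion. The theorem asserts that \emph{all} cofinalities are preserved, and that is what the Prikry-property-plus-closure analysis actually yields here.

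The rest of your outline --- deriving the Prikry property from Lemma~\ref{Lem-meetdense}, using closure of $\geq^*$ above $\max(s^p)$ together with smallness of the lower part for cardinal preservation, and reading off $\lambda$ many distinct functions in $\prod_{\alpha<\eta}\kappa_\alpha$ from the generic Cohen parts to get $2^{\bar\kappa_\eta} \geq \lambda$ --- is the right shape and matches the analysis in \cite{Gitik-new}.
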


For use later, we define $\vec{\A}$ to be the full-support product
$\prod_{\alpha < \eta} \A(\lambda,\kappa_\alpha)$, and similarly, for each
$\beta < \eta$, $\vec{\A}_{\geq\beta} = \prod_{\beta \leq \alpha < \eta}
\A(\lambda,\kappa_\alpha)$. We aim to show that if $\vec{H} = \la H(\alpha) \mid
\alpha < \eta\ra$ is $\vec{\A}$ generic over $V$, then there is a generic for
the image of $\po$ in a suitable iterated ultrapower. 

\section{The iterated ultrapower $M_{\eta}$ and the generic filter
$G^*$}\label{unctble}

We consider the following iterated ultrapower \[\l M_\alpha,j_{\alpha,\beta} \mid \alpha \leq \beta \leq \eta \r\] by the extenders in $\vec{E}$. The iteration is defined by induction of $\alpha$. 
Let $M_0 = V$. For every $\alpha < \eta$, given that $j_{0,\alpha} \colon  M_0 \to M_\alpha$ has been defined, we take $E^\alpha_\alpha = j_{0,\alpha}(E_\alpha)$, 
and set $j_{\alpha,\alpha+1} \colon  M_\alpha \to M_{\alpha+1} \cong \Ult(M_\alpha,E_{\alpha}^\alpha)$. At limit stage $\delta \leq \eta$ we take $M_\delta$ to be the 
direct limit of the system $\l M_\alpha,j_{\alpha,\beta} \colon  \alpha \leq \beta  <\delta\r$, and $j_{\alpha, \delta}$ to be the limit maps.

For every $\beta \leq \eta$, we shall abbreviate and write $j_\beta$ for
$j_{0,\beta}$.  For a given $\beta \leq \eta$ we shall denote
$j_{\beta}(\kappa_\alpha), j_{\beta}(E_\alpha)$, $j_{\beta}(\lambda)$ by
$\kappa_\alpha^\beta$, $E_\alpha^\beta$, and $\lambda^\beta$ respectively.
Similarly, we will denote $j_{\beta}(x) = x^\beta$ for
objects $x \in V$ whose images along the iteration will be significant for our
construction. 

\begin{lemma}\label{Lem-itergens}
 Let $\beta \leq \eta$.   For every $x \in M_{\beta}$ there are $\beta_0< \beta_1 < \dots < \beta_{l-1}$ below $\beta$, 
 ordinals $\tau_0,\dots,\tau_{l-1}$ below $\lambda$, and $f  \colon \prod_{i < l} \kappa_{\beta_i} \to V$, such that 
 \[ x = j_{\beta}(f)\left(j_{\beta_0}(\tau_0), j_{\beta_1}(\tau_1), \dots, j_{\beta_{l-1}}(\tau_{l-1})\right).\]
\end{lemma}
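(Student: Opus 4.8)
The plan is to prove the statement by induction on $\beta \leq \eta$, tracking the generator structure of the iterated ultrapower. The base case $\beta = 0$ is trivial since $M_0 = V$ and $x = j_0(c_x)()$ for the constant function $c_x$. For the successor step, suppose the claim holds at $\beta$ and consider $x \in M_{\beta+1} = \Ult(M_\beta, E_\beta^\beta)$. By the combinatorial definition of the extender ultrapower, $x = j_{\beta,\beta+1}(g)(a)$ for some $g \in M_\beta$ with $g : (\kappa_\beta^\beta)^{<\omega} \to M_\beta$ and some generator $a$; since the measures $E_\beta(d)$ concentrate on order-preserving functions with domain $\kappa$ as arranged after Lemma \ref{lemma: commutative}, I may take $a$ to be (coded by) an ordinal below $\lambda^\beta = j_\beta(\lambda)$, i.e. $a = j_{\beta,\beta+1}(\mathrm{id})(\xi)$ in the appropriate indexing for some $\xi < \lambda^\beta$. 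Now apply the induction hypothesis to both $g$ and to $\xi$ as elements of $M_\beta$: write $g = j_\beta(G)(j_{\beta_0}(\tau_0),\dots,j_{\beta_{l-1}}(\tau_{l-1}))$ with all $\beta_i < \beta$ and $\tau_i < \lambda$, and similarly $\xi = j_\beta(h)(j_{\gamma_0}(\sigma_0),\dots)$ — but here I want $\xi$ itself to be of the form $j_\beta(\tau)$ for a single $\tau < \lambda$, which requires a small extra observation (see below). Assuming that, set $\beta_l = \beta$ and $\tau_l = \tau$, and combine using the fact that $j_{\beta,\beta+1} \circ j_\beta = j_{\beta+1}$ restricted appropriately, together with the commutativity of the iteration maps, to express $x$ as $j_{\beta+1}(F)$ applied to $j_{\beta_0}(\tau_0),\dots,j_{\beta_{l-1}}(\tau_{l-1}), j_{\beta_l}(\tau_l)$ for a suitable $F$ assembled from $G$ and the evaluation map.

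For the limit step $\delta \leq \eta$, every $x \in M_\delta$ is $j_{\beta,\delta}(\bar x)$ for some $\beta < \delta$ and $\bar x \in M_\beta$ since $M_\delta$ is the direct limit; apply the induction hypothesis to $\bar x$ in $M_\beta$ and then push the representation forward by $j_{\beta,\delta}$, using that $j_{\beta,\delta} \circ j_\beta = j_\delta$ and that $j_{\beta,\delta}$ fixes the relevant ordinals $j_{\beta_i}(\tau_i)$ below the critical points — more precisely, $j_{\beta_i}(\tau_i) < \kappa_{\beta_i}^{\beta_i+1} \le \kappa_\beta$ which is below $\crit(j_{\beta,\delta})$, so these generators are unmoved, and $j_{\beta,\delta}(j_\beta(f)) = j_\delta(f)$.

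The main obstacle is the successor step, specifically arranging that the single new generator introduced at stage $\beta$ can be written as $j_\beta(\tau)$ for an ordinal $\tau < \lambda$ rather than as a more complicated term. This is exactly where the Merimovich-style indexing of $E_\alpha$ by $d \in [\lambda]^\kappa$ — set up in the paragraph preceding $\mathbb{A}(\lambda,\kappa)$ — pays off: the generators of $E_\beta^\beta = j_\beta(E_\beta)$ are indexed by ordinals in $j_\beta(\lambda) = \lambda^\beta$, and by Lemma \ref{lemma: cofinal generators} applied to $k = j_\beta$ (which has width $\le \kappa$, in fact width $\le \bar\kappa_\beta < \kappa_\beta = \crit$ of the relevant part), the set $j_\beta `` \lambda$ is cofinal in the Rudin–Keisler order of $j_\beta(E_\beta)$, so any generator $a$ is below some $j_\beta(\tau)$ with $\tau < \lambda$; replacing $a$ by $j_\beta(\tau)$ and absorbing the projection into the function $g$, we may assume $a = j_\beta(\tau)$ outright. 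Once this normalization is in place, the bookkeeping needed to merge the old generators $j_{\beta_i}(\tau_i)$ (for $i < l$) with the new one $j_\beta(\tau_l)$ into a single function of $l+1$ arguments is routine, using the commutativity of iterated ultrapower maps and the fact that the earlier generators lie below $\crit(j_{\beta,\beta+1}) = \kappa_\beta^\beta$ and so are fixed by $j_{\beta,\beta+1}$.
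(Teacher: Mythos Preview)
Your proof is correct and follows essentially the same approach as the paper: both arguments rest on the fact that the generators $a_i \in \lambda^{\beta_i}$ are fixed by the later iteration maps (since $\lambda^{\beta_i} < \kappa_{\beta_i+1}^{\beta_i+1} = \crit(j_{\beta_i+1,\beta})$), and both invoke Lemma~\ref{lemma: cofinal generators} to replace an arbitrary generator $a < \lambda^{\beta_i}$ by one of the form $j_{\beta_i}(\tau)$ with $\tau < \lambda$. The only difference is organizational: the paper starts from the standard representation of elements of an iterated ultrapower and normalizes all generators at once, whereas you build the representation by induction on $\beta$ and normalize the new generator at each successor step---but the mathematical content is identical.
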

\begin{proof}
  It is immediate from the definition of the iteration that for every $x \in M_\beta$ there are $\beta_0 < \dots < \beta_{l-1}$ below $\beta$, 
 finite subsets of ordinals $a_0,\dots, a_{l-1}$, with each $a_i \in [\lambda^{\beta_i}]^{<\omega}$ a generator of $E_{\beta_i}^{\beta_i}$, 
 and a function $g \colon  \prod_{i < l} \kappa_{\beta_i}^{<\omega} \to V$ so that 
 \[x = j_{0,\beta}(g)(j_{\beta_0+1,\beta}(a_0), \dots, j_{\beta_{l-1}+1,\beta}(a_{l-1})).\]

 Note that for each $i < l$, $j_{\beta_i+1,\beta}(a_i) = a_i$, because \[\crit(j_{\beta_i+1, \beta}) = \kappa_{\beta_i+1}^{\beta_{i}+1} > \lambda^{\beta_i}.\]
 
 It follows that
 \[x = j_{\beta}(g)(a_0, a_1, \dots, a_{l-1}).\]

 We may further assume that each $a_i < \lambda^{\beta_i}$ is an ordinal. 
 Finally, we claim that we may replace each $a_i$ with an ordinal of the form
$j_{\beta_i}(\tau_i)$, for some $\tau_i < \lambda$. 
 By Lemma \ref{lemma: cofinal generators}, in $M_{\beta_i}$, the extender $E_{\beta_i}^{\beta_i} = j_{\beta_i}(E_{\beta_i})$ is generated by 
 a subset of its measures, which are of the form $j_{\beta_i}(E_{\beta_i}(\tau_i)) = E_{\beta_i}^{\beta_i}(j_{\beta_i}(\tau_i))$. 
 Namely, for every $a_i \in \lambda^{\beta_i}$ there exists some $\tau_i \in \lambda$ such that 
 $a_i \leq^{RK}_{E_{\beta_i}^{\beta_i}} j_{\beta_i}(\tau_i)$. 

 In particular \[a_i = j_{\beta_i}(h)(b_1,\dots,b_k) \leq^{RK}_{E_{\beta_i}^{\beta_i}}  j_{\beta_i}(\tau_i).\]  The claim follows.
\end{proof}

Having defined the iterated ultrapower $M_{\eta}$ we proceed to introduce
relevant conditions in $j_{\eta}(\po)$.  Let $p = \l p_\alpha \mid \alpha <
\eta\r$ be a pure condition in $\po$, that is, a condition with $s^p
= \emptyset$.  For every $\alpha < \eta$, we set $p_\alpha = \la
f_\alpha,A_\alpha,\lambda\ra$, where $f_\alpha \in A(\lambda,\kappa_\alpha)$ and
$A_\alpha \in E_\alpha(d_\alpha)$ where $d_\alpha = \dom(f_\alpha) \in
[\lambda]^{\leq \kappa_\alpha}$.

Since $j_{\alpha,\alpha+1} =
j^{M_\alpha}_{E_\alpha^\alpha}$ it follows that the function
\[ \nu^{p^\alpha}_{\alpha,\alpha+1} := (j_{\alpha,\alpha+1}\uhr d_\alpha^\alpha)^{-1} \]
belongs to $j_{\alpha,\alpha+1}(A^\alpha_\alpha) = A^{\alpha+1}_\alpha$, which is the measure one set of the $\alpha$-th component of $j_{\alpha+1}(p)$.
By applying $j_{\alpha+1,\eta}$ to $j_{\alpha+1}(p)$, we conclude that the function 
\[\nu^{p^\alpha}_{\alpha,\eta} := j_{\alpha+1,\eta}(\nu^{p^\alpha}_{\alpha,\alpha+1}) = (j_{\alpha,\eta}\uhr d_\alpha^\alpha)^{-1}\]
belongs to $j_{\eta}(A_\alpha)$, and thus $j_{\eta}(p)$ has an one point
extension at the $\alpha$-th coordinate of the form 
\[j_{\eta}(p) \fr \la \nu^{p^\alpha}_{\alpha,\eta}\r.\]
It is clear that for every finite sequence $\alpha_0 < \dots < \alpha_{m-1}$ of
ordinals below $\eta$, and pure condition $p \in \po$, we have
$j_{\eta}(p) \fr \l \nu^{p^{\alpha_0}}_{\alpha_0,\eta}, \nu^{p^{\alpha_1}}_{\alpha_1,\eta}, \dots , \nu^{p^{\alpha_{m-1}}}_{\alpha_{m-1},\eta}\r$
is an extension of $j_{\eta}(p)$ in $j_{\eta}(\po)$. 
It is also clear that if $p,q \in \po$ are two pure conditions so that for all $\alpha < \eta$, $f_\alpha^p,f_\alpha^q$ are compatible, 
then for every two sequences $\alpha_0 < \dots < \alpha_{m-1}$ and $\beta_0 < \dots < \beta_{l-1}$ of ordinals below $\eta$, the conditions
\[j_{\eta}(p) \fr \l \nu^{p^{\alpha_0}}_{\alpha_0,\eta}, \nu^{p^{\alpha_1}}_{\alpha_1,\eta}, \dots , \nu^{p^{\alpha_{m-1}}}_{\alpha_{m-1},\eta}\r\]
and
\[j_{\eta}(q) \fr \l \nu^{q^{\beta_0}}_{\beta_0,\eta}, \nu^{q^{\beta_1}}_{\beta_1,\eta}, \dots , \nu^{q^{\beta_{l-1}}}_{\beta_{l-1},\eta}\r\] are compatible 
in $j_{\eta}(\po)$. We remark that in the assumption that the sequence of extenders
$\l E_\alpha \mid \alpha < \eta \r$ 
is Mitchell order increasing is used in order to be able to permute the order
in which the extensions are done, using a sequence of applications of Lemma \ref{lemma: commutative}.

Fix $\vec{H} = \l H(\alpha) \mid \alpha < \eta\r$ a $V$-generic for $\vec{\A} =
\prod_{\alpha < \eta} \A(\lambda,\kappa_\alpha)$. 

\begin{definition}\label{definition:G*-for-Gitik-forcing}
Define $G^* \subset j_{\eta}(\po)$ to 
be the filter generated by all conditions 
\[j_{\eta}(p) \fr \l \nu^{p^{\alpha_0}}_{\alpha_0,\eta}, \nu^{p^{\alpha_1}}_{\alpha_1,\eta}, \dots , \nu^{p^{\alpha_{m-1}}}_{\alpha_{m-1},\eta}\r,\] 
where $p \in \po$ satisfy that $f^p_\alpha \in H(\alpha)$ for all $\alpha < \eta$, $m < \omega$, and $\alpha_0 < \dots < \alpha_{m-1}$ are ordinals below $\eta$. 
\end{definition}

Our first goal is to prove that $G^*$ generates a $j_{\eta}(\po)$
generic filter over $M_{\eta}$. 

\begin{proposition}\label{Prop-G*generic}
 $G^*_{\eta}$ is $j_{\eta}(\po)$ generic over $M_{\eta}$. 
\end{proposition}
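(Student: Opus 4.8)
The plan is to verify that $G^*$ meets every dense open subset $D$ of $j_\eta(\po)$ that lies in $M_\eta$. Fix such a $D$. By Lemma \ref{Lem-itergens}, $D = j_\eta(\vec{D})(j_{\beta_0}(\tau_0),\dots,j_{\beta_{l-1}}(\tau_{l-1}))$ for some function $\vec{D}$ defined in $V$ and finitely many ordinals $\tau_i < \lambda$, $\beta_i < \eta$; we may take $\vec D$ to be a function into the set of dense open subsets of $\po$ (shrinking on a measure-one/club set if necessary). The first step is to reduce to a single dense open set in the ground model: using Lemma \ref{Lem-meetdense} in $V$ — applied simultaneously, via a fusion or diagonal-intersection argument over the $\kappa_{\beta_i}$-many relevant coordinates, to the family $\{\vec D(\vec\xi)\}$ — produce for each condition $p$ (with $f^p_\alpha \in H(\alpha)$, using genericity of $\vec H$ to realize the relevant direct extensions inside the Cohen generics) a direct extension $p^*$ and a finite set of pure coordinates $\{\gamma_0,\dots,\gamma_{m-1}\}$ such that $p^* \frown \vec\nu \in \vec D(\vec\xi)$ for all choices of $\vec\nu \in \prod_j A^{p^*}_{\gamma_j}$ and all relevant $\vec\xi$.

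The second step is to transfer this to $M_\eta$. Apply $j_\eta$ to the statement ``$p^*$ and $\{\gamma_0,\dots,\gamma_{m-1}\}$ witness Lemma \ref{Lem-meetdense} for $\vec D(\vec\xi)$'' and plug in the generators $j_{\beta_i}(\tau_i)$; this yields that $j_\eta(p^*)$ direct-extended and then one-point-extended at the coordinates $\gamma_0,\dots,\gamma_{m-1}$ by \emph{any} tuple from the $M_\eta$-measure-one sets $j_\eta(A^{p^*}_{\gamma_j})$ lands in $D$. Now I use the canonical generators of the iteration: the functions $\nu^{p^{*\gamma_j}}_{\gamma_j,\eta} = (j_{\gamma_j,\eta}\restriction d^{\gamma_j}_{\gamma_j})^{-1}$ belong to $j_\eta(A^{p^*}_{\gamma_j})$ by the computation carried out in the paragraph before the definition of $G^*$. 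Hence the condition $j_\eta(p^*) \frown \langle \nu^{p^{*\gamma_0}}_{\gamma_0,\eta},\dots,\nu^{p^{*\gamma_{m-1}}}_{\gamma_{m-1},\eta}\rangle$ lies in $D$, and by construction (and by the compatibility remark using the Mitchell-increasing hypothesis, which lets us permute the order of the one-point extensions via iterated applications of Lemma \ref{lemma: commutative}) this condition is compatible with — indeed below, after a further direct extension absorbing the move from $p$ to $p^*$ inside the Cohen generic $\vec H$ — a generator of $G^*$. Therefore $G^* \cap D \neq \emptyset$.

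It remains to check that $G^*$ is genuinely a filter on $j_\eta(\po)$, i.e. that the generating conditions are pairwise compatible and downward-directed; this is exactly the content of the two ``it is clear'' paragraphs preceding the definition, where compatibility of $f^p_\alpha, f^q_\alpha$ (guaranteed since both lie in the filter $H(\alpha)$) together with the Mitchell-order permutation argument gives compatibility of the corresponding one-point extensions of $j_\eta(p)$ and $j_\eta(q)$.

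The main obstacle I anticipate is the first step: making Lemma \ref{Lem-meetdense} work uniformly in the parameter $\vec\xi = (\xi_0,\dots,\xi_{l-1})$ ranging over $\prod_i \kappa_{\beta_i}$ while the coordinates $\gamma_j$ and the direct extension $p^*$ may a priori depend on $\vec\xi$. This requires either a genuine fusion across the $\kappa_{\beta_i}$-closed direct extension ordering, shrinking the $A_\alpha$'s finitely often to diagonalize, or an appeal to normality/the structure of the $E_\alpha(d)$ measures to argue that on a measure-one set the witnesses stabilize; getting the bookkeeping right so that the final $p^*$ still has $f^{p^*}_\alpha$ inside $H(\alpha)$ (and hence corresponds to a legitimate generator of $G^*$) is where the care is needed. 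One also has to be slightly careful that the reduction to a \emph{single} dense open set in step one is compatible with the finitely many generators $j_{\beta_i}(\tau_i)$ already used to name $D$; this is handled by including those coordinates among the $\gamma_j$'s from the outset.
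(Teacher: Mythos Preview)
Your proposal has a genuine gap, and it is precisely the obstacle you yourself flag: the ``uniform'' application of Lemma~\ref{Lem-meetdense} to the family $\{\vec D(\vec\xi)\mid \vec\xi\in\prod_i\kappa_{\beta_i}\}$ cannot be carried out in $V$ as you describe. The fusion you sketch would have to thread together $\prod_i\kappa_{\beta_i}$-many direct extensions, but the direct-extension order of $\po$ is only $\kappa_\alpha$-closed at coordinate $\alpha$; at coordinates $\alpha<\beta_0$ you have no room to absorb $\kappa_{\beta_{l-1}}$-many successive shrinkings. Nor does normality rescue you: there is no reason the finite witness set $\{\gamma_0,\dots,\gamma_{m-1}\}$ should stabilize on a measure-one set of $\vec\xi$, and even if the $\gamma_j$'s stabilized, the direct extensions $p^*_{\vec\xi}$ generally will not agree on their lower coordinates. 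More fundamentally, it is simply false that a single condition can lie in \emph{all} of the $\vec D(\vec\xi)$ simultaneously (they may be pairwise incompatible), so the phrase ``for all relevant $\vec\xi$'' is hiding the real work: you must somehow tie the choice of $\vec\xi$ to the one-point extensions at the $\beta_i$'s, and your proposal does not say how.

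The paper resolves this not by fusion in $V$ but by passing to the finite subiterate $N^F$ associated to $F=\{\beta_0,\dots,\beta_{l-1}\}$. Since $M_\eta$ is the direct limit of these $N^F$, the dense set $D$ has a \emph{single} preimage $\bar D\in N^F$ under the factor map $k^F$, so no uniformity over $\vec\xi$ is needed. One then applies Lemma~\ref{Lem-meetdense} inside $N^F$ to the natural non-pure extension $p'$ of $i^F(p)$, which already has the coordinates in $F$ non-pure; this yields a single $p^*\geq^* p'$ and a single finite set of further pure coordinates. The second missing ingredient is the construction of the $N^F$-generic $\vec H^F$ for the product $\vec{\A}^F$ of (shifted) Cohen posets: this is obtained from $\vec H$ by a Woodin-style surgery combined with Easton's lemma, and it is the genericity of $\vec H^F$ (not of $\vec H$ over $V$) that guarantees the Cohen part of $p^*$ can be taken inside the right filter. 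Applying $k^F$ then lands the resulting condition in $D\cap G^*$. Your idea of ``including those coordinates among the $\gamma_j$'s from the outset'' is a shadow of this move, but carrying it out correctly requires working in $N^F$ rather than in $V$.
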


Before moving to the proof of the proposition, we discuss finite subiterates of $M_{\eta}$.

\subsection{Finite sub-iterations $N^F$}\label{Rem-finitesubiter}  The model
$M_{\eta}$ can be seen as a directed limit of all its finite subiterates, $N^F$,
$F \in [\eta]^{<\omega}$.  Given a finite set $F = \{ \beta_0,\dots,\beta_{l-1}
\} \in [\eta]^{<\omega}$, we define its associated iteration $\la N^F_{i},
i^F_{m,n} \mid m \leq n \leq l\ra$ by $N^F_0 = V$, and $i^F_{n,n+1} \colon  N^F_n \to
N^F_{n+1} \cong \Ult(N^F_{n}, i^F_n(E_{\beta_{n}}))$.  Since for the moment we
handle a single finite set at a time, we will suppress the mention of the finite
set $F$ and refer only to the iteration as $i_{m,n}\colon N_m \to N_n$ for $m \leq n
\leq l$ and as usual we set $i_n = i_{0,n}$ for $n \leq l$.

The proof of Lemma \ref{Lem-itergens} shows that the elements of $N_l$ are of
the form $i_l(f)\left(\tau_0, i_{1}(\tau_1), \dots ,
i_{l-1}(\tau_{l-1})\right)$.  Let $k\colon  N_l \to M_{\eta}$ be the usual factor
map defined by
\[ k(i_l(f)\left(\tau_0, i_{1}(\tau_1), \dots,
i_{l-1}(\tau_{l-1})\right)) = j_{\eta}(f)\left(j_{\beta_0}(\tau_0), j_{\beta_1}(\tau_1), \dots, j_{\beta_{l-1}}(\tau_{l-1})\right)\]

It is routine to verify that $k$ is well defined, elementary and $j_{\eta} = k \circ i_l$.

Moreover, following this explicit description of $k$, it is straightforward to
verify that $k : N_l \to M_{\eta}$ is the resulting iterated ultrapower limit
embedding, associated to the iteration of $N_l$ by the sequence 
\[\l i_{0,n_\alpha}(E_\alpha) \mid \alpha \in \eta \setminus \{\beta_0, \dots
\beta_{l-1}\}\r,\] where $n_\alpha$ is the minimal $n < l$ for which $\beta_n \geq
\alpha$, if exists, and $n_\alpha = l$ otherwise. This again uses the Mitchell order
assumption of the sequence of extenders in order to get the desired commutativity.

Next, we observe that our assignment of generators 
$\nu^{p^\alpha}_{\alpha,\eta}$, $\alpha < \eta$, to $j_{\eta}(p)$ of conditions
$p \in \po$, in Definition \ref{definition:G*-for-Gitik-forcing}, can be defined at the level of the finite subiterates.

Indeed, for a condition $p \in \po$, $n < l$, we temporarily define
\[ \nu^{i_n(p)}_{n,n+1} = (i_{n,n+1}\uhr d)^{-1} = \{ (i_{n,n+1}(\tau), \tau) \mid \tau \in d\},\]
where $d = \dom(f^{i_n(p)})_{\beta_n}$.  We let $\nu^{i_n(p)}_{n,l} =
i_{n+1,l}(\nu^{i_n(p)}_{n,n+1})$ be the natural push forward to $N_l$.  As with
the conditions $j_{\eta}(p)$, we have that \[i_l(p) \fr \la \nu^{i_0(p)}_{0,l},
\dots, \nu^{i_{l-1}(p)}_{l-1,l} \ra\] is a valid extension of $i_l(p)$ in
$i_l(\po)$.

To record this definition (which depends on $F$), we make a few permanent
definitions which explicitly mention $F$ as well as objects from the discussion above: 
\begin{definition}\label{definition:natural-non-pure-extension}
Fix $i^F =
i_l$ and $k^F=k$ and the model $N^F = N_l$. Denote the condition defined
above by $i^F(p) \fr \vec{\nu}^{p,F}$ and refer to it as the \emph{natural
non-pure extension of} $i^F(p)$.
\end{definition}
Using the description of $k^F \colon N^F \to M_{\eta}$, it is straightforward
to apply {\L}o\'{s}'s Theorem and show that $k^F(\vec{\nu}^{p,F}) = \l
\nu^{p^{\beta_n}}_{\beta_n,\eta}\mid n < l \r$. We conclude that 
\[ 
k^F\left( i^F(p) \fr \vec{\nu}^{p,F}  \right) = 
 j_{\eta}(p) \fr \l \nu^{p^{\beta_0}}_{\beta_0,\eta}, \dots ,
\nu^{p^{\beta_{l-1}}}_{\beta_{l-1},\eta}\r.
\]

Finally, we note that the forcing $\vec{\A} = \prod_{\alpha <
\eta}\A(\lambda,\kappa_\alpha)$ has a natural factorization, associated with
$F$.  Setting $\beta_{-1} = 0$ and $\beta_l = \eta$, we have \[\vec{\A} =
\prod_{n \leq l}\vec{\A}\uhr{[\beta_{n-1},\beta_{n})},\] where for each $n \leq
l$, \[\vec{\A}\uhr{[\beta_{n-1},\beta_{n})} = \prod_{\beta_{n-1} \leq \alpha <
\beta_n} \A(\lambda,\kappa_\alpha).\]  For each $0\leq n \leq l$, we define 
\[ \vec{\A}^F_{n} = i_{n}(\vec{\A}\uhr{[\beta_{n-1},\beta_{n})}) \]
and denote the resulting product by $\vec{\A}^F = \prod_{n \leq l}
\vec{\A}^F_n$.  Suppose that $\vec{H} \subseteq \vec{\A}$ is a $V$-generic
filter.  For each $n \leq l$, the fact $\vec{\A}\uhr{[\beta_{n-1},\beta_{n})}$
is a $\kappa_{\beta_{n-1}}^+$-closed forcing guarantees that
$i_n``\vec{H}\uhr{[\beta_{n-1},\beta_{n})}) \subseteq \vec{\A}^F_n$ forms a
generic filter over $N_n$, hence also $N_l=N^F$.\footnote{This is true by \cite[Proposition 15.1]{CummingsHandbook}, since the width of the embedding is smaller than its distributivity.}  For each $n$, we denote the
resulting $N_l$ generic for $\vec{\A}^F_n$ by $\vec{J}^F_n$.

This is the crucial part: the main affect of taking a non-pure extension of a condition is the reflection of its Cohen part downwards. This reflection is exactly moving from $\vec{H}$ to $\vec{J}^F_n$.

It is clear that the product $\prod_{0 \leq n \leq l} \vec{J}^F_n$ is
$\vec{\A}^F$ generic over $N_l$.  We note that for each pure $p \in \po$ with
$\vec{f}^p \in \vec{H}$, if $\vec{f}$ is the Cohen part of the natural non-pure
extension of $i^F(p)$, then $\vec{f} \in \vec{\A}^F$.  Moreover the collection
of such $\vec{f}$ forms an $\vec{\A}^F$-generic filter over $N^F$ which is
obtained by modifying $\prod_{0 \leq n \leq l} \vec{J}^F_n$ on the coordinates
$\beta_n$ for $n<l$ by taking the natural non-pure extensions of $i^F(p)$ for $p\in \vec{J}^F$, see Definition \ref{definition:natural-non-pure-extension}.  

For each individual modified filter, its genericity follows from Woodin's surgery argument
\cite{Woodin-surgery}\footnote{Indeed, for every set $a \in N^F$ of cardinality $\leq \iota_k(\kappa_{\beta_k})$, we modify the generic at most at $\kappa_{\beta_k}$ many coordinates}.  The fact that the product remains generic follows by induction on $n$ from
applications of Easton's lemma, using the chain condition of the forcing notions with smaller indexes and the closure of the components with larger indexes.  We denote by
$\vec{H}^F$ the resulting $N^F$-generic filter over $\vec{\A}^F$.

\begin{remark} \label{F-invariance} $G^*$ can be constructed in $N^F[\vec{H}^F]$
in the same way that it was constructed in $V$ using the fact that $M_\eta$ can
be described as an iterated ultrapower of $N^F$ and starting with conditions $q$
such that $s^q=F$ and $\vec{f}^q \in \vec{H}^F$. \end{remark}

We turn to the proof of Proposition \ref{Prop-G*generic}.
\begin{proof} It is clear that $G^* \subseteq j_{\eta}(\po)$ is a filter. We
verify that $G^*$ meets every dense open set $D \subseteq j_{\eta}(\po)$ in
$M_{\eta}$.  Since $M_\eta$ is the direct limit of its finite subiterates.
There are $F = \{\beta_0, \dots \beta_{l-1}\} \in [\eta]^{<\omega}$ and
$\bar{D}$ such that $k^F(\bar{D}) = D$.

Let $p'$ be the natural non-pure extension of $i^F(p)$ for some $p$ with
$\vec{f}^p \in \vec{H}$.  Now by its definition $\vec{f}^{p'} \in \vec{H}^F$.
Appealing to Lemma \ref{Lem-meetdense} and the genericity of $\vec{H}^F$, we
conclude that there exists a direct extension $p^* \geq^* p'$, with $\l
f^{p^*}_\alpha \mid \alpha <\eta\r \in \vec{H}^F$ and a finite set $\{
\alpha_0,\dots,\alpha_{m-1}\} \subset (\eta\setminus F)$, such that $p^* \fr
\vec{\nu} \in \bar{D}$ for every $\vec{\nu} = \la \nu_{\alpha_0},\dots,
\nu_{\alpha_{m-1}}\ra \in \prod_{i<m} A^{p^*}_{\alpha_i}$.  By the elementarity
of $k^F$, $k^F(p^*) \fr \vec{\nu} \in D$ for every $\vec{\nu} \in \prod_{i<m}
k^F(A^{p^*})_{\alpha_i}$.

In particular, 
  \[k^F(p^*) \fr \l \nu^{(p^*)^{\alpha_0}}_{\alpha_0,\eta}, \dots , 
  \nu^{(p^*)^{\alpha_{m-1}}}_{\alpha_{m-1},\eta}\r \in \mathcal{D}.\]
  It remains to verify that the last condition belongs to $G^*$.  This is
immediate from Remark \ref{F-invariance} and the fact that $\vec{f}^{p^*} \in
\vec{H}^F$. \end{proof}

\begin{proposition}\label{proposition: closure}
 $M_{\eta}[G^*]$ is closed under $\kappa_0 = \crit(j_{\eta})$ sequences of its elements in $V[\vec{H}]$.
\end{proposition}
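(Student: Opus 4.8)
The plan is to pull the statement back, via {\L}o\'s‑style representations of the elements of $M_\eta$, to the single fact that $G^*$ encodes the inverse of the iteration embedding on $j_\eta``\lambda$.

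Work in $V[\vec{H}]$ and let $\vec{y}=\la y_i\mid i<\kappa_0\ra$ with each $y_i\in M_\eta[G^*]$. Using choice in $V[\vec{H}]$ I would pick for each $i$ a $j_\eta(\po)$‑name $\dot y_i\in M_\eta$ with $\dot y_i^{G^*}=y_i$ and, by Lemma~\ref{Lem-itergens}, ordinals $\beta^i_0<\dots<\beta^i_{l_i-1}<\eta$, ordinals $\tau^i_0,\dots,\tau^i_{l_i-1}<\lambda$ and $f_i\in V$ with $\dot y_i=j_\eta(f_i)\big(j_{\beta^i_0}(\tau^i_0),\dots,j_{\beta^i_{l_i-1}}(\tau^i_{l_i-1})\big)$. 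Put $\vec p_i=\la j_{\beta^i_k}(\tau^i_k)\mid k<l_i\ra$, $\vec\beta_i=\la\beta^i_k\mid k<l_i\ra$, $\vec\tau_i=\la\tau^i_k\mid k<l_i\ra$. Since $\vec{\A}$ is a full‑support product of $\kappa_0^+$‑closed forcings (each $\A(\lambda,\kappa_\alpha)$ is $\kappa_\alpha^+$‑closed), it is itself $\kappa_0^+$‑closed, so it adds no new $\kappa_0$‑sequence of members of $V$; as $f_i,\vec\beta_i,\vec\tau_i\in V$ this gives $F:=\la f_i\mid i<\kappa_0\ra\in V$, $\la\vec\beta_i\mid i<\kappa_0\ra\in V$ and $\la\vec\tau_i\mid i<\kappa_0\ra\in V$. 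Now $j_\eta(F)\in M_\eta$ and $j_\eta(F)(i)=j_\eta(f_i)$ for $i<\kappa_0$, so $\la\dot y_i\mid i<\kappa_0\ra=\la j_\eta(F)(i)(\vec p_i)\mid i<\kappa_0\ra$; hence it suffices to show $\la\vec p_i\mid i<\kappa_0\ra\in M_\eta[G^*]$, since then $\la\dot y_i\mid i<\kappa_0\ra\in M_\eta[G^*]$ and, evaluating at $G^*$ coordinatewise, $\vec{y}\in M_\eta[G^*]$.

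The crucial point is that $G^*$ recovers $(j_{\alpha,\eta}\uhr\lambda^\alpha)^{-1}$ uniformly for $\alpha<\eta$: for a pure $p$ with $f^p_\alpha\in H(\alpha)$ for all $\alpha$, and any fixed $\alpha$, the one‑point extension $j_\eta(p)\fr\la\nu^{p^\alpha}_{\alpha,\eta}\ra$ lies in $G^*$, and its $\alpha$‑th coordinate carries $\nu^{p^\alpha}_{\alpha,\eta}=(j_{\alpha,\eta}\uhr j_{0,\alpha}(\dom f^p_\alpha))^{-1}$, which is read off $G^*$ using the convention of item~(4) in the definition of $\po$. By genericity of $H(\alpha)$ the sets $j_{0,\alpha}(\dom f^p_\alpha)$ exhaust $\lambda^\alpha$ as $p$ varies, so the union of these $\alpha$‑th coordinates over the conditions of $G^*$ equals $(j_{\alpha,\eta}\uhr\lambda^\alpha)^{-1}$, and the map $\alpha\mapsto(j_{\alpha,\eta}\uhr\lambda^\alpha)^{-1}$ lies in $M_\eta[G^*]$. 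In particular $\pi:=(j_\eta\uhr\lambda)^{-1}\in M_\eta[G^*]$, and then $M_\eta[G^*]$ computes $E\colon\eta\times\lambda\to M_\eta$, $E(\alpha,\tau)=j_{0,\alpha}(\tau)$, because $j_{0,\alpha}(\tau)=(j_{\alpha,\eta}\uhr\lambda^\alpha)^{-1}(j_\eta(\tau))$ with $j_\eta(\tau)=\pi^{-1}(\tau)\in j_{\alpha,\eta}``\lambda^\alpha$.

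To finish, $\la\vec\tau_i\mid i<\kappa_0\ra$ is a $\kappa_0$‑sequence of finite subsets of $\lambda$, so through a bijection $[\lambda]^{<\omega}\cong\lambda$ in $M_\eta$
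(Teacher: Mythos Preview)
Your overall strategy --- represent the names $\dot y_i$ via Lemma~\ref{Lem-itergens}, push the $\kappa_0$-sequence of representing data into $V$ by the $\kappa_0^+$-closure of $\vec{\A}$, and recover the generator values $j_\beta(\tau)$ from $G^*$ --- is the paper's approach. The paper streamlines it by first reducing to sequences of ordinals and then using $\kappa_0^+$-directedness of the Rudin--Keisler order to collapse all $\tau^i_k$ to a single $\tau^*$, so that only the one $\eta$-sequence $\la j_\alpha(\tau^*)\mid\alpha<\eta\ra$ must be read off $G^*$, with $j_\eta(\tau^*)\in M_\eta$ as parameter.

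There is a real gap in your middle paragraph. The union of the $\alpha$-th Cohen components over $G^*$ is \emph{not} $(j_{\alpha,\eta}\uhr\lambda^\alpha)^{-1}$: for $q=j_\eta(p)\fr\la\nu^{p^\alpha}_{\alpha,\eta}\ra$ one has $f^q_\alpha=j_\eta(f^p_\alpha)\fr\nu^{p^\alpha}_{\alpha,\eta}$, and $\dom(j_\eta(f^p_\alpha))=j_\eta(d_\alpha)$ properly contains $\dom(\nu^{p^\alpha}_{\alpha,\eta})=j_{\alpha,\eta}``d_\alpha^\alpha$ once $|d_\alpha|=\kappa_\alpha$. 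On the excess points the value is the Cohen value $j_\eta(f^p_\alpha)(\sigma)$, not $(j_{\alpha,\eta})^{-1}(\sigma)$. Item~(4) only distinguishes pure from non-pure coordinates; it does not separate these two kinds of values within a fixed non-pure coordinate. Thus your argument does not produce $(j_{\alpha,\eta}\uhr\lambda^\alpha)^{-1}$ as a set in $M_\eta[G^*]$, nor $\pi=(j_\eta\uhr\lambda)^{-1}$, and the definition of $E(\alpha,\tau)$ through $\pi^{-1}$ is circular: you need $j_\eta\uhr\lambda$ to build $E$, but that is exactly what is in question.

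The repair is to drop $\pi$ and $E$ altogether. You already have $s:=\la\vec\tau_i\mid i<\kappa_0\ra\in V$, hence $j_\eta(s)\uhr\kappa_0=\la\,\la j_\eta(\tau^i_k)\mid k<l_i\ra\mid i<\kappa_0\ra\in M_\eta$. Writing $F_\alpha$ for the generic function at coordinate $\alpha$ (uniformly definable from $G^*$), the paper's computation gives $F_\alpha(j_\eta(\tau))=j_\alpha(\tau)$ for every $\tau<\lambda$. Then $\vec p_i=\la F_{\beta^i_k}(j_\eta(\tau^i_k))\mid k<l_i\ra$ is computable in $M_\eta[G^*]$ from $j_\eta(s)\uhr\kappa_0$, $\la\vec\beta_i\ra$ and $\la F_\alpha\mid\alpha<\eta\ra$, with no need to isolate $(j_{\alpha,\eta})^{-1}$. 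This is the paper's argument without the RK reduction. (Your text also breaks off mid-sentence before reaching this point.)
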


\begin{proof}
Let $\l x_\mu \mid \mu < \kappa_0\r$ be a sequence of elements in
$M_{\eta}[G^*]$. Since $M_\eta[G^*]$ is a model of $\mathrm{ZFC}$, 
we may assume that all $x_\mu$ are ordinals.  By Lemma
\ref{Lem-itergens}, for each $x_\mu$ is of the form 
\[ x_\mu = j_{\eta}(g_\mu)(j_{\beta^\mu_0}(\tau^\mu_0), \dots,
j_{\beta^\mu_{l^\mu-1}}(\tau^\mu_{l^\mu-1})) \]

for some finite sequences $\beta^\mu_0 < \dots, \beta^\mu_{l^\mu-1} < \eta$ and
$\tau^\mu_0, \dots, \tau^\mu_{l^\mu-1} < \lambda$.  Moreover, since the
Rudin-Keisler order $\leq^{RK}_{E_{\beta^\mu_i}}$ is $\kappa_0^+$-directed, we
may assume that there exists some $\tau^* < \lambda$ such that $\tau^\mu_i =
\tau^*$ for all $\mu < \kappa_0$ and $i<l^\mu$. Hence for $\l x_\mu \mid \mu <
\kappa_0\r$ to be a member of $M_{\eta}[G^*]$, it suffices to verify that the
sequences $\la j_{\eta}(g_\mu) \mid \mu < \kappa_0\ra$ and $\l
j_{\alpha}(\tau^*) \mid \alpha < \eta\r$ belong to $M_{\eta}[G^*]$.

The first sequence already belongs to $M_{\eta}$ as $\crit(j_{\eta}) = \kappa_0$
implies that it is just $j_{\eta}(\vec{g})\uhr \kappa_0$.  
The latter sequence $\la j_{\alpha}(\tau^*) \mid \alpha < \eta\ra$ can be recovered from 
$G^*$ as follows. 
It follows from a simple density argument that for every $\alpha < \eta$, there exists some $q \in G^*$ such that
\begin{enumerate}
\item $\alpha$ is a non-pure coordinate of $q$, and
\item $j_{\eta}(\tau^*) \in \dom(f^q_\alpha)$.
\end{enumerate}
 It is also clear that two
conditions $q,q' \in G^*$ of this form must satisfy $f^{q}_\alpha(j_{\eta}(\tau^*)) = f^{q'}_\alpha(j_{\eta}(\tau^*))$. 
We may therefore define in $M_{\eta}[G^*]$ a function $t_{j_{\eta}(\tau^*)} \colon \eta \to \lambda^{\eta}$ by 
$t_{j_{\eta}(\tau^*)}(\alpha) = f^{q}_\alpha(j_{\eta}(\tau^*))$ for some condition $q \in G^*$ as above. 
We claim that $t_{j_{\eta}(\tau^*)}(\alpha) = j_{\alpha}(\tau^*)$ for all $\alpha < \eta$. Indeed, for every $\alpha < \eta$, 
there exists a condition $p \in \po$ with $\l f^p_\alpha \mid \alpha < \eta \r \in 
\vec{H}$ so that $\tau^* \in \dom(f^p_\alpha)$, and clearly, the condition $q = j_{\eta}(p) \fr \la \nu^{p^\alpha}_{\alpha,\eta}\ra \in G^*$ has $j_{\eta}(\tau^*) \in \dom(f^q_\alpha)$. 
But $j_{\eta}(\tau^*) \in \dom(\nu^{p^\alpha}_{\alpha,\eta}) = j_{\alpha+1,\eta}``j_{\alpha}(\dom(f^p_\alpha))$ and 
$\nu^{p^\alpha}_{\alpha,\eta} = (j_{\alpha,\eta}\uhr d_\alpha^\alpha)^{-1}$, thus, it follows that 
\[ f^{q}_\alpha(j_{\eta}(\tau^*)) = \nu^{p^\alpha}_{\alpha,\eta}(j_{\eta}(\tau^*)) = j_{\alpha}(\tau^*).\]
\end{proof}

Fix some ordinal $\beta < \eta$. The forcing $\vec{\A}$ naturally breaks into the product $\vec{\A}\uhr \beta \times \vec{\A}_{\geq \beta}$, and we observe that 
\begin{enumerate}
 \item The latter part $\vec{\A}_{\geq \beta}$ is $\kappa_\beta^+$-closed.
Therefore, if $\vec{H}_{\geq \beta} \subset \vec{\A}_{\geq \beta}$ is
$V$-generic, then by \cite[Proposition 15.1]{CummingsHandbook} its pointwise image
$j_{\beta}``\vec{H}_{\geq \beta}$ generates an $M_{\beta}$ generic filter for
the forcing \[j_{\beta}(\vec{\A}_{\geq \beta}) = \prod_{\beta \leq \alpha <
\eta} \A(\lambda^\beta,\kappa_\alpha^\beta)^{M_\beta}.\] We denote this generic
by $\vec{H}^{\beta}_{\geq \beta}$. 
 \item Using the same arguments as above, a $V$-generic filter
$\vec{H}_\beta \subset \vec{\A}\uhr\beta$ generates an $M_\beta$-generic filter
$G^*_\beta$  for the $j_{\beta}(\po_{\vec{E}\uhr \beta})$.
\end{enumerate}

We conclude that in the model $V[\vec{H}]$, we can form the generic extension
$M_\beta[G^*_\beta \times \vec{H}^\beta_{\geq \beta}]$ of $M_\beta$, with
respect to the product $j_{\beta}(\po_{\vec{E}\uhr \beta}) \times
j_{\beta}(\vec{\A}_{\geq \beta})$.

We have the following.

\begin{proposition}
 For each $\beta < \eta$, the model $M_{\beta}[G^*_\beta \times
\vec{H}^\beta_{\geq \beta}]$ can compute $M_{\eta}[G^*_{\eta}]$ and therefore \[
\bigcap_{\beta<\eta}M_\beta[G^*_\beta\times \vec{H}^\beta_{\geq\beta}] \supseteq
M_\eta[G^*].\]
\end{proposition}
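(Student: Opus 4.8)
The plan is to prove the two statements simultaneously: that each $M_\beta[G^*_\beta \times \vec H^\beta_{\geq\beta}]$ contains $M_\eta[G^*]$, and that the intersection of these models over $\beta < \eta$ is exactly $M_\eta[G^*]$. The containment $M_\eta[G^*] \subseteq \bigcap_\beta M_\beta[G^*_\beta \times \vec H^\beta_{\geq\beta}]$ is the substantive direction, and once it is established the reverse inclusion and the ``computes'' assertion follow by a standard intersection argument.

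First I would set up, for a fixed $\beta < \eta$, the factorization of the full iteration through $M_\beta$. The iterated ultrapower $\langle M_\alpha, j_{\alpha,\gamma}\rangle$ factors as $M_\eta \cong$ the iteration of $M_\beta$ by the tail sequence $\langle j_\beta(E_\alpha) \mid \beta \leq \alpha < \eta\rangle$; this is the infinitary analogue of the finite-subiterate factorization already used in Subsection \ref{Rem-finitesubiter}, and it uses the Mitchell-order assumption for commutativity just as there. Under this factorization, $j_\beta(\po)$ (which contains $j_\beta(\po_{\vec E \uhr \beta})$ as the ``first $\beta$ coordinates'' and has a tail part governed by the tail extenders) is a forcing over $M_\beta$, and $G^*$ restricted to the first $\beta$ coordinates is exactly $G^*_\beta$. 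The key point is that $G^*$ itself is reconstructed from $G^*_\beta$ together with the generic diagonal sequences $\nu^{p^\alpha}_{\alpha,\eta}$ for $\alpha \geq \beta$, and these in turn can be read off from $\vec H^\beta_{\geq\beta}$: namely, $\nu^{p^\alpha}_{\alpha,\eta} = (j_{\alpha,\eta}\uhr d_\alpha^\alpha)^{-1}$, and by the argument in Proposition \ref{proposition: closure} the relevant values $f^q_\alpha(j_\eta(\tau)) = j_\alpha(\tau)$ are determined by the Cohen functions coming from $j_\beta``\vec H_{\geq\beta}$. So I would argue that $M_\beta[G^*_\beta \times \vec H^\beta_{\geq\beta}]$ has all the ingredients — the image conditions $j_\eta(p)$ (computed inside $M_\beta$ after applying the tail iteration, which is an inner process), and the generic sequences — to define $G^*$ and hence $M_\eta[G^*]$. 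Concretely, $M_\eta$ is definable inside $M_\beta$ (as an iterated ultrapower), and $G^*$ is definable in $M_\beta[G^*_\beta\times\vec H^\beta_{\geq\beta}]$ from these parameters, in the manner of Remark \ref{F-invariance}.

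For the reverse inclusion $\bigcap_\beta M_\beta[G^*_\beta\times\vec H^\beta_{\geq\beta}] \subseteq M_\eta[G^*]$, I would take a set $x$ of ordinals lying in every $M_\beta[G^*_\beta\times\vec H^\beta_{\geq\beta}]$ and show $x \in M_\eta[G^*]$. By Proposition \ref{proposition: closure}, $M_\eta[G^*]$ is closed under $\kappa_0$-sequences in $V[\vec H]$, so it suffices to handle sets $x$ with $|x| < \kappa_0$ first and then bootstrap; more directly, one shows that for each $\beta$ the ``new'' information in $M_\beta[G^*_\beta\times\vec H^\beta_{\geq\beta}]$ beyond $M_\eta[G^*]$ is confined to the extra genericity of $\vec H$ below coordinate $\beta$ and of $\vec H_{\geq\beta}$ over $M_\beta$ rather than $M_\eta$, and this information genuinely depends on $\beta$ — so anything in all of them was already in $M_\eta[G^*]$. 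The cleanest route is: since $\vec\A\uhr\beta$ is small relative to $\kappa_\beta$ while $\vec\A_{\geq\beta}$ is $\kappa_\beta^+$-closed, a standard argument shows $M_\beta[G^*_\beta\times\vec H^\beta_{\geq\beta}] \cap M_\eta[G^*] \cap V_\theta$ stabilizes; more precisely one shows directly that $M_\beta[G^*_\beta\times\vec H^\beta_{\geq\beta}]$ is an extension of $M_\eta[G^*]$ by a forcing that becomes ``more generic'' as $\beta$ grows, and invoke a standard lemma on intersections of such a decreasing tower of generic extensions (the relevant mutual genericity and Easton's lemma facts are exactly those cited in Subsection \ref{Rem-finitesubiter}).

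The main obstacle, I expect, is verifying carefully that $G^*$ — defined in $V[\vec H]$ by an external recipe involving all of $\vec H$ — is in fact definable inside $M_\beta[G^*_\beta\times\vec H^\beta_{\geq\beta}]$ from parameters available there, i.e. that the passage from $\vec H$ to the diagonal generators $\nu^{p^\alpha}_{\alpha,\eta}$ does not secretly require the part of $\vec H$ below $\beta$ in an essential way for coordinates $\geq\beta$, and conversely that the $G^*_\beta$ component genuinely captures the behavior on coordinates $<\beta$. This is really a matter of tracing the $F$-invariance of the construction (Remark \ref{F-invariance}) to the level of the infinite split at $\beta$ rather than finite sets $F$, together with the observation that the overwriting/surgery modifications used to pass from $\prod_n \vec J^F_n$ to $\vec H^F$ localize to the finitely many coordinates being made non-pure — and at the level of the $\beta$-split there are no such coordinates above $\beta$ in $G^*_\beta$, so the bookkeeping is cleaner than in the finite-subiterate case. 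Once that is in hand, both inclusions and the ``computes'' claim are routine.
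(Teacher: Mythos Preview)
The paper states this proposition without proof; the authors presumably regard it as a routine consequence of the machinery in Subsection~\ref{Rem-finitesubiter} and Remark~\ref{F-invariance}. Only the first assertion (that each $M_\beta[G^*_\beta \times \vec H^\beta_{\geq\beta}]$ can compute $M_\eta[G^*]$) is actually used later, in Section~\ref{reflection}. Your argument for that direction is correct and is exactly the intended one: $M_\eta$ is definable in $M_\beta$ as the tail iteration by $\langle j_\beta(E_\alpha) \mid \beta \leq \alpha < \eta\rangle$, and the recipe for $G^*$ relativizes to $M_\beta[G^*_\beta \times \vec H^\beta_{\geq\beta}]$ just as in Remark~\ref{F-invariance}, with the split at $\beta$ replacing the finite set $F$.

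Your treatment of the reverse inclusion, however, is not a proof. You invoke ``a standard lemma on intersections of a decreasing tower of generic extensions,'' but you have neither shown the tower is decreasing nor named the lemma, and the bootstrap from $|x|<\kappa_0$ via Proposition~\ref{proposition: closure} does not obviously go through. If you want an honest argument, imitate Lemmas~\ref{intersection1} and~\ref{intersection2} from Section~\ref{ctble}: given a set of ordinals $x$ in the intersection, set $x_\beta = \{\gamma \mid j_{\beta,\eta}(\gamma)\in x\}$, take a name $\dot x_\beta$ for it over the product forcing in $M_\beta$, push forward along $j_{\beta,\eta}$, and use closure of $M_\eta[G^*]$ under $\eta$-sequences (a consequence of Proposition~\ref{proposition: closure}, since $\eta<\kappa_0$) to assemble $\langle j_{\beta,\eta}(\dot x_\beta)\mid\beta<\eta\rangle$ inside $M_\eta[G^*]$. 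The real work is then checking that the interpretations of these names by generics available in $M_\eta[G^*]$ recover $x$; that step needs to be written out, and your proposal does not address it.
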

It seems likely that equality holds, $\bigcap_{\beta<\eta}M_\beta[G^*_\beta\times \vec{H}^\beta_{\geq\beta}] = M_\eta[G^*]$, but we will not need that in our argument.


\section{Stationary reflection in $M_\eta[G^*_\eta]$} \label{reflection}

We assume that for every $\alpha < \eta$, there is a Laver indestructible
supercompact cardinal $\theta_\alpha$ such that $\sup_{\beta<\alpha}
\kappa_\beta < \theta_\alpha < \kappa_\alpha$.

We prove the following which completes the proof of Theorem \ref{mainthm}.

\begin{theorem} \label{gstar-reflection} In $M_{\eta}[G^*]$, every collection of
fewer than $\eta$ many stationary subsets of $j_{\eta}(\bar{\kappa}_{\eta}^+)$
reflects. \end{theorem}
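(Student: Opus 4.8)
The plan is to prove stationary reflection in $M_\eta[G^*]$ by factoring through the supercompact cardinals $\theta_\alpha$ sitting below each $\kappa_\alpha$, in the spirit of \cite{HU}. Fix a family $\langle S_i \mid i \in I\rangle$ of fewer than $\eta$ stationary subsets of $\mu := j_\eta(\bar\kappa_\eta^+)$ in $M_\eta[G^*]$; since $\eta$ is regular and $|I|<\eta$, all the $S_i$ already appear at some bounded stage, more precisely there is $\beta<\eta$ and names for the $S_i$ lying in $M_\beta[G^*_\beta\times \vec H^\beta_{\geq\beta}]$ (using the last Proposition, that $\bigcap_{\beta<\eta} M_\beta[G^*_\beta\times\vec H^\beta_{\geq\beta}] = M_\eta[G^*]$, together with a counting/reflection argument on the supports of the $S_i$). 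The goal then becomes to find a single ordinal $\delta<\mu$ of uncountable cofinality at which every $S_i$ reflects, and the reflection point should be produced by a supercompactness embedding associated to $\theta_\gamma$ for a suitable $\gamma$ with $\beta<\gamma<\eta$.

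First I would analyze the quotient forcing. Writing $j_\eta(\po)$ as it acts above stage $\beta$, the generic $G^*$ decomposes relative to $\beta$ into the "lower" part $G^*_\beta$ and an "upper" part which — after the natural non-pure extension at coordinates below $\gamma$ — looks like a Prikry-type forcing whose direct-extension order is highly closed (closure roughly $\kappa_\beta$, hence well above where reflection will be tested if we set things up with $\mu$ computed from the top of the iteration) together with a Cohen-type component governed by $\vec H$. The key structural facts I would extract are: (i) the forcing relevant to adding and evaluating the stationary sets $S_i$ factors as $R_{\text{small}} \times R_{\text{tail}}$ where $R_{\text{tail}}$ is $\mathrm{cf}(\delta)^+$-closed for the candidate reflection cofinalities, so it adds no new clubs through $\delta$; and (ii) $R_{\text{small}}$ is small relative to $\theta_\gamma$. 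Then I would use the indestructible supercompactness of $\theta_\gamma$ in $M_\gamma$ (this is preserved after $j_{0,\gamma}$, and the relevant small forcing is below $\theta_\gamma$, so indestructibility survives): take a supercompact embedding $\pi : M_\gamma[\cdots] \to P$ with critical point $\theta_\gamma$ and closure past $\mu$, so that $\pi``\mu$ has a sup $\delta^* < \pi(\mu)$ of the right cofinality, and argue that each $\pi(S_i)$ reflects at $\delta^*$ because $S_i = \pi(S_i)\cap\delta^*$ up to the club filter and $R_{\text{tail}}$ is too closed to kill the stationarity below $\delta^*$. Pulling back along $\pi$ via elementarity yields a genuine reflection point for $\langle S_i\mid i\in I\rangle$ in $M_\eta[G^*]$.

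The step I expect to be the main obstacle is the bookkeeping that makes (i) and (ii) literally true inside $M_\eta[G^*]$ rather than in some auxiliary model: one must verify that the generic $G^*$ over $M_\eta$ really does split, over the fixed stage $\gamma$, into a piece absorbed by a small forcing over $M_\gamma$ (where indestructibility is available) and a sufficiently closed tail that the supercompactness embedding can be lifted and that no club subset of the candidate $\delta^*$ is added by the tail. This requires carefully using Lemma \ref{Lem-meetdense} (to reduce membership in a dense set to the Cohen part plus finitely many one-point extensions), the $F$-invariance Remark \ref{F-invariance} and the surgery/Easton-lemma arguments already set up, and the Mitchell-order commutativity (Lemmas \ref{lemma: commutative}, \ref{lemma: cofinal generators}) to commute the embedding past the lower extenders. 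A secondary subtlety is checking that the reflection cofinality $\mathrm{cf}(\delta^*)$ can be chosen below $\theta_\gamma$ yet above everything that the small forcing could interfere with, so that the closure of the tail forcing genuinely dominates it; this is where the hypothesis $\sup_{\beta<\alpha}\kappa_\beta < \theta_\alpha < \kappa_\alpha$ for every $\alpha$ is used, since it guarantees such a $\theta_\gamma$ exists strictly between consecutive relevant thresholds for every choice of $\gamma$ between $\beta$ and $\eta$.
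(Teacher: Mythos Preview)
Your proposal has a genuine gap in the direction of the stationarity argument. You want to apply a supercompactness embedding $\pi$ with critical point $j_\gamma(\theta_\gamma)$, living in an auxiliary model such as $M_\gamma[G^*_\gamma \times \vec H^\gamma_{\geq\gamma}]$, and conclude that each $\pi(S_i)$ reflects at $\delta^* = \sup\pi``\mu$. For this you need the $S_i$ to be stationary \emph{in that auxiliary model}, not merely in $M_\eta[G^*]$. But $M_\eta[G^*]$ is \emph{contained in} $M_\gamma[G^*_\gamma \times \vec H^\gamma_{\geq\gamma}]$ (it is the intersection over all $\beta$), it is not a forcing extension of it; there is no ``tail forcing'' $R_{\text{tail}}$ going in the direction you need. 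The passage from $M_\gamma[\cdots]$ down to $M_\eta[G^*]$ is an iterated ultrapower followed by a different forcing, and there is no reason a set stationary in the smaller model must remain stationary in the larger one. So the step ``$R_{\text{tail}}$ is too closed to kill the stationarity below $\delta^*$'' does not make sense as stated, and this is not a bookkeeping issue but a structural one.

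The paper's proof addresses exactly this by a dichotomy you are missing. One pulls the $S_i$ back along $j_{\alpha,\eta}$ to sets $T_i^\alpha \subseteq j_\alpha(\bar\kappa_\eta^+)$ for each $\alpha$ above a threshold $\bar\alpha$. If for some $\alpha$ all the $T_i^\alpha$ are stationary in $M_\alpha[G^*_\alpha \times \vec H^\alpha_{\geq\alpha}]$, then the indestructible supercompact $j_\alpha(\theta_\alpha)$ there gives a common reflection point $\delta$ for the $T_i^\alpha$, and $j_{\alpha,\eta}(\delta)$ is then shown to be a reflection point for the $S_i$ in $M_\eta[G^*]$ (using continuity of $j_{\alpha,\eta}$ at $\delta$). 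If instead for every $\alpha$ some $T_{i_\alpha}^\alpha$ is nonstationary, one takes witnessing clubs $C_\alpha$, approximates them by clubs definable from $\vec H^\alpha/I_\eta$, pushes these forward to $M_\eta[G^*]$ via $j_{\alpha,\eta}$, and uses the $\kappa_0$-closure of $M_\eta[G^*]$ to intersect the resulting $\eta$-sequence of clubs. Fixing an unbounded $J\subseteq\eta$ on which $i_\alpha = i^*$ is constant, the intersection is a club disjoint from $S_{i^*}$, contradicting stationarity. Your outline contains only a version of the first branch of this dichotomy and no mechanism for the second.
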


We start by proving a stationary reflection fact that will be used as an
intermediate step in the proof.

\begin{claim} For every $\alpha <\eta$, every collection of fewer than $\eta$
many stationary subsets of $j_{\alpha}(\bar{\kappa}_{\eta}^+)$ with cofinalities
bounded by $\sup_{\beta<\alpha}\kappa_{\beta}^\alpha$ reflects in $M_{\alpha}[G^*_\alpha \times
\vec{H}^\alpha_{\geq\alpha}]$. \end{claim}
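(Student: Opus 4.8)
The plan is to reduce the claim to the supercompactness of the cardinals $\theta_\beta$ that sit below $\kappa_\beta$ for $\beta < \alpha$, using the product factorization $M_\alpha[G^*_\alpha \times \vec H^\alpha_{\geq\alpha}]$ and an argument by induction on $\alpha$. Fix $\alpha < \eta$ and a collection $\{S_i \mid i \in I\}$ of fewer than $\eta$ stationary subsets of $j_\alpha(\bar\kappa_\eta^+)$, each concentrating on a fixed cofinality (or cofinalities) below $\sigma := \sup_{\beta<\alpha}\kappa_\beta^\alpha$. Since $|I| < \eta \le \kappa_0$ and $\sigma$ is a limit of the $\kappa_\beta^\alpha$, there is a single $\beta < \alpha$ with $|I| < \kappa_\beta^\alpha$ such that all the $S_i$ concentrate on cofinalities below $\theta_\beta^\alpha$, where $\theta_\beta$ is the indestructible supercompact between $\sup_{\gamma<\beta}\kappa_\gamma$ and $\kappa_\beta$; note $j_\alpha$ moves $\theta_\beta$ to $j_\alpha(\theta_\beta) = \theta_\beta^\alpha$ (the critical point of the iteration is $\kappa_0 > \theta_0$ is not the issue here — we need $\theta_\beta < \kappa_0$ only when $\beta=0$; more carefully, $\crit(j_{\gamma,\alpha}) = \kappa_\gamma^\gamma$, so $\theta_\beta$ below the first extender applied after stage where it matters is fixed, and otherwise we track its image). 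The key point is that in $M_\alpha[G^*_\alpha \times \vec H^\alpha_{\geq\alpha}]$ the relevant supercompact survives: $\vec H^\alpha_{\geq\alpha}$ is generic for a $\kappa_\alpha^\alpha$-closed (hence highly closed above $\theta_\beta^\alpha$) Cohen-type forcing, and $G^*_\alpha$ is generic for $j_\alpha(\po\restriction\beta)$-type forcing whose relevant part below $\theta_\beta^\alpha$ is small, so by indestructibility $\theta_\beta^\alpha$ remains supercompact after a suitable further preparation, or at least remains $\bar\kappa_\eta^{+}$-supercompact in the relevant sense.

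The second step is the standard supercompactness-to-reflection argument of Magidor, adapted as in \cite{HU, magidor}: let $j : M_\alpha[G^*_\alpha\times\vec H^\alpha_{\geq\alpha}] \to W$ witness $\mu$-supercompactness of $\theta_\beta^\alpha$ for $\mu = |j_\alpha(\bar\kappa_\eta^+)|$, arising from a measure derived from a supercompactness embedding in $M_\alpha$ lifted through the forcing using closure and indestructibility. Then $\delta := \sup j`` j_\alpha(\bar\kappa_\eta^+)$ is an ordinal of cofinality $\theta_\beta^\alpha > |I|$, and for each $i$, since $S_i$ is stationary and concentrates on cofinalities below $\theta_\beta^\alpha = \crit(j)$, a standard computation shows $j(S_i) \cap \delta$ is stationary in $\delta$ in $W$: every club $C \subseteq \delta$ in $W$ comes from (or is contained in the diagonal intersection of) clubs that pull back, and the closure of $W$ under $\mu$-sequences together with $\cf^{W}(\delta) = \theta_\beta^\alpha$ forces a point of $j(S_i)$ into $C$ of the right cofinality (the cofinality is preserved because it is $<\crit(j)$). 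Since $|I| < \crit(j)$, $j$ fixes the enumeration of $\{S_i\}$ pointwise in the appropriate sense, so $\{j(S_i)\}$ reflects simultaneously at $\delta$ in $W$; by elementarity, $\{S_i\}$ reflects simultaneously at some ordinal below $j_\alpha(\bar\kappa_\eta^+)$ in $M_\alpha[G^*_\alpha\times\vec H^\alpha_{\geq\alpha}]$.

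The main obstacle I anticipate is verifying that a genuine supercompactness embedding for $\theta_\beta^\alpha$ is available in the model $M_\alpha[G^*_\alpha\times\vec H^\alpha_{\geq\alpha}]$: the forcing $j_\alpha(\po\restriction\beta)$ lives in $M_\alpha$ and is not literally a small forcing relative to $\theta_\beta^\alpha$ (its Cohen parts add subsets of cardinals below $\theta_\beta^\alpha$), so one must argue that its factor below $\theta_\beta^\alpha$ is absorbed by the preparatory forcing witnessed by indestructibility, and that the tail factors are sufficiently closed to lift a supercompactness embedding. This is exactly where the hypothesis that there is an indestructible supercompact $\theta_\alpha$ strictly between $\sup_{\beta<\alpha}\kappa_\beta$ and $\kappa_\alpha$ for \emph{each} $\alpha$ is used: one chooses the Laver preparation so that $\theta_\beta$ is indestructible under $\le\theta_\beta$-directed-closed forcing, observes that $\vec H^\alpha_{\geq\alpha}$ and the tail of $G^*_\alpha$ above $\theta_\beta^\alpha$ are suitably directed-closed, and handles the part of $G^*_\alpha$ below $\theta_\beta^\alpha$ by a reflection/absorption argument. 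I would also take care with the iteration bookkeeping: since $\crit(j_{\gamma,\alpha}) = \kappa_\gamma^\gamma$, the cardinal $\theta_\beta$ (which lies below $\kappa_\beta < \kappa_\gamma^\gamma$ for $\gamma > \beta$) is moved only by $j_{\beta,\beta+1}$ if at all, and in fact $\theta_\beta < \kappa_\beta \le \crit(j_{\beta,\beta+1}) = \kappa_\beta^\beta$ shows $\theta_\beta$ is fixed by the whole iteration when $\beta \geq 1$ relative to stage $\beta$, so $j_\alpha(\theta_\beta)$ is computed transparently, and its supercompactness in $M_\alpha$ follows from that in $V=M_0$ by elementarity of $j_{0,\alpha}$ (using that $j_{0,\beta}$ has the relevant width and $\theta_\beta$ survives the extender ultrapowers, which is where "there is a supercompact between $\sup_{\gamma<\alpha}\kappa_\gamma$ and $\kappa_\alpha$" combined with the Mitchell-order coherence of the $E_\alpha$ is needed).
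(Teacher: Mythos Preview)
Your approach has a genuine gap: you chose the wrong supercompact cardinal. You attempt to use $\theta_\beta$ for some $\beta < \alpha$, but $\theta_\beta < \kappa_\beta \leq \bar\kappa_\alpha$, so $j_\alpha(\theta_\beta) < j_\alpha(\bar\kappa_\alpha)$. The forcing for which $G^*_\alpha$ is generic is $j_\alpha(\mathbb{P}_{\vec{E}\restriction\alpha})$ (not $j_\alpha(\mathbb{P}\restriction\beta)$ as you write), and this forcing has coordinates $\gamma \in [\beta,\alpha)$ carrying genuine Prikry-type components at cardinals $\kappa_\gamma^\alpha > j_\alpha(\theta_\beta)$. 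These components are neither small relative to $j_\alpha(\theta_\beta)$ nor directed-closed above it (they singularize cardinals), so there is no way to lift a supercompactness embedding with critical point $j_\alpha(\theta_\beta)$ through $G^*_\alpha$; your suggestion that ``the tail of $G^*_\alpha$ above $\theta_\beta^\alpha$'' is ``suitably directed-closed'' is simply false for this forcing. (There is also a secondary issue: when $\alpha = \beta_0+1$ is a successor, the cofinalities are only assumed to be below $\kappa_{\beta_0}^\alpha$, and may well lie above $\theta_{\beta_0}^\alpha$, so no $\beta<\alpha$ with the property you require need exist.)

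The paper's argument fixes this by using $\theta_\alpha$ rather than any $\theta_\beta$ with $\beta<\alpha$. By hypothesis $\theta_\alpha \in (\bar\kappa_\alpha,\kappa_\alpha)$, so $j_\alpha(\theta_\alpha) > j_\alpha(\bar\kappa_\alpha^{++})$ and automatically bounds all the relevant cofinalities. Now the two generics factor cleanly: $\vec H^\alpha_{\geq\alpha}$ is generic for $(\kappa_\alpha^\alpha)^+$-directed-closed forcing, so by indestructibility $j_\alpha(\theta_\alpha)$ remains $j_\alpha(\bar\kappa_\eta^+)$-supercompact in $M_\alpha[\vec H^\alpha_{\geq\alpha}]$; and $j_\alpha(\mathbb{P}_{\vec{E}\restriction\alpha})$ is $j_\alpha(\bar\kappa_\alpha^{++})$-cc, which is below $j_\alpha(\theta_\alpha)$, so the supercompactness embedding lifts through $G^*_\alpha$ by the standard small-forcing argument. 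The placement of each $\theta_\alpha$ strictly above $\bar\kappa_\alpha$---not merely the existence of cofinally many supercompacts below $\kappa$---is exactly what makes this work, and is the reason the hypothesis of Theorem~\ref{mainthm} demands a supercompact in each interval $(\bar\kappa_\alpha,\kappa_\alpha)$.
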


\begin{proof}  Let $T_i$ for $i<\mu$ be such a collection of stationary sets.
By elementarity, $j_\alpha(\theta_\alpha)$ is an indestructible supercompact
cardinal between $\sup_{\beta<\alpha}\kappa^\alpha_\beta$ and
$\kappa_\alpha^\alpha$.   Recall that $\vec{H}^\alpha_{\geq\alpha}$ is generic
for $(\kappa_\alpha^\alpha)^+$-directed closed forcing.  By the
indestructibility of $j_\alpha(\theta_\alpha)$, there is a
$j_\alpha(\bar{\kappa}_{\eta}^+)$-supercompact embedding
$k\colon M_{\alpha}[\vec{H}^\alpha_{\geq\alpha}] \to N$.  Further, $G^*_\alpha$ is
generic for $j_\alpha(\bar{\kappa}_\alpha^{++})$-cc forcing and
$j_{\alpha}(\bar{\kappa}_\alpha^{++}) < j_\alpha(\theta_\alpha)$.  By standard
arguments, we can extend $k$ to include $M_\alpha[G^*_\alpha \times
\vec{H}^\alpha_{\geq\alpha}]$ in the extension by a
$j_{\alpha}(\bar{\kappa}_\alpha^{++})$-cc forcing.  Each $T_i$ remains
stationary in this extension, so $\{k(T_i) \mid i < \mu \}$ reflects at $\sup
k``j_\alpha(\bar{\kappa}_{\eta}^+)$. 
 It follows that $\{T_i \mid i < \mu \}$ reflects in $M_\alpha[G^*_\alpha \times \vec{H}^\alpha_{\geq\alpha}]$.
\end{proof}

We begin the proof of Theorem \ref{gstar-reflection}.  Suppose that for each
$i<\mu$, $S_i \in M_{\eta}[G^*]$ is a stationary subset of
$j_{\eta}(\bar{\kappa}_{\eta}^+)$.  We can assume that for all $i$, the
cofinality of each ordinal in $S_i$ is some fixed $\gamma_i$.  It follows that
there is some $\bar{\alpha}<\eta$ such that $\sup_{i<\mu} \gamma_i <
\sup_{\beta<\bar{\alpha}}\kappa_{\beta}^{\bar{\alpha}}$.  For each $\alpha$ in the interval
$[\bar{\alpha},\eta)$, let $T_i^\alpha = \{ \delta <
j_\alpha(\bar{\kappa}_{\eta}^+) \mid j_{\alpha,\eta}(\delta) \in S_i\}$.

\begin{claim} For $\alpha \geq \bar{\alpha}$, if $\{T_i^{\alpha} \mid
i<\mu\}$ reflects at an ordinal of cofinality less than
$\kappa_\alpha^\alpha$ in $M_\alpha[G_\alpha^* \times
\vec{H}^\alpha_{\geq\alpha}]$, then $\{S_i \mid i<\mu\}$ reflects in
$M_{\eta}[G^*]$. \end{claim}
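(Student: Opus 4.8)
I want to show that reflection of the pulled‑back family $\{T_i^\alpha \mid i<\mu\}$ at some ordinal $\delta$ of cofinality $<\kappa_\alpha^\alpha$ in $M_\alpha[G^*_\alpha\times\vec H^\alpha_{\geq\alpha}]$ transfers up the iteration to reflection of $\{S_i\mid i<\mu\}$ at $j_{\alpha,\eta}(\delta)$ in $M_\eta[G^*]$. The first step is to recall why the intermediate model is the right place to look: by the proposition preceding Section \ref{reflection}, $M_\eta[G^*]$ is computed as $\bigcap_{\beta<\eta}M_\beta[G^*_\beta\times\vec H^\beta_{\geq\beta}]$, and $j_{\alpha,\eta}$ is, modulo this generic bookkeeping, the continuation of the iteration by the extenders $E^\alpha_\beta$ for $\beta\in[\alpha,\eta)$. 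The crucial point is that $\crit(j_{\alpha,\eta})=\kappa_\alpha^\alpha$, so an ordinal $\delta<j_\alpha(\bar\kappa_\eta^+)$ of cofinality $<\kappa_\alpha^\alpha$ is fixed by $j_{\alpha,\eta}$ together with a cofinal subset of it — that is, $j_{\alpha,\eta}(\delta)=\delta$ (or at worst $j_{\alpha,\eta}``\delta$ is cofinal in $j_{\alpha,\eta}(\delta)$ and $j_{\alpha,\eta}$ restricted to a club below $\delta$ is the identity), which is exactly what makes stationarity of $T_i^\alpha\cap\delta$ transfer to stationarity of $S_i\cap j_{\alpha,\eta}(\delta)$.

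**The main steps, in order.** First I would fix $\delta$ as given and observe, using $\cf(\delta)<\kappa_\alpha^\alpha=\crit(j_{\alpha,\eta})$, that $\sup j_{\alpha,\eta}``\delta=j_{\alpha,\eta}(\delta)$ and that $j_{\alpha,\eta}$ is continuous at $\delta$; hence a club $C\subseteq\delta$ maps to a set $j_{\alpha,\eta}``C$ whose closure is a club in $j_{\alpha,\eta}(\delta)$, and in fact since the critical point is above $\cf(\delta)$ the embedding can be taken to be the identity below $\delta$ on a tail, so $j_{\alpha,\eta}(\delta)=\delta$. Second, I would show that if $\delta$ witnesses reflection of $T_i^\alpha$ in $M_\alpha[G^*_\alpha\times\vec H^\alpha_{\geq\alpha}]$ then $\delta$ witnesses reflection of $S_i$ in $M_\eta[G^*]$: stationarity is upward absolute here because $M_\eta[G^*]$ is an \emph{inner} model of $M_\alpha[G^*_\alpha\times\vec H^\alpha_{\geq\alpha}]$ (this is the content of the intersection proposition, with the extra forcing $\vec H^\alpha_{\geq\alpha}$ being sufficiently closed not to add clubs to $\delta$), and by the definition $T_i^\alpha=\{\xi\mid j_{\alpha,\eta}(\xi)\in S_i\}$ we get $T_i^\alpha\cap\delta=S_i\cap\delta$ once $j_{\alpha,\eta}\uhr\delta$ is the identity. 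Third, I would note that any club $D$ of $\delta$ in $M_\eta[G^*]$ is also a club in the larger model, so meeting $T_i^\alpha\cap\delta$ there gives a point of $S_i\cap\delta\cap D$; hence $S_i\cap\delta$ is stationary in $M_\eta[G^*]$ for every $i<\mu$ simultaneously.

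**Where the difficulty lies.** The routine part is the ordinal arithmetic with the critical point; the part that needs care is the \emph{absoluteness of stationarity} between $M_\alpha[G^*_\alpha\times\vec H^\alpha_{\geq\alpha}]$ and $M_\eta[G^*]$. One has to argue that no club subset of $\delta$ present in the bigger model is missed — equivalently that the bigger model does not "see" fewer clubs of $\delta$ than it should. Here the key inputs are that $\vec H^\alpha_{\geq\alpha}$ is $(\kappa_\alpha^\alpha)^+$-closed (so, as $\cf(\delta)<\kappa_\alpha^\alpha$, it adds no new $\omega_1$- or longer sequences cofinal in $\delta$, in particular no new clubs), and that $G^*_\eta$ relative to $G^*_\alpha$ is added by forcing that is suitably distributive below $\kappa_\alpha^\alpha$, together with the intersection characterization of $M_\eta[G^*]$. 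I expect this distributivity/closure bookkeeping to be the one genuine obstacle; everything else is a direct elementarity-and-critical-point computation.
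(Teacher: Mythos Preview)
Your argument contains a genuine error: you claim that because $\cf(\delta)<\kappa_\alpha^\alpha=\crit(j_{\alpha,\eta})$, the embedding $j_{\alpha,\eta}$ is the identity below $\delta$ on a tail and hence $j_{\alpha,\eta}(\delta)=\delta$. This is false. The reflection point $\delta$ lives in $j_\alpha(\bar\kappa_\eta^+)$, which is far above $\kappa_\alpha^\alpha$; only the \emph{cofinality} of $\delta$ is below the critical point, not $\delta$ itself. So $j_{\alpha,\eta}$ typically moves $\delta$ and moves most ordinals below it, and your identification $T_i^\alpha\cap\delta=S_i\cap\delta$ is simply wrong. What the cofinality hypothesis does buy you is continuity: $j_{\alpha,\eta}``\delta$ is cofinal in $j_{\alpha,\eta}(\delta)$, and $\cf(j_{\alpha,\eta}(\delta))=\cf(\delta)$.

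The paper's proof uses exactly this continuity, but in the opposite direction from what you propose. Rather than pushing a club forward, it takes a club $D\subseteq j_{\alpha,\eta}(\delta)$ in $M_\eta[G^*]$ of order type $\cf(\delta)$ and pulls it back: $E=\{\gamma<\delta\mid j_{\alpha,\eta}(\gamma)\in D\}$ is then a club in $\delta$, and it lies in $M_\alpha[G^*_\alpha\times\vec H^\alpha_{\geq\alpha}]$ because both $D$ and $j_{\alpha,\eta}$ are available there. Since $T_i^\alpha\cap\delta$ is stationary in that model, pick $\gamma\in T_i^\alpha\cap E$; then $j_{\alpha,\eta}(\gamma)\in S_i\cap D$ by the very definition of $T_i^\alpha$. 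No distributivity or closure bookkeeping is needed: the only absoluteness used is that $M_\eta[G^*]\subseteq M_\alpha[G^*_\alpha\times\vec H^\alpha_{\geq\alpha}]$, so any club of $M_\eta[G^*]$ is visible in the larger model. Your ``where the difficulty lies'' paragraph thus misidentifies the obstacle; once the pull-back idea is in place the argument is a two-line elementarity computation.
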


\begin{proof} Let $\delta < j_\alpha(\bar{\kappa}_{\eta}^+)$ with
$\cf(\delta) < \kappa_\alpha^\alpha$ be a common reflection point of
the collection $\{T_i^\alpha \mid i<\mu \}$.  We claim that each $S_i$ reflects at
$j_{\alpha,\eta}(\delta)$.  Let $D \subseteq
j_{\alpha,\eta}(\delta)$ be club in $M_{\eta}[G^*]$, of order type $\cf(\delta)
= \cf(j_{\alpha,\eta}(\delta))$.

Since $\crit{j_{\alpha,\eta}} > \cf(\delta)$, $j_{\alpha,\eta}$ is continuous
at $\delta$ and $E = \{\gamma < \delta \mid j_{\alpha,\eta}(\gamma) \in D
\}$ is a club in $\delta$.  Note that $E \in M_{\alpha}[G^*_\alpha \times
\vec{H}^{\alpha}_{\geq\alpha}]$.  Since $T_i^\alpha$ reflects at $\delta$,
$T_i^\alpha \cap E \neq \emptyset$ and hence $S_i \cap D \neq
\emptyset$.\end{proof}

Combining the previous two claims if for some $\alpha \geq \bar{\alpha}$,
$\{T_i^\alpha \mid i < \mu \}$ consists of stationary sets, then $\{S_i \mid
i<\mu\}$ reflects.  So we assume for a contradiction that for each
$\alpha \geq \bar{\alpha}$, there are $i_\alpha<\mu$ and a club $C_\alpha
\in M_\alpha[G_\alpha^* \times \vec{H}^\alpha_{\geq\alpha}]$ such that
$T_{i_\alpha}^\alpha \cap C_\alpha = \emptyset$.  We fix $J \subseteq \eta$
unbounded and $i^*<\mu$ such that for all $\alpha \in J$, $i_\alpha = i^*$.

Let $I_\eta$ be the ideal of bounded subsets of $\eta$.  For $\alpha \leq \eta$,
let $\vec{H}^\alpha/I_\eta$ be the generic for
$j_\alpha(\vec{\mathbb{A}})/I_\eta$ derived from $j_\alpha``\vec{H}$.  Recall
that $\vec{H}$ is generic for $\vec{\mathbb{A}}$ which is a product of Cohen
posets, so this makes sense.

\begin{claim} For $\alpha \geq \bar{\alpha}$, there is a club subset of
$C_\alpha$ in $M_\alpha[\vec{H}^\alpha/I_\eta]$.  \end{claim}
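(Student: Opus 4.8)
We need to show that the club $C_\alpha \in M_\alpha[G_\alpha^* \times \vec{H}^\alpha_{\geq\alpha}]$ contains a club lying in the smaller model $M_\alpha[\vec{H}^\alpha/I_\eta]$. The idea is that $C_\alpha$ is a club in an ordinal $\delta < j_\alpha(\bar\kappa_\eta^+)$ of cofinality less than $\kappa_\alpha^\alpha$ (indeed, the failure case we are in concerns reflection at ordinals of cofinality $< \kappa_\alpha^\alpha$; the relevant clubs witnessing non-reflection of $T_{i^*}^\alpha$ live below $j_\alpha(\bar\kappa_\eta^+)$ and are constructed inside $M_\alpha[G_\alpha^*\times\vec{H}^\alpha_{\geq\alpha}]$). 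The forcing $G_\alpha^* \times \vec{H}^\alpha_{\geq\alpha}$ that produces $C_\alpha$ on top of $M_\alpha[\vec{H}^\alpha/I_\eta]$ should be sufficiently closed that it cannot add new clubs of small cofinality to ordinals in that range — or more precisely, any club it adds contains an old club.

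First I would identify the relevant quotient forcing. We have the three models $M_\alpha[\vec{H}^\alpha/I_\eta] \subseteq M_\alpha[\vec{H}^\alpha_\beta \text{-type data}] \subseteq M_\alpha[G_\alpha^* \times \vec{H}^\alpha_{\geq\alpha}]$; I would show that passing from the first to the last is done by a forcing that is $(\sup_{\beta<\alpha}\kappa_\beta^\alpha)^+$-closed, or at least $\mu^+$-closed for every cofinality $\mu < \kappa_\alpha^\alpha$ that could be relevant. The point is that $\vec{H}^\alpha/I_\eta$ already captures all the ``Cohen noise'' coming from coordinates cofinally in $\eta$; what remains — the Prikry-type forcing $G_\alpha^*$ built from $j_\alpha(\po)$ and the tail $\vec{H}^\alpha_{\geq\alpha}$ — is highly closed below $\kappa_\alpha^\alpha$, since $\vec{\A}_{\geq\alpha}$ is $\kappa_\alpha^+$-closed and the direct-extension order of $j_\alpha(\po_{\vec{E}\uhr\alpha})$ is $\sup_{\beta<\alpha}\kappa_\beta$-closed in the appropriate sense. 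The one-point extensions that $G_\alpha^*$ adds happen at coordinates $\beta < \alpha$ and change the situation by amounts of size $< \sup_{\beta<\alpha}\kappa_\beta^\alpha$, which is still below $\kappa_\alpha^\alpha$, so a standard pressing-down / fusion argument shows no club of cofinality below $\kappa_\alpha^\alpha$ is ``essentially'' new.

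The key step, then, is: given a club $C_\alpha$ of order type $\cf(\delta) < \kappa_\alpha^\alpha$ in the big model, work in the ground model $M_\alpha[\vec{H}^\alpha/I_\eta]$ and build, by induction on $\xi < \cf(\delta)$, a continuous increasing sequence of ordinals $\langle \gamma_\xi \mid \xi < \cf(\delta)\rangle$ together with a descending sequence of conditions forcing $\gamma_\xi \in \dot C_\alpha$; using the closure of the quotient forcing (closure $> \cf(\delta)$, valid since $\cf(\delta) < \kappa_\alpha^\alpha$ and the quotient is at least $(\sup_{\beta<\alpha}\kappa_\beta^\alpha)^+$-closed — and we may assume $\cf(\delta) \le \sup_{\beta<\alpha}\kappa_\beta^\alpha$ by our earlier choice of $\bar\alpha$) the induction goes through at limits, and the resulting club $\{\gamma_\xi \mid \xi < \cf(\delta)\}$ lies in $M_\alpha[\vec{H}^\alpha/I_\eta]$ and is contained in $C_\alpha$. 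I expect the main obstacle to be bookkeeping the exact decomposition of $G_\alpha^* \times \vec{H}^\alpha_{\geq\alpha}$ over $M_\alpha[\vec{H}^\alpha/I_\eta]$ and verifying that the relevant quotient really is closed enough — in particular handling the non-pure (one-point extension) part of $G_\alpha^*$, which is not literally closed but only ``Prikry-closed,'' so one must see that its finitely many one-point extensions live below $\kappa_\alpha^\alpha$ and hence do not interfere with a fusion of length $\cf(\delta)$. Once that decomposition and closure are pinned down, the club-catching argument is routine.
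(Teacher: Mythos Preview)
You have misread what $C_\alpha$ is. In the setup preceding this claim, the contradiction hypothesis is that each $T_{i_\alpha}^\alpha$ is \emph{nonstationary} in $M_\alpha[G_\alpha^*\times\vec{H}^\alpha_{\geq\alpha}]$; the club $C_\alpha$ is a witness to this, i.e.\ a club in the full regular cardinal $j_\alpha(\bar\kappa_\eta^+)$ disjoint from $T_{i_\alpha}^\alpha$. It is not a club in some small ordinal $\delta$ of cofinality below $\kappa_\alpha^\alpha$. Consequently your entire strategy --- use closure of the quotient to run an induction of length $\cf(\delta)<\kappa_\alpha^\alpha$ and catch a cofinal subsequence of $C_\alpha$ --- cannot work: the club you must approximate has order type $j_\alpha(\bar\kappa_\eta^+)$, far beyond any closure available in the quotient. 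Moreover, the quotient over $M_\alpha[\vec{H}^\alpha/I_\eta]$ genuinely involves the Prikry-type part $G_\alpha^*$, which is not closed in the relevant sense, a difficulty you note but do not resolve.

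The paper's argument is entirely different and rests on \emph{chain condition}, not closure. For each $\beta$ with $\alpha<\beta<\eta$, the extension $M_\alpha[G_\alpha^*\times\vec{H}^\alpha_{\geq\alpha}]$ is a $(\bar\kappa_\beta^\alpha)^{++}$-c.c.\ extension of $M_\alpha[\vec{H}^\alpha\uhr[\beta,\eta)]$, and since $(\bar\kappa_\beta^\alpha)^{++}<j_\alpha(\bar\kappa_\eta^+)$, the standard closure-points-of-the-successor-function argument produces a club $C_{\alpha,\beta}\subseteq C_\alpha$ lying in $M_\alpha[\vec{H}^\alpha\uhr[\beta,\eta)]$. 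These clubs decrease as $\beta$ increases, and the key observation is that their intersection $\bigcap_{\beta>\alpha}C_{\alpha,\beta}$ is definable in $M_\alpha[\vec{H}^\alpha/I_\eta]$: an ordinal $\gamma$ belongs to it iff some condition whose $I_\eta$-class lies in $\vec{H}^\alpha/I_\eta$ has all sufficiently long tails forcing $\gamma\in C_{\alpha,\beta}$. This intersection is the desired club.
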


\begin{proof} We start by showing that for all $\beta > \alpha$ below $\eta$
there is a club subset of $C_\alpha$ in $M_{\alpha}[\vec{H}^\alpha \restriction [\beta, \eta)]$.  

To see this, note
that $M_\alpha[G^*_\alpha \times \vec{H}^\alpha_{\geq \alpha}]$ is
$(\bar{\kappa}^\alpha_\beta)^{++}$-cc extension of $M_\alpha[\vec{H}^\alpha
\upharpoonright [\beta,\eta)]$ and
$\bar{\kappa}_\beta^\alpha<j_\alpha(\bar{\kappa}_{\eta})$. Indeed, this forcing decomposes into a part contributing $G^*_\alpha$ which have small cardinality and the product of forcing notions $j_{\alpha}(\mathcal{A}(\lambda, \kappa_\gamma))$ for $\gamma < \beta$, whose product have $(\bar\kappa_\beta^\alpha)^{++}$-cc. For the moment
we let $\mathbb{Q}_\beta$ denote the $(\bar{\kappa}^\alpha_\beta)^{++}$-cc poset
used in order to move from $M_\alpha[\vec{H}^\alpha\restriction[\beta,\eta)]$ to $M_\alpha[G_\alpha^* \times \vec{H}^\alpha_{\geq \alpha}]$.

We set take $C_{\alpha,\beta}$ to be the set of closure points of the function
assigning each $\gamma$ to the supremum of the set 
\[\{\gamma^* \mid \exists q\in \mathbb{Q}_\beta,\, q \Vdash \check{\gamma}^* = \min (\dot{C}_\alpha \setminus (\check\gamma + 1))\},\]
as computed in the model
$M_\alpha[\vec{H}^\alpha \upharpoonright [\beta,\eta)]$.  Clearly this is a
club.  Further if $\beta < \beta'$, then $C_{\alpha,\beta'} \subseteq
C_{\alpha,\beta}$.

Since the clubs are decreasing, $\bigcap_{\beta > \alpha} C_{\alpha,\beta}$ is
definable in $M_\alpha[\vec{H}^\alpha/I_\eta]$ as the set of ordinals $\gamma$ 
such that for some condition $\vec{a} \in j_\alpha(\vec{\mathbb{A}})$, $\vec{a}/I_\eta \in \vec{H}^\alpha / I_\eta$ and for all sufficiently
large $\beta$, $\vec{a} \upharpoonright [\beta,\eta)$ forces $\gamma \in
C_{\alpha,\beta}$.  So $\bigcap_{\beta>\alpha}C_{\alpha,\beta}$ is as required
for the claim. \end{proof}

Let $\dot{D}_\alpha$ be a $\vec{H}^\alpha/I_\eta$-name for the club from the
previous lemma.

\begin{lemma}
$\vec{H}^{\eta}/I_\eta \in M_{\eta}[G^*]$.
\end{lemma}
\begin{proof}
Let $p \in j_\eta(\vec(\mathbb{A}))$. We will show how to decide in $M_\eta[G^*]$ whether $p \in \vec{H}^\eta/I_\eta$ or not.

Now, fix $q \in G^*$ with $\dom q(\alpha) \supseteq \dom p(\alpha)$. Let us claim that $p \in \vec{H}/I_\eta$ if and only if there is $\beta$ such that $f^q(\gamma)\supseteq p(\gamma)$ for all $\gamma \in [\beta, \eta)$. 

Indeed, without loss of generality, $q = j_\eta(\bar{q}) ^\smallfrown \langle \nu_0,\dots, \nu_{m-1}\rangle$ for $\bar{q} \in \vec{H}$ and $\nu_0,\dots, \nu_{m-1}$ generators.

Thus, for every $\beta$ which is larger than the index of the measures of $\nu_0,\dots,\nu_{m-1}$ $f^{\bar{q}(\beta)} \in H(\beta)$ and thus after applying $j_\eta$ we conclude that the condition which is defined by $\langle f^q(\gamma) \mid \beta \leq \gamma < \eta\rangle$ belongs to $\vec{H}^\eta \restriction [\beta,\eta)$. Since it is compatible with $p$, $p \in \vec{H}^\eta/I_\eta$.

In the other direction, one can take $\beta$ large enough so that $p \restriction [\beta,\eta) \in \vec{H}^\eta \restriction [\beta,\eta)$ and then $p \restriction [\beta,\eta)$ is the Cohen part of a condition in $G^*$.    
\end{proof}
\begin{claim} $\langle
j_{\alpha,\eta}(\dot{D}_\alpha)_{\vec{H}^{\eta}/I_\eta} \mid \bar{\alpha} \leq
\alpha < \eta \rangle \in M_{\eta}[G^*]$. \end{claim}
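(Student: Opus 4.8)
The plan is to reduce the claim, in two steps, to the statement that the generic filter $\vec H^\eta/I_\eta$ itself belongs to $M_\eta[G^*]$, and then to derive that statement from the identity $\bigcap_{\beta<\eta}M_\beta[G^*_\beta\times\vec H^\beta_{\ge\beta}]=M_\eta[G^*]$ established above.

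For the first reduction, I would first note that the whole sequence $\langle (j_{\alpha,\eta}(\dot D_\alpha))_{\vec H^\eta/I_\eta}\mid\bar\alpha\le\alpha<\eta\rangle$ lies in $V[\vec H]$: the sequence $\langle \dot D_\alpha\mid \alpha\rangle$ of names is fixed in $V[\vec H]$, the iteration embeddings $j_{\alpha,\eta}$ belong to $V$, and $\vec H^\eta/I_\eta$ is by definition the filter on $j_\eta(\vec{\mathbb A})/I_\eta$ derived from $j_\eta``\vec H\in V[\vec H]$. Since this sequence has length $|[\bar\alpha,\eta)|<\eta<\kappa_0=\crit(j_\eta)$ and, by Proposition \ref{proposition: closure}, $M_\eta[G^*]$ is closed under $\kappa_0$-sequences of its elements taken in $V[\vec H]$, it will suffice to show that each single term $(j_{\alpha,\eta}(\dot D_\alpha))_{\vec H^\eta/I_\eta}$ lies in $M_\eta[G^*]$.

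For the second reduction, fix $\bar\alpha\le\alpha<\eta$. Since $\dot D_\alpha\in M_\alpha$ is a $j_\alpha(\vec{\mathbb A})/I_\eta$-name and $j_{\alpha,\eta}\colon M_\alpha\to M_\eta$ is elementary with critical point above $\eta$, the image $j_{\alpha,\eta}(\dot D_\alpha)$ is a $j_\eta(\vec{\mathbb A})/I_\eta$-name in $M_\eta$, so $(j_{\alpha,\eta}(\dot D_\alpha))_{\vec H^\eta/I_\eta}\in M_\eta[\vec H^\eta/I_\eta]$; hence it is enough to prove $\vec H^\eta/I_\eta\in M_\eta[G^*]$, i.e. $M_\eta[\vec H^\eta/I_\eta]\subseteq M_\eta[G^*]$. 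To prove this I would show that $\vec H^\eta/I_\eta$ lies in each $M_\beta[G^*_\beta\times\vec H^\beta_{\ge\beta}]$, $\beta<\eta$, and then intersect. Fixing $\beta$: $M_\eta$ is an iterated ultrapower of $M_\beta$ by the successive images of $\langle E_\gamma^\beta\mid\beta\le\gamma<\eta\rangle$ (cf. the discussion of finite sub-iterations in Section \ref{unctble}), so $M_\beta$, and any generic extension of it, defines the embedding $j_{\beta,\eta}$; and since every element of $\vec H^\beta_{\ge\beta}$ is a member of the set $j_\beta(\vec{\mathbb A}_{\ge\beta})\in M_\beta$, the set $j_{\beta,\eta}``\vec H^\beta_{\ge\beta}$ can be formed inside $M_\beta[\vec H^\beta_{\ge\beta}]$ by Replacement. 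Because $j_{\beta,\eta}\circ j_\beta=j_\eta$, this set contains $j_\eta``\vec H_{\ge\beta}$, and (using $\eta<\kappa_0$) for each $\gamma\in[\beta,\eta)$ the coordinate-$\gamma$ union of the latter is $\bigcup\{\,j_\eta(f)\mid f\in H(\gamma)\,\}=j_\eta(\bigcup H(\gamma))$; so $j_{\beta,\eta}``\vec H^\beta_{\ge\beta}$ generates the $M_\eta$-generic filter $\vec H^\eta\uhr[\beta,\eta)$. As $[0,\beta)\in I_\eta$, the quotient $\vec H^\eta/I_\eta$ is determined by this tail, whence $\vec H^\eta/I_\eta\in M_\beta[\vec H^\beta_{\ge\beta}]\subseteq M_\beta[G^*_\beta\times\vec H^\beta_{\ge\beta}]$. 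Since $\beta<\eta$ was arbitrary, $\vec H^\eta/I_\eta\in\bigcap_{\beta<\eta}M_\beta[G^*_\beta\times\vec H^\beta_{\ge\beta}]=M_\eta[G^*]$, completing both reductions.

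The step I expect to be the main obstacle is the verification in the previous paragraph: one must check carefully that the iteration embedding $j_{\beta,\eta}$ of $M_\beta$ really can be applied, coordinate by coordinate, to the generic object $\vec H^\beta_{\ge\beta}$ so as to produce, inside $M_\beta[\vec H^\beta_{\ge\beta}]$, a set generating the full coordinate-wise generic $\langle j_\eta(\bigcup H(\gamma))\mid\beta\le\gamma<\eta\rangle$, and hence determining $\vec H^\eta/I_\eta$ modulo the bounded ideal. This is of a piece with the analysis of $G^*$ and of the iteration already carried out in Section \ref{unctble}, in particular with the proof of Proposition \ref{proposition: closure}; but it is exactly where the hypothesis $\eta<\kappa_0$ and the high degree of closure of $\vec{\mathbb A}$ — which together ensure that pointwise images of the Cohen generics remain generic and cohere correctly along the iteration — genuinely enter.
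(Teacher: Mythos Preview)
Your proposal is correct and its overall skeleton matches the paper's: both arguments boil down to (a) showing $\vec H^\eta/I_\eta\in M_\eta[G^*]$ and (b) invoking the $\kappa_0$-closure of $M_\eta[G^*]$ from Proposition~\ref{proposition: closure} to assemble the $\eta$-sequence. The paper in fact applies closure to the sequence of \emph{names} $\langle j_{\alpha,\eta}(\dot D_\alpha)\mid\alpha<\eta\rangle$ (each of which lies in $M_\eta$) rather than to the sequence of interpretations, but this is a cosmetic difference.

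The one genuinely different step is your justification of (a). The paper simply says that $\vec H^\eta/I_\eta\in M_\eta[G^*]$ is ``clear from the definition of $G^*$'': every generator of $G^*$ has the form $j_\eta(p)\fr\vec\nu$ with $\vec f^{\,p}\in\vec H$, so at any pure coordinate $\alpha$ the Cohen part is $j_\eta(f^p_\alpha)$; reading these off over all generators recovers $j_\eta{}``H(\alpha)$ for all but finitely many $\alpha$, i.e.\ exactly $\vec H^\eta/I_\eta$. Your route instead passes through the intersection identity $\bigcap_\beta M_\beta[G^*_\beta\times\vec H^\beta_{\ge\beta}]=M_\eta[G^*]$, showing $\vec H^\eta/I_\eta\in M_\beta[\vec H^\beta_{\ge\beta}]$ for every $\beta$ by lifting $\vec H^\beta_{\ge\beta}$ along $j_{\beta,\eta}$. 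This is valid (the width/closure considerations you flag are exactly the right ones), but it is heavier machinery than needed: the direct observation from the shape of the generators of $G^*$ avoids the detour entirely and is what the paper intends by ``clear from the definition.''
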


\begin{proof}  By the previous lemma,
$\vec{H}^{\eta}/I_\eta \in M_{\eta}[G^*]$.  Further by Proposition
\ref{proposition: closure}, $M_{\eta}[G^*]$ is closed under $\eta$-sequences.  So
the sequence of names for clubs $\langle j_{\alpha,\eta}(\dot{D}_\alpha) \mid \alpha < \eta \rangle \in M_{\eta}[G^*]$.   \end{proof}

To get a contradiction it is enough to show that $\bigcap_{\bar{\alpha} \leq
\alpha<\eta} j_{\alpha}(\dot{D}_\alpha)_{\vec{H}^{\eta}/I_\eta} \cap
S_{i^*} = \emptyset$.  Suppose that there is some $\delta$ in the intersection.
We can find some $\alpha\in J$ and $\bar{\delta}$ such that
$j_{\alpha,\eta}(\bar{\delta}) = \delta$.  However, by the definitions of
$D_\alpha$ and $T_{i^*}^\alpha$, we must have that $\bar{\delta} \in C_\alpha
\cap T_{i^*}^\alpha$, a contradiction.

This completes the proof of Theorem \ref{mainthm}.

\section{Bad scales} \label{badscale}

In this section we give the proof of Theorem \ref{mainthm2}.  For this theorem
we work with the forcing $\po$ as before and assume that there is an
indestructibly supercompact cardinal $\theta < \kappa_0$.  Working in $V$, let
$\vec{f}$ be a scale of length $\bar{\kappa}_\eta^+$ in
$\prod_{\alpha<\eta}\kappa_\alpha^+$.  As before let $\vec{H}$ be
generic for $\vec{\A}$ over $V$ and let $G^*$ be the $j_\eta(\po)$ generic over
$M_\eta$ defined above.

\begin{lemma} In $M_\eta[G^*]$, $j_\eta(\vec{f})$ is a scale of length
$j_\eta(\bar{\kappa}_\eta^+)$ in $\prod_{\alpha<\eta}j(\kappa_\alpha)^+$.
\end{lemma}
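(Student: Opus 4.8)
The plan is to verify the two defining properties of a scale — that $j_\eta(\vec f)$ is $<^*$-increasing and that it is cofinal in $\prod_{\alpha<\eta} j_\eta(\kappa_\alpha)^+$ — in the model $M_\eta[G^*]$, where $<^*$ denotes eventual domination modulo the ideal of bounded subsets of $\eta$. Being increasing is elementary: $\vec f$ is increasing in $V$, hence $j_\eta(\vec f)$ is increasing in $M_\eta$, and forcing does not destroy the pointwise comparisons $j_\eta(\vec f)(\xi) <^* j_\eta(\vec f)(\xi')$ since those are statements about two fixed functions with domain $\eta$. So the real content is cofinality: given $g \in \prod_{\alpha<\eta} j_\eta(\kappa_\alpha)^+$ lying in $M_\eta[G^*]$, I must find $\xi < j_\eta(\bar\kappa_\eta^+)$ with $g <^* j_\eta(\vec f)(\xi)$.

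First I would reduce to the case where $g$ lies in a small submodel. By Proposition~\ref{proposition: closure}, $M_\eta[G^*]$ is closed under $\kappa_0$-sequences in $V[\vec H]$, so $g$ is essentially coded by such a sequence; but more to the point, I would use the factorization from the last proposition of Section~\ref{unctble}: for each $\beta<\eta$, $M_\eta[G^*]$ sits inside $M_\beta[G^*_\beta \times \vec H^\beta_{\geq\beta}]$, and in fact equals the intersection over all $\beta$. The key observation is that each tail function $g\restriction[\beta,\eta)$ is an element of $M_\beta[G^*_\beta \times \vec H^\beta_{\geq\beta}]$, and in that model the tail forcing $j_\beta(\vec{\A}_{\geq\beta})$ together with the ``bounded'' part of $G^*_\beta$ is highly closed — more precisely $\vec H^\beta_{\geq\beta}$ is generic for $(\kappa_\beta^\beta)^+$-closed forcing and $G^*_\beta$ is $\bar\kappa_\beta^{++}$-cc — so that individual values $g(\alpha) < j_\eta(\kappa_\alpha)^+ = j_\alpha(\kappa_\alpha)^+$ for $\alpha \geq \beta$ are added by forcing that is small relative to $\kappa_\alpha$ and does not change $\kappa_\alpha^+$ or add new bounded subsets of $\kappa_\alpha$.

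The main step is then: in $M_\beta[\text{appropriate generic}]$, pull $g\restriction[\beta,\eta)$ back along $j_\beta$. Since the relevant generic extension of $M_\beta$ does not add bounded subsets of $\bar\kappa_\beta^\beta$ beyond a small amount, and $\crit(j_\beta) = \kappa_0$ while the product $\prod_{\alpha \geq \beta} j_\beta(\kappa_\alpha)^+$ is computed correctly, I can find a function $g' \in (\prod_{\alpha<\eta}\kappa_\alpha^+)^V$ with $j_\beta(g') \restriction [\beta,\eta)$ dominating $g \restriction [\beta,\eta)$ on a tail — here one uses that $\vec H^\beta_{\geq\beta}$ and $G^*_\beta$ are generic for forcing small enough that, value-by-value, $g(\alpha)$ is below some $j_\alpha(\rho_\alpha)$ for $\rho_\alpha < \kappa_\alpha^+$ (this is where closure/chain condition and $\crit(j) = \kappa_0$ combine). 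Since $\vec f$ is a scale in $V$, pick $\xi < \bar\kappa_\eta^+$ with $g' <^* \vec f(\xi)$; then $j_\eta(g') <^* j_\eta(\vec f)(j_\eta(\xi))$ by elementarity, and combining with $g <^* j_\eta(g')$ on the tail $[\beta,\eta)$ we get $g <^* j_\eta(\vec f)(j_\eta(\xi))$, as the modification on the bounded part $[0,\beta)$ is absorbed into the ideal $I_\eta$.

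The hard part will be the bookkeeping in that middle step: verifying precisely that each coordinate $g(\alpha)$ for $\alpha$ in a tail lies below a point in the range of $j_\alpha$ on $\kappa_\alpha^+$, i.e. controlling how much the forcing $G^*_\beta \times \vec H^\beta_{\geq\beta}$ (and below it the full $j_\eta(\po)$) moves the value, and organizing the choice of $\beta$ uniformly so that the tail on which domination holds is cofinal in $\eta$. One must be careful that $G^*$ itself is not small forcing over $M_\eta$ — the point is rather to work level by level over the $M_\beta$'s, exploiting that $\kappa_\alpha^+$ is preserved and that $j_{\beta,\eta}$ has critical point above $\bar\kappa_\beta$, so the relevant scale values and clubs transfer. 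Once the coordinate-wise bound is in hand, the appeal to the scale property of $\vec f$ in $V$ and elementarity of $j_\eta$ is routine.
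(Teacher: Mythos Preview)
Your endgame is right: produce $\tilde g\in\big(\prod_{\alpha<\eta}\kappa_\alpha^+\big)^V$ with $g(\alpha)<j_\eta(\tilde g(\alpha))$ for all $\alpha$, pick $\zeta$ with $\tilde g<^* f_\zeta$ in $V$, and apply $j_\eta$. But the route you take to $\tilde g$ is both more complicated than necessary and contains a real slip. You repeatedly write that $g(\alpha)$ is bounded by $j_\alpha(\rho_\alpha)$ (and earlier by $j_\beta(g')(\alpha)$); taken literally this is false, since $g(\alpha)$ ranges up to $j_\eta(\kappa_\alpha)^+ = j_{\alpha+1}(\kappa_\alpha)^+$, which strictly exceeds $j_\alpha(\kappa_\alpha^+)$. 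The correct statement is that $g(\alpha)<j_\eta(\rho_\alpha)$ for some $\rho_\alpha<\kappa_\alpha^+$, and this has nothing to do with chain conditions or closure of the level-$\beta$ forcing: it is simply the fact that $\kappa_\alpha^+$ is a continuity point of $j_\eta$ (every element of $j_\eta(\kappa_\alpha^+)$ has the form $j_\eta(h)(\vec a)$ for $h\colon\prod_i\kappa_{\beta_i}\to\kappa_\alpha^+$ with $|\dom h|\le\kappa_\alpha$, hence lies below $j_\eta(\sup h)$).

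The paper's argument avoids the entire level-$\beta$ factorization. One observes $g\in M_\eta[G^*]\subseteq V[\vec H]$; by the continuity-point fact just stated, for each $\alpha$ there is $\gamma_\alpha<\kappa_\alpha^+$ with $g(\alpha)<j_\eta(\gamma_\alpha)$; since $\vec{\A}$ is $\kappa_0^+$-closed (in particular $\eta^+$-distributive), the sequence $\tilde g=\langle\gamma_\alpha\mid\alpha<\eta\rangle$ already lies in $V$; then pick $\zeta$ with $\tilde g<^* f_\zeta$ and conclude $g<^* j_\eta(\vec f)_{j_\eta(\zeta)}$. So the intersection formula, the small-forcing analysis over $M_\beta$, and the tail-by-tail bookkeeping you describe are all unnecessary here --- the whole argument is three lines once you isolate the continuity of $j_\eta$ at $\kappa_\alpha^+$ and the distributivity of $\vec{\A}$.
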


\begin{proof} Let $g \in \left(\prod_{\alpha<\eta}j(\kappa_\alpha)^+ \right)\cap
M_\eta[G^*]$.  Clearly $g \in V[\vec{H}]$.  Since each $\kappa_\alpha^+$ is a
continuity point of $j_\eta$, we can find an ordinal
$\gamma_\alpha<\kappa_\alpha^+$ such that $j_\eta(\gamma_\alpha)>g(\alpha)$.

By the distributivity of $\vec{\mathbb{A}}$, the sequence 
$\tilde{g} = \langle \gamma_\alpha \mid \alpha < \eta\rangle$ 
belongs to $V$. Pick (in $V$) an ordinal $\zeta$ such that $f_\zeta$ dominates $\tilde{g}$. Then, 
$j_\eta(f_\zeta) = j_\eta(f)_{j_\eta(\zeta)}$ dominates $g$.
\end{proof}

\begin{lemma} For $\delta<\kappa_\eta^+$ with $\eta < \cf(\delta)<\kappa_0$, if
$j_\eta(\delta)$ is a good point for $j(\vec{f})$ in $M_\eta[G^*]$, then
$\delta$ is a good point for $\vec{f}$ in $V[\vec{H}]$. \end{lemma}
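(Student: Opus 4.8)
The plan is to prove the contrapositive in the usual way: from a witness to badness of $\delta$ for $\vec{f}$ in $V[\vec H]$, produce a witness to badness of $j_\eta(\delta)$ for $j_\eta(\vec f)$ in $M_\eta[G^*]$. Recall that $\delta$ (of uncountable cofinality $<\kappa_0$, and $>\eta$) is a \emph{good point} for $\vec f$ if there is an unbounded $A\subseteq\delta$ and some $\beta<\eta$ such that the sequence $\langle f_\gamma\restriction[\beta,\eta)\mid\gamma\in A\rangle$ is pointwise strictly increasing, i.e.\ eventually ordered. So assume toward a contradiction that $j_\eta(\delta)$ is good in $M_\eta[G^*]$ but $\delta$ is bad in $V[\vec H]$.

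First I would push the witness up. Since $\crit(j_\eta)=\kappa_0>\cf(\delta)$, the map $j_\eta$ is continuous at $\delta$, so $j_\eta``\delta$ is cofinal in $j_\eta(\delta)$ and $\cf(j_\eta(\delta))=\cf(\delta)$. Given an unbounded $A^*\subseteq j_\eta(\delta)$ and $\beta^*<j_\eta(\eta)=\eta$ witnessing goodness of $j_\eta(\delta)$ in $M_\eta[G^*]$, I would first note $\beta^*<\eta$ already lies in the range of $j_\eta$ (it is below $\crit$), and then argue that $A := \{\gamma<\delta\mid j_\eta(\gamma)\in A^*\}$ or rather its closure need not be unbounded a priori — so instead I would work directly with a cofinal sequence. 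The cleaner route: pick a cofinal $\langle\gamma_\xi\mid\xi<\cf(\delta)\rangle$ in $\delta$; by continuity $\langle j_\eta(\gamma_\xi)\mid\xi<\cf(\delta)\rangle$ is cofinal in $j_\eta(\delta)$, and since $A^*$ is a club-in-$j_\eta(\delta)$-style witness (or we may thin it out to one of order type $\cf(\delta)$), we may assume $A^* = \{j_\eta(\gamma_\xi)\mid\xi<\cf(\delta)\}$ for a suitable choice of the $\gamma_\xi$. Then goodness says $\langle j_\eta(f)_{j_\eta(\gamma_\xi)}\restriction[\beta^*,\eta)\mid\xi<\cf(\delta)\rangle = \langle j_\eta(f_{\gamma_\xi})\restriction[\beta^*,\eta)\mid\xi<\cf(\delta)\rangle$ is pointwise increasing in $M_\eta[G^*]$.

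Now I would pull this back down. The statement ``$\langle f_{\gamma_\xi}\restriction[\beta^*,\eta)\mid\xi<\cf(\delta)\rangle$ is pointwise increasing'' is a statement about objects in $V$ (the $\gamma_\xi$, the restricted $f_{\gamma_\xi}$), and $j_\eta$ applied to each such restricted function is just the pointwise $j_\eta$-image, so pointwise-increasingness is preserved and reflected by elementarity of $j_\eta$ — hence it already holds in $V$, and a fortiori in $V[\vec H]$. This produces a cofinal $A=\{\gamma_\xi\mid\xi<\cf(\delta)\}\subseteq\delta$ witnessing goodness of $\delta$ for $\vec f$ in $V[\vec H]$, contradicting badness.

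The main obstacle is the management of the unbounded/club witness set $A^*$ across $j_\eta$: goodness is witnessed by \emph{some} unbounded set, not the image of a set in $V$, so one cannot simply reflect the witness verbatim. I expect the fix is exactly the thinning argument above: replace the given witness by a cofinal sequence of order type $\cf(\delta)$, use continuity of $j_\eta$ at $\delta$ (which holds since $\cf(\delta)<\crit(j_\eta)=\kappa_0$) to arrange that this sequence is of the form $\langle j_\eta(\gamma_\xi)\mid\xi<\cf(\delta)\rangle$ with $\langle\gamma_\xi\rangle\in V$ (the individual $\gamma_\xi$ are ordinals below $j_\eta(\delta)$ in the range of $j_\eta\restriction\delta$ by continuity, and the \emph{sequence} lies in $V$ because $\vec{\mathbb{A}}$ is sufficiently distributive — exactly as in the previous lemma — once we note $g\in V[\vec H]$ reasoning is not even needed here since the $\gamma_\xi$ come from $V$-ordinals). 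Then eventual domination transfers by elementarity. One small check I would include: that the bound $\beta^*$ coming from $M_\eta$ can be taken below $\eta=j_\eta(\eta)$ and hence is genuinely in $V$, which is immediate since $\eta<\kappa_0=\crit(j_\eta)$ so $j_\eta(\eta)=\eta$ and every ordinal below it is fixed.
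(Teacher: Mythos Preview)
Your proposal has a genuine gap at the step where you write ``we may assume $A^* = \{j_\eta(\gamma_\xi)\mid\xi<\cf(\delta)\}$ for a suitable choice of the $\gamma_\xi$.'' Continuity of $j_\eta$ at $\delta$ tells you only that $j_\eta``\delta$ is cofinal in $j_\eta(\delta)$; it does \emph{not} tell you that the given witness $A^*$ meets $j_\eta``\delta$ unboundedly. An unbounded subset of $j_\eta(\delta)$ can avoid $j_\eta``\delta$ entirely, and thinning $A^*$ to order type $\cf(\delta)$ does nothing to change this. So you cannot simply assume your witness lives in the range of $j_\eta$, and since an unbounded $B\not\subseteq A^*$ is not automatically a goodness witness, the ``elementarity'' pull-back at the end has nothing to pull back.

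The paper's proof supplies exactly the missing idea: a sandwich/interleaving argument to manufacture a new witness inside $j_\eta``\delta$. One takes any unbounded $B\subseteq j_\eta``\delta$ and, for each $\gamma\in B$, finds $\alpha_\gamma<\eta$ above which $j_\eta(\vec f)_\gamma$ is trapped between $j_\eta(\vec f)$ evaluated at the neighboring elements of $A^*$; this uses that $\vec f$ is a scale. Because $\cf(\delta)>\eta$, one can stabilize $\alpha_\gamma$ on an unbounded $B'\subseteq B$, and then $B'$ itself witnesses goodness of $j_\eta(\delta)$. Since $B'\subseteq j_\eta``\delta$, its preimage $\{\gamma<\delta\mid j_\eta(\gamma)\in B'\}$ witnesses goodness of $\delta$ in $V[\vec H]$ --- which is essentially your final step, now justified. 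Note in particular that the hypothesis $\cf(\delta)>\eta$, which your argument never uses, is exactly what makes the stabilization step work.
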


\begin{proof}  Suppose that $A \subseteq j_\eta(\delta)$ and $\alpha^* < \eta$
witness that $j_\eta(\delta)$ is good for $\vec{f}$ in $M_\eta[G^*]$.   Note
that $j_\eta``\delta$ is cofinal in $j(\delta)$.  By thinning $A$ if necessary
we let $B \subseteq j_\eta``\delta$ be an unbounded such that each element
$\gamma \in B$ has a greatest element of $A$ less than or equal to it.  For each
$\gamma$ in $B$, let $\alpha_\gamma$ be such that for all $\alpha \geq
\alpha_\gamma$, \[j(\vec{f})_{\max(A \cap (\gamma+1))}(\alpha) \leq
j(\vec{f})_\gamma(\alpha) < j(\vec{f})_{\min(A \setminus (\gamma+1))}(\alpha).\]
Let $B'$ be an unbounded subset of $B$ on which $\alpha_\gamma$ is fixed.  Then
$B'$ witnesses that $j_\eta(\delta)$ is good.  Since $B' \subseteq
j_\eta``\delta$, we have that $\{ \gamma < \delta \mid j_\eta(\gamma) \in B' \}$
witnesses that $\delta$ is good for $\vec{f}$ in $V[\vec{H}]$. \end{proof}

In $V[\vec{H}]$, let $S = \{ \delta < \bar{\kappa}_\eta^+ \mid \delta$ is
nongood for $\vec{f}$ and $\eta< \cf(\delta) < \theta \}$.  We claim that $S$ is
stationary.  Let $k\colon V[\vec{H}] \to N$ be an elementary embedding witnessing
that $\theta$ is $\bar{\kappa}_\eta^+$-supercompact in $V[\vec{H}]$.  Standard
arguments show that $\sup k``\bar{\kappa}_\eta^+$ is a nongood point for
$k(\vec{f})$.  It follows that $S$ is stationary, since $\sup
k``\bar{\kappa}_\eta^+ \in k(C)$ every club $C \subseteq \bar{\kappa}_\eta^+$ in
$V[\vec{H}]$.

Now suppose that in $M_\eta[G^*]$, there is a club $D$ of good points for
$j_\eta(\vec{f})$.  In $V[\vec{H}]$, let $C = \{ \delta < \bar{\kappa}_\eta^+
\mid j_\eta(\delta) \in D \}$.  By the previous lemma, $C$ is a $<
\kappa_0$-club consisting of good points for $\vec{f}$.  However, $S \cap C$ is
nonempty, a contradiction.

\section{Countable cofinality}\label{ctble}

Our goal in this section is to show that given the assumptions of Theorem \ref{mainthm3}, 
there exists a forcing extension which adds a generic filter for the extender based Prikry
forcing by a $(\kappa,\kappa^{++})$-extender (in particular, forces $\cf(\kappa)
= \omega$  and $2^\kappa = \kappa^{++}$) and satisfies that every stationary subset of $\kappa^+$ reflects. 

A necessary step for obtaining the latter is to add a club $D \subseteq \kappa^+$ disjoint from $S^{\kappa^+}_{\kappa}$, the set of ordinals $\alpha < \kappa^+$ of cofinality $\kappa$ in the ground model. 
In \cite{HU}, the second and third authors address the situation for adding a single Prikry sequence to $\kappa$. It is shown that under the subcompactness assumption of $\kappa$, there is a Prikry-type forcing which both singularizes $\kappa$ and adds a club $D$ as above, without generating new nonreflecting stationary sets. 

An additional remarkable aspect of the argument of \cite{HU} is that it presents a fertile framework in which the arguments address an extension $N_\omega[\mathcal{H}]$ of an iterated ultrapower $N_\omega$ of $V$, and assert directly that stationary reflection holds in $N_{\omega}[\mathcal{H}]$ without having to specify the poset by which $\mathcal{H}$ is added. 

Let us briefly describe the key ingredients of the construction of \cite{HU}, to serve as a reference for our
arguments in the context of the failure of SCH.
In \cite{HU}, one starts from a normal measure $U$ on $\kappa$ in $V$, and consider the $\omega$-iterated ultrapower by $U$, given by 
\[N_0 = V\text{, }i_{n,n+1} \colon N_n \to N_{n+1} \cong \Ult(N_n,i_n(U)),\]and the
direct limit embedding $i_{\omega} \colon V \to N_\omega$. 
It is well known that the sequence of critical points 
$\l \kappa_n \mid n < \omega\r$, $\kappa_{n+1} = i_{n,n+1}(\kappa_{n})$ is
Prikry generic over $N_\omega$ and that $N_{\omega}[\l \kappa_n \mid n<\omega
\r] = \bigcap_n N_n$, (\cite{Bukovski, Dehornoy}).  

Let $\qo$ be the Prikry name for the forcing for adding a disjoint club
from $(S^{\kappa^+}_\kappa)^V = \kappa^+ \cap \cof^V(\kappa)$. The forcing $\qo_{\omega} = i_\omega(\qo)^{\l \kappa_n \mid n < \omega\r} \in N_\omega[{\l \kappa_n \mid n < \omega\r}]$ is isomorphic to the forcing for adding a $\kappa^+$-Cohen set over $V$, and likewise, to adding a $\kappa_n^+$-Cohen  set  over $N_n$, for each $n < \omega$. 

Moreover, taking a $\qo_{\omega}$-generic filter $H $ over $V$, we have that both $H$ and $i_{0,1}``H $ generate mutually generic filters
over $N_1$ for $i_1(\qo_\omega) = \qo_\omega$.
More generally, for each $n$, the sequence  $H^n_0, \dots, H^n_n$, where each $H^n_k$ is generated by $i_{k,n}``H \subset i_n(\qo_\omega) = \qo_\omega$ for $1 \leq k \leq n$, are mutually generic filters for $\qo_\omega$ over $N_n$. With this choice of ``shifts'' of $H$, we obtain that for each $n < k$, the standard iterated ultrapower map $i_{n,k} \colon N_n \to N_k$ extends to 
$i_{n,k}^* \colon N_n[\l H^n_0,\dots, H^n_n \r] \to N_k[ \l H^{k}_0, \dots, H^{k}_n\r] \subseteq  N_k[ \l H^{k}_0, \dots, H^{k}_k\r]$.
For each $n$, the sequence $\l H^n_0, \dots, H^n_n \r$ is denoted by $\mathcal{H}_n$. 
The final extension $N_\omega[\mathcal{H}]$ of $N_\omega$ 
is given by the sequence $\mathcal{H} = \l H_n^\omega \mid n < \omega\r$, were $H_n^\omega$ is the filter generated by $i_{n,\omega}``H$, which achieves the critical  equality \[N_\omega[\mathcal{H}] = \bigcap_n N_n[\mathcal{H}_n].\] 

From this equality it follows at once that:
\begin{enumerate}
\item $N_\omega[\mathcal{H}]$ is closed under its $\kappa$-sequences;
\item $\kappa_\omega = i_\omega(\kappa)$ is singular in $N_\omega[\mathcal{H}]$,
as $\l \kappa_n \mid n<\omega \r$ belongs to each $N_n[\mathcal{H}_n]$;  and
\item $H \in N_\omega[\mathcal{H}]$, as $H = H^n_n$ for all $n < \omega$. 
\end{enumerate}

Therefore every stationary subset $S$ of $\kappa_\omega^+$ in $N_\omega[\mathcal{H}]$ can be assumed to concentrate at some cofinality $\rho < \kappa_\omega$. Say for simplicity that $\rho < \kappa_0$, 
one shows that $S$ reflects in $N_\omega[\mathcal{H}]$ by examining its pull
backs $S_n = i_{n,\omega}^{-1}(S) \subseteq \kappa_n^+$. If we have that
$\kappa_n = i_n(\kappa)$ is $\Pi^1_1$-subcompact in the Cohen extension
$N_n[\mathcal{H}_n]$ of $N_n$, then we have that if $S_n$ is stationary then it must reflect at some $\delta < \kappa_n^+$ of cofinality $\delta  <\kappa_n$.  Using $i_{n,\omega}$, we conclude from this that $S$ reflects at $i_{n,\omega}(\delta)$ in $N_\omega[\mathcal{H}]$. 

To rule out the other option, of having all $S_n \subseteq \kappa_n^+$ being
nonstationary in $N_n[\mathcal{H}_n]$, one takes witnessing disjoint clubs $C_n
\subseteq \kappa_n^+$ and uses the fact that for each $n < \omega$,
$i_{n,\omega} \colon N_n \to N_\omega$ extends to $i_{n,\omega}^* \colon
N_n[\mathcal{H}_n] \to N_\omega[  i_{n,\omega}``\mathcal{H}_n] \subset
N_{\omega}[\mathcal{H}]$. This allows us to show that the club $D_n =
i_{n,\omega}^*(C_n)  \subseteq \kappa_\omega^+$ belongs to
$N_\omega[\mathcal{H}]$ for each $n$. Since $N_\omega[\mathcal{H}]$  is closed
under its $\kappa$ sequences, it computes $\l D_n \mid n < \omega\r$ correctly
and thus also $D = \bigcap_n D_n$, that would have to be disjoint from $S$, a
contradiction.

We turn now to the new construction and prove Theorem \ref{mainthm3}.  Let $V'$ be a model which contains a $\kappa^+$-$\Pi^1_1$-subcompact cardinal $\kappa$, which also carries a $(\kappa,\kappa^{++})$-extender. 

Recall that $\kappa$ is $\kappa^+$-$\Pi^1_1$-subcompact if for every set $A \subseteq H(\kappa^+)$ and every $\Pi^1_1$-statement $\Phi$ such that 
$\l H(\kappa^+), \in ,A\ra \models \Phi$, there are $\rho < \kappa$, $B \subseteq H(\rho^+)$, and an elementary embedding 
$j \colon \l H(\rho^+), \in ,B\ra \to \l H(\kappa^+), \in ,A\r$ with $cp(j) = \rho$, such that $\l H(\rho^+), \in ,B\ra \models \Phi$.

Let $V$ be obtained from $V'$ by an Easton-support iteration of products $\Add(\alpha^+,\alpha^{++})$ for inaccessible $\alpha\leq\kappa$.  

By 
\cite[Lemma 42]{HU}, $\kappa$ remains $\kappa^+$-$\Pi_1^1$-subcompact in $V$ and even
in a further extension by $\Add(\kappa^+,\kappa^{++})$.  Moreover, by standard
argument it is routine to verify that $\kappa$ still carries a
$(\kappa,\kappa^{++})$-extender in $V$.
We note that as a consequence of $\kappa^+$-$\Pi_1^1$-subcompactness in $V$,
simultaneous reflection holds for collections of fewer than $\kappa$ many
stationary subsets of $S^{\kappa^+}_{<\kappa}$.  Further this property is
indestructible under $\Add(\kappa^+,\kappa^{++})$.

Working in $V$, let $E$ be a $(\kappa,\kappa^{++})$-extender.  Let \[\langle
j_{m,n}\colon M_m \to M_n \mid m \leq n \leq \omega \rangle\] be the iteration by $E$
and \[\langle i_{m,n}\colon N_m \to N_m \mid m \leq n \leq \omega\rangle\] be the iteration by the
normal measure $E_\kappa$ where $V = M_0 = N_0$.
We write $j_n$ for $j_{0,n}$ and $i_n$ for $i_{0,n}$.

We describe a generic extension of $M_\omega$ in which $j_\omega(\kappa^+)$
satisfies the conclusion of the theorem.  It follows by elementarity that there
is such an extension of $V$.  We are able to isolate the forcing used, but this
is not required in the proof.

We start by constructing a generic object for 
$j_\omega(\mathbb{P}_E)$ over $M_\omega$, where $\mathbb{P}_E$ is the 
 extender based forcing of Merimovich. Although for the most part, we will refer to Merimovich's arguments in \cite{Merimovich}, our presentation follows a more up-do-date presentation of the forcing, given by Merimovich in 
 \cite[Section 2]{Merimovich2}.

We recall that conditions $p \in \po_E$ are pairs of the form $p = \l f, T\r$, where $f \colon  d \to [\kappa]^{<\omega}$ is a partial function from $\kappa^{++}$ to 
$[\kappa]^{<\omega}$ with domain $d \in [\kappa^{++}]^{\leq \kappa}$ with $\kappa \in d$,  and $T$ is tree of height $\omega$, whose splitting sets are all measure one with respect to a a measure $E(d)$ on $V_\kappa$, derived from the extender $E$. The generator of $E(d)$ is the function $\mc(d) = \{ \l j(\alpha), \alpha \r \mid \alpha \in d\}$. 
Therefore, a typical node in the tree is an increasing sequence of functions $\l \nu_0,\dots,\nu_{k-1}\r$ where each $\nu_i$ is a partial, order preserving function $\nu_i \colon  \dom(\nu_i) \to \kappa$, with $\kappa \in \dom(\nu_i)$ and $|\nu_i| = \nu_i(\kappa)$. The sequence $\l \nu_0,\dots, \nu_{k-1}\r$ is increasing in the sense that $\nu_i(\kappa) < \nu_{i+1}(\kappa)$ for all $i$. 
When extending conditions $p = \l f, T\r \in \po_E$ we are allowed to 
\begin{enumerate}
\item[(i)] extend $f$ in as Cohen conditions (namely, add points $\gamma < \kappa^{++}$ to $\dom(f)$ and  arbitrarily choose $f(\gamma) \in [\kappa]^{<\omega}$), 
and modify the tree parts;
\item[(ii)] shrink the tree $T$; and  
\item[(iii)] choose a point 
$\l \nu \r \in \succ_{\emptyset}(T)$ to extend $p = \l f,T\r$ to $p_{\l \nu \r} = \l f_{\l \nu \r}, T_{\l \nu \r}\r$, where $f_{\l \nu \r}$ is defined by 
$\dom(f_{\l \nu \r}) = \dom(f)$ and 
$$
f_{\l \nu \r}(\alpha)  = 
\begin{cases}
 f(\alpha) \cup \{\nu(\alpha)\}  &\mbox{if }  \alpha \in \dom(\nu) \text{ and } \nu(\alpha) > \max(f(\alpha))\\
 f(\alpha) &\mbox{otherwise }
 \end{cases}
$$
\end{enumerate}

Any extension of $p$ is obtained by finite combination of (i)-(iii), and
the direct extensions of $p$ are those which are obtained by (i),(ii). 
The poset $\po_E^*$ is the suborder of $\po_E$ whose extension consists only of the Cohen type extension (i).
Clearly, $\po_E^*$ is isomorphic to $\Add(\kappa^+,\kappa^{++})$.

We turn to describe the construction of a $j_\omega(\po_E)$ generic following \cite{Merimovich}.
Let $G_0$ be $\mathbb{P}_E^*$ generic.  
We work by induction to define $G_n$ for
$n < \omega$.  Suppose that we have defined 
$G_n \subseteq j_n(\mathbb{P}_E^*)$
for some $n<\omega$.  First, let $G_{n+1}'$ be the closure of the set 
$j_{n,n+1}``G_n$. Then, we take $G_{n+1}$ to be obtained from $G_{n+1}'$ by adding
the ordinal $\alpha$ to $G_{n+1}'$
at coordinate $j_{n,n+1}(\alpha)$, for all $\alpha < j_n(\kappa^{++})$ .

\begin{claim}  $G_{n+1}$ is $M_{n+1}$-generic for $j_{n+1}(\mathbb{P}_E^*)$.
\end{claim}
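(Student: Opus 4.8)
The plan is to carry out the surgery argument underlying Merimovich's construction of generics over iterated ultrapowers \cite{Merimovich}, which is a localized version of Woodin's surgery \cite{Woodin-surgery}. Write $E^n_n := j_n(E)$, a $(\kappa_n,\kappa_n^{++})$-extender of $M_n$ where $\kappa_n = j_n(\kappa) = \crit(j_{n,n+1})$, so that $j_{n,n+1} = (j_{E^n_n})^{M_n}$ and $j_{n+1}(\po_E^*) = j_{n,n+1}(j_n(\po_E^*))$ has index set $\kappa_{n+1}^{++}$. Let $q^* := \bigcup j_{n,n+1}``G_n$, so that the closure $G'_{n+1}$ is $\{\,r\in j_{n+1}(\po_E^*) : r\subseteq q^*\,\}$. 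Since the values $f^p(\gamma)\in[\kappa_n]^{<\omega}$ of conditions $p\in G_n$ lie below $\crit(j_{n,n+1})$, and conditions in $G_n$ agree on shared coordinates, $q^*$ is a well-defined partial function on $\kappa_{n+1}^{++}$ with $q^*(j_{n,n+1}(\gamma)) = F_n(\gamma) := f^p(\gamma)$ for any $p\in G_n$ with $\gamma\in\dom f^p$. The surgered object $q^{**}$ is obtained from $q^*$ by adjoining the ordinal $\alpha$ to its value at the coordinate $j_{n,n+1}(\alpha)$ for every $\alpha<\kappa_n^{++}$, and $G_{n+1} := \{\,r\in j_{n+1}(\po_E^*):r\subseteq q^{**}\,\}$. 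This is plainly a filter, and the content of the claim is that it meets every dense open $D\in M_{n+1}$.

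I would first observe that $\dom q^* = \kappa_{n+1}^{++}$. Given $\beta<\kappa_{n+1}^{++}$, since $E^n_n$ has length $\kappa_n^{++}$ we may write $\beta = j_{n,n+1}(g_\beta)(a_\beta)$ for some $g_\beta\in M_n$ whose range is a subset of $\kappa_n^{++}$ of size $\leq\kappa_n$ (using that the Rudin–Keisler order of $E^n_n$ is directed to use a single generator); the set of $p$ with $\rng g_\beta\subseteq\dom f^p$ is dense and belongs to $M_n$, so some such $p$ lies in $G_n$, and then $\beta\in\rng(j_{n,n+1}(g_\beta)) = j_{n,n+1}(\rng g_\beta)\subseteq\dom f^{j_{n,n+1}(p)}\subseteq\dom q^*$. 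I also record that every generator of $E^n_n$ is below $\kappa_n^{++}<\kappa_{n+1}$, and that $\kappa_n\subseteq\dom f^p$ implies $\kappa_{n+1}=j_{n,n+1}(\kappa_n)\subseteq\dom f^{j_{n,n+1}(p)}$.

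Now fix dense open $D\in M_{n+1}$. Using $M_{n+1}=\Ult(M_n,E^n_n)$, write $D = (j_{n,n+1}(\vec D))_a$ for a finite tuple $a$ of generators of $E^n_n$ and a sequence $\vec D = \langle D_x\mid x\in[\kappa_n^{++}]^{<\omega}\rangle\in M_n$ which, by {\L}o\'{s}'s theorem and modification on an $E^n_n$-null set, consists of dense open subsets of $j_n(\po_E^*)$. Let $D^*$ be the set of $p$ with $\kappa_n\subseteq\dom f^p$ and $p\in D_x$ for all $x\in[\dom f^p]^{<\omega}$. An $\omega$-step fusion, using the $\kappa_n^+$-closure of $j_n(\po_E^*)\cong\Add(\kappa_n^+,\kappa_n^{++})^{M_n}$ and the fact that a fixed condition meets only $\kappa_n$ many $x$, shows $D^*$ is dense; since $D^*\in M_n$, pick $p\in G_n\cap D^*$. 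Because $\kappa_n\subseteq\dom f^p$ we get $a\subseteq\kappa_{n+1}\subseteq\dom f^{j_{n,n+1}(p)}$, so $a\in[\dom f^{j_{n,n+1}(p)}]^{<\omega}$, and applying $j_{n,n+1}$ to ``$p\in D_x$ for all $x\in[\dom f^p]^{<\omega}$'' with the instance $x=a$ yields $j_{n,n+1}(p)\in(j_{n,n+1}(\vec D))_a=D$. Finally $j_{n,n+1}(p)\in G_{n+1}$: on a coordinate $j_{n,n+1}(\gamma)$ with $\gamma\in\dom f^p$ its value is $j_{n,n+1}(f^p(\gamma))=f^p(\gamma)\subseteq F_n(\gamma)\cup\{\gamma\}=q^{**}(j_{n,n+1}(\gamma))$, and the remaining coordinates of $\dom f^{j_{n,n+1}(p)}$ are not of the form $j_{n,n+1}(\alpha)$, so the surgery does not act there and $q^{**}$ still agrees with $q^*$ and contains $j_{n,n+1}(p)$; thus $j_{n,n+1}(p)\subseteq q^{**}$ and $j_{n,n+1}(p)\in G_{n+1}\cap D$.

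The step I expect to be the main obstacle is the combination of $\dom q^*=\kappa_{n+1}^{++}$ with the final compatibility check. It is essential that the surgered coordinates $j_{n,n+1}(\alpha)$ carry finite subsets of $\kappa_n$, i.e.\ values frozen below $\crit(j_{n,n+1})$: adjoining a single ordinal there merely enlarges those finite sets and never conflicts with the images $j_{n,n+1}(p)$ used to meet dense sets, and this is exactly the feature of the situation that makes Woodin's surgery go through. Verifying that, after the surgery, the pushed-forward conditions still lie below $q^{**}$ — rather than being moved out of $D$ — is the delicate point, and everything else is routine bookkeeping with the extender ultrapower.
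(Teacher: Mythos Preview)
Your argument has a genuine gap at the final step, where you claim $j_{n,n+1}(p)\in G_{n+1}$. You defined $G_{n+1}=\{r:r\subseteq q^{**}\}$, where $r\subseteq q^{**}$ means agreement of values on $\dom r$ (this is the Cohen order on $\mathbb{P}_E^*\cong\Add(\kappa_n^+,\kappa_n^{++})$, where extension only enlarges the domain). But in your verification you only check the \emph{pointwise subset} relation $f^p(\gamma)\subseteq F_n(\gamma)\cup\{\gamma\}$. At a coordinate $j_{n,n+1}(\gamma)$ with $\gamma\in\dom f^p$ and $\gamma\geq\kappa_n$ (so $\gamma\notin f^p(\gamma)$), we have
\[
j_{n,n+1}(p)\bigl(j_{n,n+1}(\gamma)\bigr)=f^p(\gamma)\neq f^p(\gamma)\cup\{\gamma\}=q^{**}\bigl(j_{n,n+1}(\gamma)\bigr),
\]
so $j_{n,n+1}(p)\not\subseteq q^{**}$ and $j_{n,n+1}(p)\notin G_{n+1}$. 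In fact no nontrivial condition of the form $j_{n,n+1}(p)$ lies in $G_{n+1}$, so the approach of exhibiting an unmodified pushed-forward condition in $G_{n+1}\cap D$ cannot succeed.

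The paper's proof handles exactly this point. One first observes (as you essentially did) that the surgery alters any condition $j_{n,n+1}(p)\in G_{n+1}'$ on at most $|\dom f^p|\leq j_n(\kappa)$ coordinates, namely those in $j_{n,n+1}``\dom f^p$. Given a dense open $D$, one passes to the dense set $E\subseteq D$ of conditions all of whose $j_n(\kappa)$-sized modifications remain in $D$; density of $E$ uses the $j_{n+1}(\kappa^+)$-closure of $j_{n+1}(\mathbb{P}_E^*)$. Since $G_{n+1}'$ is $M_{n+1}$-generic, pick $j_{n,n+1}(p)\in G_{n+1}'\cap E$; its surgery-modified version then lies in $G_{n+1}\cap D$. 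Your $D^*$ argument showing $j_{n,n+1}(p)\in D$ is correct and could be salvaged by additionally demanding that $p$ lie \emph{robustly} in each $D_x$ (i.e.\ together with all ${<}\kappa_n$-sized alterations), which amounts to the same idea.
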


\begin{proof} This is a straightforward application of Woodin's surgery argument
\cite{Woodin-surgery}, so we only sketch the proof.  Let $D$ be a dense open
subset of $j_{n+1}(\mathbb{P}_E^*)$.  Let $E$ be the set of all $f$ in $D$ such
that all $j_{n}(\kappa)$ sized modifications of $f$ are in $D$.  $E$ is dense
using the closure of $j_{n+1}(\mathbb{P}_E^*)$.  Now the fact that $G_{n+1}'$
meets $E$ implies that $G_{n+1}$ meets $D$, since each condition in $G_{n+1}$ is
a $j_n(\kappa)$ sized modification of one in $G_{n+1}'$. \end{proof}

We make a few remarks.

\begin{remark} \label{combined-alterations} For each $n$, $G_{n}$ can be obtained directly from the upwards
closure of $j_n``G_0$ by combining the alterations used in construction of 
$G_i$
for $i<n$. \end{remark}

Since the proof of the previous claim can be repeated in any suitably closed
forcing extension, we have the following.

\begin{remark} \label{H-mutually-generic}  If $H$ is generic for
$j_n(\kappa^+)$-closed forcing and mutually generic to the upwards closure of
$j_n``G_0$, then $H$ and $G_n$ are mutually generic. \end{remark}

Let $G_\omega$ be the $j_\omega(\mathbb{P}_E)$-generic obtained from $G_0$ as in
\cite{Merimovich}.

\begin{claim} $M_\omega[G_\omega]$ is closed under $\kappa$-sequences.
\end{claim}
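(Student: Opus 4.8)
The plan is to mimic the closure argument from Proposition \ref{proposition: closure} for the uncountable-cofinality case, adapted to the $\omega$-iteration by the single extender $E$. The key structural fact is that $M_\omega = \bigcap_n M_n$ once we adjoin the generic, more precisely that $M_\omega[G_\omega] = \bigcap_n M_n[G_n^{\mathrm{full}}]$ for the appropriate full generics $G_n^{\mathrm{full}}$ derived from $G_n$, together with the fact that each factor map $j_{n,\omega}$ has critical point $\kappa_n = j_n(\kappa) \geq \kappa$. So first I would recall that every element of $M_\omega$ has the form $j_\omega(f)(a)$ for a finite tuple $a$ of generators lying in $j_n(\kappa^{++})$-many possibilities coming from finitely many stages, exactly as in Lemma \ref{Lem-itergens} but for the iteration by $E$; and that $G_\omega$ (hence any element of $M_\omega[G_\omega]$ represented by a name evaluated at $G_\omega$) is itself recoverable inside $M_\omega[G_\omega]$.

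The main steps, in order: (1) Let $\langle x_\mu \mid \mu < \kappa \rangle$ be a sequence in $M_\omega[G_\omega]$ lying in $V[G_0]$ (note $\vec H$ is replaced here by the $\po_E^*$-generic $G_0$, and $M_\omega[G_\omega] \subseteq V[G_0]$ since $G_\omega$ is constructed from $G_0$ by surgery over inner models); WLOG the $x_\mu$ are ordinals. (2) Write each $x_\mu = j_\omega(g_\mu)(a_\mu)$ where $a_\mu$ is a finite tuple of generators drawn from finitely many iteration stages; since $\crit(j_\omega) = \kappa$ and the relevant Rudin–Keisler order is $\kappa^+$-directed (here one uses that the iteration is by a $(\kappa,\kappa^{++})$-extender, so the generators at stage $n$ are pushforwards of ordinals below $\kappa^{++}$ and $\kappa^+$-directedness applies to select a common upper bound), arrange all the tuples to be pushforwards of a single fixed $\tau^* < \kappa^{++}$, i.e. $a_\mu$ is built from $j_{n}(\tau^*)$'s. (3) Reduce to showing that $\langle j_\omega(g_\mu) \mid \mu < \kappa\rangle$ and $\langle j_n(\tau^*) \mid n < \omega\rangle$ both lie in $M_\omega[G_\omega]$; the first is $j_\omega(\vec g)\restriction \kappa \in M_\omega$ since $\crit(j_\omega) = \kappa$, and the second is a finite sequence so trivially in $M_\omega$. (4) Conclude that $\langle x_\mu \mid \mu < \kappa\rangle$ is definable in $M_\omega[G_\omega]$ from these parameters together with $G_\omega$.

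Actually, since the extender-iteration $\langle j_{m,n} \rangle$ already has the property that its finite subiterations are all by the same measure-like objects with critical points at least $\kappa$, step (4) is cleaner here than in the forcing case: the sequence $\langle j_n(\tau^*) \mid n < \omega\rangle$ need not even be recovered from $G_\omega$ — it is simply a sequence of ordinals of length $\omega$, and $M_\omega[G_\omega]$, being a model of $\mathrm{ZFC}$ containing $\omega$, contains all its own $\omega$-sequences of ordinals from any set it contains. The only real content is therefore that each $x_\mu$ can be reconstructed inside $M_\omega[G_\omega]$ from data already in $M_\omega$, which follows from Lemma \ref{Lem-itergens}'s analogue plus $\crit(j_\omega) = \kappa$.

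The step I expect to be the main obstacle is verifying that the generators can be uniformized to pushforwards of a single $\tau^* < \kappa^{++}$ — i.e. the analogue of the "$\kappa_0^+$-directedness of the Rudin–Keisler order" move. For the long extender $E$ the Rudin–Keisler (Rudin–Frolík) order on the measures $E(d)$, $d \in [\kappa^{++}]^{\leq\kappa}$, is directed, and Lemma \ref{lemma: cofinal generators} tells us that in each $M_n$ the generators of $j_n(E)$ are cofinally given by $j_n(E(\tau))$ for $\tau < \kappa^{++}$; pushing forward along $j_{n,\omega}$ (which is continuous enough since $\crit(j_{n,\omega}) = \kappa_n > \kappa$) and using directedness to amalgamate over $\mu < \kappa$ gives the common $\tau^*$. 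I would spell out this amalgamation carefully, as it is where the cardinal arithmetic ($\kappa$-many bounds, each below $\kappa^{++}$, amalgamated below $\kappa^{++}$ using $(\kappa^+\text{-})$directedness) is actually used; everything else is a routine transcription of Proposition \ref{proposition: closure}.
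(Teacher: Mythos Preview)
Your proposal has a genuine gap at exactly the point you thought was trivial. You write that the sequence $\langle j_n(\tau^*) \mid n < \omega\rangle$ ``need not even be recovered from $G_\omega$ --- it is simply a sequence of ordinals of length $\omega$, and $M_\omega[G_\omega]$, being a model of $\mathrm{ZFC}$ containing $\omega$, contains all its own $\omega$-sequences of ordinals.'' This is false, and in fact circular: closure of $M_\omega[G_\omega]$ under $\omega$-sequences from $V[G_0]$ is a special case of the very claim you are proving. The model $M_\omega$ itself is \emph{not} closed under $\omega$-sequences --- it is the direct limit of an $\omega$-length iteration, and for example the critical sequence $\langle j_n(\kappa) \mid n<\omega\rangle$ is not in $M_\omega$. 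That $G_\omega$ repairs this is precisely the content of the claim, and it requires an argument.

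The paper's proof supplies exactly the missing step. Rather than uniformizing to a single $\tau^*$, it writes each $\gamma_\delta$ as $j_\omega(g_\delta)(\alpha_\delta, j(\alpha_\delta),\dots, j_{n_\delta-1}(\alpha_\delta))$ with $\alpha_\delta < \kappa^{++}$ (via Merimovich's representation), observes that $\{\alpha_\delta \mid \delta<\kappa\}$ is the domain of some $f\in G_0$, and then uses the key fact about the extender-based generic: the $\omega$-sequence with index $j_\omega(\alpha_\delta)$ in $G_\omega$ begins with $f(\alpha_\delta)\frown(\alpha_\delta, j(\alpha_\delta), \dots)$. Since $j_\omega``f\in M_\omega$, the entire tuple of generators can be read off inside $M_\omega[G_\omega]$. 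Your uniformization to a single $\tau^*$ would also work, but only if you carry out the analogue of this step: the sequence $\langle j_n(\tau^*)\mid n<\omega\rangle$ is (modulo the finite initial value recorded by $G_0$) the Prikry sequence at coordinate $j_\omega(\tau^*)$ in $G_\omega$, hence in $M_\omega[G_\omega]$. This is exactly parallel to how Proposition~\ref{proposition: closure} recovers $\langle j_\alpha(\tau^*)\mid \alpha<\eta\rangle$ from $G^*$ --- a step you cited but then skipped.
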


\begin{proof} It is enough to show that $M_\omega[G_\omega]$ is closed under
$\kappa$-sequences of ordinals.  Let $\l \gamma_\delta \mid \delta < \kappa \r$
be a sequence of ordinals.  We can assume that each $\gamma_\delta$ is of the
form $j_\omega(g_\delta)(\alpha_\delta, j(\alpha_\delta),\dots
j_{n_\delta-1}(\alpha_\delta))$ for some $g_\delta\colon[\kappa]^{n_\delta} \to
\kappa^{++}$ and $\alpha_\delta<\kappa^{++}$.  We refer the reader to
\cite{Merimovich} Corollary 2.6 for a proof.  Since $\l
j_\omega(g_\delta) \mid \delta < \kappa \r = j_\omega(\l g_\delta \mid
\delta<\kappa\r)\uhr \kappa \in M_\omega$, it is enough to show that $\l
(\alpha_\delta, \dots j_{n_\delta-1}(\alpha_\delta)) \mid \delta < \kappa \r \in
M_\omega[G_\omega]$.  To see this note that $\{\alpha_\delta \mid \delta <
\kappa\} = \dom(f)$ for some $f \in G_0$ and that the sequence $f(\alpha_\delta)
\frown (\alpha_\delta,
\dots j_{n_\delta-1}(\alpha_\delta))$ is an initial segment of the
$\omega$-sequence with index $j_\omega(\alpha_\delta)$ in $G_\omega$.  This is
enough to compute $\l
(\alpha_\delta, \dots j_{n_\delta-1}(\alpha_\delta)) \mid \delta < \kappa \r$,
since $j_\omega ``f \in M_\omega$. \end{proof}

The following lemma is proved in \cite[Section 4]{HayutBD}. 

\begin{lemma} \label{intersection1}  $\bigcap_{n<\omega} M_n[G_n] =
M_\omega[G_\omega]$. \end{lemma}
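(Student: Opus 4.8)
The plan is to prove the two inclusions separately, following the template for analyzing intersections of iterated ultrapowers going back to \cite{Bukovski,Dehornoy} and used in \cite{HU}. Throughout write $\kappa_n = j_n(\kappa)$, so that $\kappa_0 = \kappa$, $\crit(j_{n,\omega}) = \kappa_n$, and $\kappa_\omega := j_\omega(\kappa) = \sup_n \kappa_n$.

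For the inclusion $M_\omega[G_\omega] \subseteq \bigcap_{n<\omega} M_n[G_n]$ I would isolate two facts. First, the models are $\subseteq$-decreasing: since $j_{0,n}(E) \in M_n$, the ultrapower $M_{n+1} = \Ult(M_n, j_{0,n}(E))$ is a definable inner model of $M_n$; moreover the tail iteration $\langle M_m, j_{m',m} \mid n \le m' \le m \le \omega\rangle$, together with all its limit maps $j_{m,\omega}$, is definable over $M_n$, so that $M_\omega \subseteq M_n$ for every $n$, hence $M_\omega \subseteq \bigcap_n M_n$. Second, $G_\omega \in \bigcap_n M_n[G_n]$: analogously to Remark~\ref{combined-alterations} and Remark~\ref{F-invariance}, $G_\omega$ is obtained from the upward closure of $j_{n,\omega}``G_n$ by performing, at the stages $m \ge n$, the surgery procedure used above to construct the $G_m$ -- a recipe that only refers to the maps $j_{m,m+1}$ ($m \ge n$) and to Woodin's argument \cite{Woodin-surgery}, all available inside $M_n[G_n]$. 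Combining the two facts, any $x \in M_\omega[G_\omega]$ equals $\tau^{G_\omega}$ for a $j_\omega(\mathbb{P}_E)$-name $\tau \in M_\omega \subseteq M_n \subseteq M_n[G_n]$, and $G_\omega \in M_n[G_n]$, so $x \in M_n[G_n]$; as $n$ was arbitrary, $M_\omega[G_\omega] \subseteq \bigcap_n M_n[G_n]$.

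For the reverse inclusion, note first that the argument just given also shows $G_{n+1} \in M_n[G_n]$, hence $M_{n+1}[G_{n+1}] \subseteq M_n[G_n]$, so $\langle M_n[G_n] \mid n < \omega\rangle$ is a $\subseteq$-decreasing chain of transitive models of ZFC, and $M_n[G_n]$ correctly computes $M_m[G_m]$ and $M_\omega[G_\omega]$ for all $m \ge n$. A standard coding argument reduces the claim to showing that every set of ordinals in $\bigcap_n M_n[G_n]$ lies in $M_\omega[G_\omega]$ (and only this is needed for the applications in this paper). If $A \subseteq \gamma$ with $\gamma < \kappa_\omega$, fix $n$ with $\gamma < \kappa_n$: the forcing $j_n(\mathbb{P}_E^*) \cong \Add(\kappa_n^+, j_n(\kappa^{++}))$ is $\kappa_n^+$-closed over $M_n$, so $A \in M_n$, and since $\crit(j_{n,\omega}) = \kappa_n > \gamma$ the set $A$ is fixed by $j_{n,\omega}$ and therefore lies in $M_\omega \subseteq M_\omega[G_\omega]$. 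The remaining, unbounded case is where the real work lies: there I would fix for each $n$ a nice $j_n(\mathbb{P}_E^*)$-name $\dot A_n \in M_n$ for $A$, express it -- via the proof of Lemma~\ref{Lem-itergens} -- through finitely many generators of the canonical form together with a function over $V$, push these names forward along the tail maps $j_{n,\omega}$, and use Merimovich's analysis of dense open subsets of $j_\omega(\mathbb{P}_E)$ (exactly as in the proof that $M_\omega[G_\omega]$ is closed under $\kappa$-sequences) to see that the pushed-forward names stabilize to a single $j_\omega(\mathbb{P}_E)$-name $\tau \in M_\omega$ with $\tau^{G_\omega} = A$; the closure of $M_\omega[G_\omega]$ under $\kappa$-sequences is what lets one assemble $\tau$ from its finite pieces.

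The main obstacle is precisely this unbounded case: one must verify that the names $\dot A_n$ transfer coherently through the iteration, so that the limit name $\tau$ acquires no dependence on information about $G_0$ that is not retained in $G_\omega$ and, symmetrically, that the reconstruction of $A$ from $G_\omega$ calls only on objects already in $M_\omega[G_\omega]$. This is where the coherence of the extender iteration, the explicit description of Merimovich's generic $G_\omega$ in terms of the critical structure of the iteration, and the surgery bookkeeping from the construction of $G_\omega$ are combined, in direct parallel with the corresponding step of \cite{HU}; everything else reduces to the two facts $M_\omega \subseteq \bigcap_n M_n$ and $G_\omega \in \bigcap_n M_n[G_n]$ established above.
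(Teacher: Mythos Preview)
Your easy inclusion is correct and is essentially the paper's argument: since Merimovich's construction of $G_\omega$ can be run starting from $G_n$ in $M_n[G_n]$ and yields the same $G_\omega$, we get $M_\omega[G_\omega] \subseteq M_n[G_n]$ for every $n$.

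The hard direction, however, has a genuine gap. You propose to take $j_n(\mathbb{P}_E^*)$-names $\dot A_n \in M_n$ for $A$ itself and push them forward, expecting them to ``stabilize to a single $j_\omega(\mathbb{P}_E)$-name $\tau \in M_\omega$ with $\tau^{G_\omega} = A$.'' There is no mechanism for such stabilization: the $\dot A_n$ are chosen independently, their images $j_{n,\omega}(\dot A_n)$ are $j_\omega(\mathbb{P}_E^*)$-names (not $j_\omega(\mathbb{P}_E)$-names), and there is no $j_\omega(\mathbb{P}_E^*)$-generic available in $M_\omega[G_\omega]$ by which to interpret them. Neither Lemma~\ref{Lem-itergens} nor the dense-open analysis manufactures a limit name; closure under $\kappa$-sequences lets you collect a countable sequence of \emph{sets} inside $M_\omega[G_\omega]$, but an object ``$\tau \in M_\omega$'' cannot be assembled that way.

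The paper's route differs in two essential respects. First, it passes to the \emph{pullbacks} $x_n = \{\alpha : j_{n,\omega}(\alpha) \in x\}$, so that after applying $j_{n,\omega}$ one is naming the right set. Second, it converts each $j_n(\mathbb{P}_E^*)$-name $\dot x_n$ to a $j_n(\mathbb{P}_E)$-name $\dot x_n^*$ by attaching full trees to the Cohen conditions; the point is that, because $G_\omega$ is exactly Merimovich's Prikry generic built over $M_\omega$ starting from the Cohen generic $G_n$, one obtains $\alpha \in x_n$ iff $j_{n,\omega}(\alpha) \in j_{n,\omega}(\dot x_n^*)^{G_\omega}$. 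No stabilization of names is ever invoked: by $\omega$-closure the sequence $\langle j_{n,\omega}(\dot x_n^*)^{G_\omega} \mid n<\omega\rangle$ lies in $M_\omega[G_\omega]$, and $x$ is recovered there as the set of $\zeta$ belonging to $j_{n,\omega}(\dot x_n^*)^{G_\omega}$ for all sufficiently large $n$. Your bounded-case argument is correct but becomes redundant once this is in place.
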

Since we will need to prove a stronger version ahead, let us sketch the proof of this lemma.
\begin{proof}
First, the inclusion $\bigcap_{n<\omega} M_n[G_n] \supseteq
M_\omega[G_\omega]$ is clear. 

As in the case of the standard Bukovsk\'y-Dehornoy Theorem, in order to prove the other direction we would like to take a set of ordinals $X \in M_\omega[G_\omega]$ and consider the sets
\[Y_n = \{\alpha \mid j_{n,\omega}(\alpha) \in X\} \in M_n[G_n].\]

Note that the filter generated by $j_{n,\omega}`` G_n$ is not a member of $M_\omega[G_\omega]$ and thus we must be more careful above the way we push $Y_n$ into $M_\omega[G_\omega]$. 

Let us pick for each $n$ a name $\dot{y}_n \in M_n$ for $Y_n$. As the sequence of names $\langle j_{n,\omega}(\dot{y}_n)\mid n < \omega\rangle$ is a member of $M_\omega[G_\omega]$ (using its closure under countable sequence), we must verify that we can evaluate correctly, without access to the filters $j_{n,\omega}(G_n)$, whether an ordinal $\gamma$ belongs to a tail of sets $j_{n,\omega}(\dot{y}_n)_{j_{n,\omega}(G_n)}$. 

This is done using Merimovich's genericity criteria, \cite{MerimovichGenCriteria}. Indeed, for $a \subseteq j_{\omega}(\kappa^{++})$ set of ordinals of cardinality $j_{\omega}(\kappa)$, which is the set of ordinals of an elementary substructure of a sufficiently large initial segment of $M_\omega$, one can consider a tree Prikry type forcing using $j_\omega(E)(a)$, and verify that there is a generic sequence $\langle t_n \mid n < \omega\rangle$ for it in $M_\omega[G_\omega]$ such that,
$\exists q \in j_\omega(\mathbb{P}_E^*)$ such that for all $n$, $q^\smallfrown\langle t_0,\dots, t_{n-1}\rangle \in G_\omega$. Moreover, this sequence is unique up to shifts and finite modifications, see \cite[Lemma 3.4]{MerimovichGenCriteria} --- which is the crux of this argument. 

As $j_{\omega}(\kappa) \in a$ and we may assume that the Prikry sequence at this coordinate is the standard one, one can fix the shift and identify the sequence $\langle j_{\omega}(p)^\smallfrown \langle \nu_0, \dots, \nu_{n-1}\rangle \mid n < \omega\rangle$ up to a finite error. 

In particular, by taking $a$ to be large enough, we conclude that there is $p \in G_0$ such that for all large $n$,
\[j_{\omega}(p)^\smallfrown \langle \nu_0, \dots, \nu_{n-1}\rangle \Vdash\check{\gamma} \in j_{n,\omega}(\dot{y}_n)\]
if and only if $\gamma \in X$.

Therefore, $\gamma \in X$ if and only if there is a condition $q\in G_\omega$ with large domain $a$  and a Prikry generic $\langle t_n \mid n < \omega\rangle$ such the length of $q(j_\omega(\kappa))$ is zero and for all large $n$  
\[q^\smallfrown \langle \nu_0, \dots, \nu_{n-1}\rangle \Vdash\check{\gamma} \in j_{n,\omega}(\dot{y}_n).\]
\end{proof}



We are now ready to describe the generic extension of $M_\omega$.  We recall
some of the basic ideas from \cite{HU}.  Let $\dot{\mathbb{Q}}$ be
the canonical name in the Prikry forcing defined from $E_\kappa$ for the poset
to shoot a club disjoint from the set of $\alpha < \kappa^+$ such that
$\cf^V(\alpha) = \kappa$.  Let
$\mathbb{Q}_\omega$ be the forcing $i_\omega(\dot{\mathbb{Q}})$ as interpreted
by the critical sequence $\langle i_n(\kappa) \mid n<\omega \rangle$.  By the
argument following Claim 39 of \cite{HU}, $\mathbb{Q}_\omega$ is
equivalent to the forcing $\Add(\kappa^+,1)$ in $V$.  In fact, for every
$n<\omega$, in $N_n$ it is equivalent to $\Add(i_n(\kappa^+),1)$.

Let $k_n\colon N_n \to M_n$ be the natural embedding given by 
\[
k_n(i_n(f)(\kappa,
\dots i_{n-1}(\kappa))) = j_n(f)(\kappa, \dots j_{n-1}(\kappa)).
\]

We discussed the notion of a \emph{width} of an elementary embedding in the previous section. We recall a fundamental result concerning  lifting embeddings and their widths (see \cite{CummingsHandbook}).

\begin{lemma}\label{Lem:widthgen}
 Suppose that $k \colon  N \to M$ has width $\mu$, $\qo \in N$ is a $\mu^+$-distributive forcing, and $H \subseteq \qo$ is generic over $N$, then $k``H \subset k(\qo)$ generates a generic filter 
 $\l k``H  \r$ for $M$.
\end{lemma}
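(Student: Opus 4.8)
The plan is to verify the two standard conditions for lifting: that $k``H$ generates a filter on $k(\qo)$ that is directed (hence genuinely a filter), and that it meets every dense subset of $k(\qo)$ lying in $M$. The first point is immediate from elementarity: if $p_0,\dots,p_{n-1} \in H$ have a common lower bound $q \in H$ (which exists since $H$ is a filter), then $k(q) \leq k(p_i)$ for all $i$, so the upward closure $\l k``H\r$ of $k``H$ in $k(\qo)$ is closed under finite meets and is a filter. So the real content is genericity.

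For genericity, let $D \in M$ be dense open in $k(\qo)$. Using width $\kappa$, write $D = k(\vec D)(a)$ where $\vec D \in N$ is a function with $N \models |\dom(\vec D)| \leq \kappa$ and $a \in M$; without loss of generality $\vec D$ is a $\dom(\vec D)$-indexed sequence $\l D_x \mid x \in \dom(\vec D)\r$ of dense open subsets of $\qo$. Now I would work in $N$: since $\qo$ is $\kappa^+$-distributive and $|\dom(\vec D)| \leq \kappa$, the intersection $D^* = \bigcap_{x} D_x$ is dense (in fact dense open) in $\qo$. Hence there is $q \in H \cap D^*$. By {\L}o\'s / elementarity, $k(q) \in k(D_x)$ for the particular $x$ coded by $a$, i.e. $k(q) \in D = k(\vec D)(a)$; since $k(q) \in k``H \subseteq \l k``H\r$, this witnesses $\l k``H\r \cap D \neq \emptyset$. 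As $D$ was arbitrary, $\l k``H\r$ is $M$-generic for $k(\qo)$.

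The one point requiring a little care — and the place I expect the main (mild) obstacle — is the passage from ``$D$ is dense open'' to ``each $D_x$ may be taken dense open in $\qo$.'' Given $D \in M$ dense open and a representation $D = k(f)(a)$ with $\dom(f)$ of size $\leq \kappa$ in $N$, one shrinks $f$ by replacing $f(x)$ with the set $\{p : p \text{ forces (in the trivial forcing, i.e. decides) } p \in k(f)(a)\}$... more simply: let $D_x$ be the interior (largest dense-open subset) of $f(x)$ if $f(x)$ happens to be dense, and $\qo$ otherwise; then $D = k(f)(a)$ is dense open in $M$ forces, via {\L}o\'s, that for $M$-almost all relevant $x$ the set $f(x)$ is dense open, so one may harmlessly assume every $f(x)$ is dense open in $\qo$ from the start. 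Once that normalization is in place, the argument above goes through verbatim, and this is exactly the content of the cited result in \cite{CummingsHandbook}, so I would simply invoke it.
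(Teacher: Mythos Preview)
Your argument is correct and is exactly the standard proof of this transfer lemma; the paper does not give its own proof but simply cites \cite{CummingsHandbook}, where precisely this argument appears. Your normalization step (replacing $f(x)$ by $\qo$ whenever $f(x)$ is not dense open, so that by elementarity $k(f)(a)$ is unchanged) is the right way to handle the one mild point you flagged.
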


\begin{claim}\label{claim:bounding-the-width-of-k} For all $1\leq n<\omega$, $k_n$ has width $\leq
(i_{n-1}(\kappa)^{++})^{N_n}$. \end{claim}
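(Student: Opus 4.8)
The plan is to verify the definition of width directly. Write $\theta_n := (i_{n-1}(\kappa)^{++})^{N_n}$; we must show that every $x\in M_n$ can be written $k_n(f)(a)$ with $a\in M_n$ and $f\in N_n$ satisfying $N_n\models|\dom f|\le\theta_n$. The first ingredient is the standard generator analysis of the iteration $\langle j_{m,n}\mid m\le n\rangle$, exactly along the lines of the proof of Lemma~\ref{Lem-itergens} (or \cite[Cor.~2.6]{Merimovich}): every $x\in M_n$ has the form $x=j_n(g)(\alpha,j_1(\alpha),\dots,j_{m-1}(\alpha))$ for some $m\le n$, some $\alpha<\kappa^{++}$, and some $g\in V$ with $\dom g=[\kappa]^m$ — here one uses the $\kappa^+$-directedness of the Rudin--Keisler orders to collapse several seeds to a single ordinal $\alpha<\kappa^{++}$, and Lemma~\ref{lemma: cofinal generators} to take that ordinal of the form $j_i(\alpha)$ at each stage. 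The crucial feature of this step is that the largest seed $j_{m-1}(\alpha)$ lies below $j_{n-1}(\kappa^{++})$.

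Next, using the factorization $j_n=k_n\circ i_n$, rewrite $x=k_n(i_n(g))(\vec c)$ with $\vec c=\langle\alpha,j_1(\alpha),\dots,j_{m-1}(\alpha)\rangle$. Since $\dom(i_n(g))=[i_n(\kappa)]^m$ is too large, I would pass to $f:=i_n(g)\restriction[\theta_n]^m$; this is legitimate because $i_{n-1}(\kappa)<i_n(\kappa)$ and $i_n(\kappa)$ is an $N_n$-cardinal above $i_{n-1}(\kappa)$, so $\theta_n\le i_n(\kappa)<i_n(\kappa^{++})$, and it has $N_n\models|\dom f|=\theta_n$. By elementarity of $k_n$ and $k_n(i_n(g))=j_n(g)$ we get $k_n(f)=j_n(g)\restriction[k_n(\theta_n)]^m$, so $k_n(f)(\vec c)=x$ as soon as every coordinate of $\vec c$ lies below $k_n(\theta_n)$. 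As those coordinates are all $<j_{n-1}(\kappa^{++})$, the entire claim reduces to the single inequality $j_{n-1}(\kappa^{++})\le k_n(\theta_n)$.

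This last point is the crux, and the only place where real content enters. Because $k_n(i_{n-1}(\kappa))=j_{n-1}(\kappa)$, elementarity of $k_n$ gives $k_n(\theta_n)=(j_{n-1}(\kappa)^{++})^{M_n}$, so the inequality is equivalent to $(j_{n-1}(\kappa)^{++})^{M_{n-1}}=(j_{n-1}(\kappa)^{++})^{M_n}$, i.e.\ that the extender ultrapower $M_n=\Ult(M_{n-1},j_{n-1}(E))$ does not collapse the double successor of its critical point $j_{n-1}(\kappa)$. I would deduce this from the closure of the ground-model extender ultrapower $\Ult(V,E)$ under $\kappa^+$-sequences, together with $\GCH$ in $V$ (which holds after the preparatory iteration): by elementarity applied to $j_{n-1}:V\to M_{n-1}$, the ultrapower $\Ult(M_{n-1},j_{n-1}(E))$ is closed, inside $M_{n-1}$, under $j_{n-1}(\kappa)^+$-sequences, so $H(j_{n-1}(\kappa)^{++})^{M_{n-1}}\subseteq M_n$, which yields the required equality $(j_{n-1}(\kappa)^{++})^{M_n}=(j_{n-1}(\kappa)^{++})^{M_{n-1}}=j_{n-1}(\kappa^{++})$.

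The main obstacle is thus precisely this closure/preservation fact about the extender at the top stage of the iteration; everything else is bookkeeping with {\L}o\'{s}'s theorem and with the commuting relations $j_{m,n}\circ k_m=k_n\circ i_{m,n}$ and $k_n(i_m(\kappa))=j_m(\kappa)$, which are what make the normal form, the factorization, and the identity $k_n(f)=j_n(g)\restriction[k_n(\theta_n)]^m$ go through cleanly.
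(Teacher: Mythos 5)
Your bookkeeping follows the paper's proof exactly: write $x=j_n(g)(\alpha,j_1(\alpha),\dots,j_{n-1}(\alpha))$ with $\alpha<\kappa^{++}$, restrict $i_n(g)$ to tuples below $\theta_n=(i_{n-1}(\kappa)^{++})^{N_n}$, and apply $k_n$, using $k_n\circ i_n=j_n$ and $k_n(i_{n-1}(\kappa))=j_{n-1}(\kappa)$. You are also right that the only substantive point is that the seeds lie below $k_n(\theta_n)=(j_{n-1}(\kappa)^{++})^{M_n}$, i.e.\ that the one-step ultrapower by $j_{n-1}(E)$ computes the double successor of its critical point correctly; the paper's proof passes over this with the words ``makes sense''. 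The problem is your justification of that point. You assert that $\Ult(V,E)$ is closed under $\kappa^+$-sequences. This is provably false for a $(\kappa,\kappa^{++})$-extender (indeed for any short extender whose generators lie below $j_E(\kappa)$): every element of $j_E(\kappa^+)$ has the form $j_E(f)(a)$ with $f:[\kappa]^{<\omega}\to\kappa^+$, and since $|\rng(f)|\leq\kappa<\cf(\kappa^+)$ we get $j_E(f)(a)<j_E(\sup\rng f)$, so $j_E$ is continuous at $\kappa^+$ and $j_E``\kappa^+$ is cofinal in $j_E(\kappa^+)$ with order type $\kappa^+<j_E(\kappa^+)$. If ${}^{\kappa^+}\Ult(V,E)\subseteq\Ult(V,E)$ held, this sequence would lie in the ultrapower and witness that $j_E(\kappa^+)$ is singular there, contradicting its regularity in $\Ult(V,E)$. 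Consequently your intermediate claim $H(j_{n-1}(\kappa)^{++})^{M_{n-1}}\subseteq M_n$ does not follow from anything you have stated; it is equivalent (by elementarity) to $H(\kappa^{++})^V\subseteq\Ult(V,E)$, a strengthening of the stated hypothesis, not a consequence of $\kappa$-closure plus GCH. (Your aside about GCH surviving the preparation is fine but does not help here.)

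So the gap is the crux itself: the inequality $j_{n-1}(\kappa^{++})\leq(j_{n-1}(\kappa)^{++})^{M_n}$ has to come from the way the $(\kappa,\kappa^{++})$-extender is taken in this Gitik--Merimovich setting, namely that $\Ult(V,E)$ computes $\kappa^{++}$ correctly (for instance because $E$ is derived from an embedding with $H(\kappa^{++})\subseteq M$, so that for each $\gamma\in(\kappa^+,\kappa^{++})$ a collapse of $\gamma$ onto $\kappa^+$ belongs to the ultrapower); only $\kappa$-closure of $\Ult(V,E)$ is available in general, and that alone gives correctness of $\kappa^+$, not of $\kappa^{++}$. If you replace your closure argument by this convention (or by any direct verification that $(\kappa^{++})^{\Ult(V,E)}=\kappa^{++}$ in the prepared model), the rest of your write-up matches the paper's proof and is fine.
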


\begin{proof}  Let $x = j_n(g)(\alpha,j(\alpha), \dots j_{n-1}(\alpha))$ be an
element of $M_n$, $g\colon [\kappa]^n \to V$ and $\alpha < \kappa^{++}$.
Consider $h = i_n(g) \uhr (i_{n-1}(\kappa)^{++})^{N_n}$.  Then 
\[k(h)(\alpha,j(\alpha), \dots j_{n-1}(\alpha)) = x.\] 
Indeed, the domain of $k(h)$ is large enough for this evaluation to make sense. \end{proof}

We are now ready to define the analogs of $\mathcal{H}$ and $\mathcal{H}_n$ for
our situation.  Recall that for subset $X$ of some poset, we write $< X >$ for
the upwards closure in that poset.  It will be clear from the
context which poset we are working with, when taking this upwards closure. 

Let $H$ be generic for $\mathbb{Q}_\omega$ over $V[G_0]$ and recall
that $\qo_\omega = i_n(\qo_\omega)$ is also a member of $N_n$ for every $n$, and is $i_n(\kappa^+)$-closed. We note that clearly, $i_n(\kappa^+) > i_{n-1}(\kappa^{++}) \geq \text{width}(k_n)$.  

Let 
\[\mathcal{H}_n
= \langle {<}j_{m,n} \circ k_m``H{>} \mid m \leq n \rangle\] 
and 
\[\mathcal{H} = \langle
{<}j_{m,\omega} \circ k_m``H{>} \mid m < \omega \rangle.\] 
 Note that 
$\mathcal{H}_n$ is a subset of $\prod_{m \leq n} j_{m,n}(k_m(\qo_\omega))$.  We
do not yet know that it is generic.

We prove a sequence of claims about $\mathcal{H}_n$ and $\mathcal{H}$.  The
first is straightforward.

\begin{claim} For all $n<\omega$, \[\mathcal{H}_{n+1} = {<}j_{n,n+1} ``
\mathcal{H}_n{>} \frown {<}k_{n+1}``H{>}.\] \end{claim}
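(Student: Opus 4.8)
The plan is to unwind the definition of $\mathcal{H}_{n+1}$ and compare it with the right-hand side coordinate by coordinate, using only that the iterated ultrapower maps compose correctly and that pointwise images under order-preserving maps commute with upward closure. Recall $\mathcal{H}_{n+1} = \l {<}j_{m,n+1}\circ k_m``H{>} \mid m \le n+1\r$. The last coordinate (the case $m = n+1$) is ${<}j_{n+1,n+1}\circ k_{n+1}``H{>} = {<}k_{n+1}``H{>}$, since $j_{n+1,n+1}$ is the identity on $M_{n+1}$; this accounts for the term appended via $\fr$ on the right. So it remains to show that the initial segment $\l {<}j_{m,n+1}\circ k_m``H{>} \mid m \le n\r$ equals ${<}j_{n,n+1}``\mathcal{H}_n{>}$, where the latter is read coordinate-wise: for each $m \le n$, its $m$-th coordinate is ${<}j_{n,n+1}``\big({<}j_{m,n}\circ k_m``H{>}\big){>}$.

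Fix $m \le n$ and abbreviate $X = j_{m,n}\circ k_m``H$, a subset of the poset $j_{m,n}(k_m(\qo_\omega))$. First I would record the general fact that for an order-preserving map $\ell$ between posets and any subset $X$ of its domain, ${<}\ell``{<}X{>}{>} = {<}\ell``X{>}$: the inclusion $\supseteq$ is immediate from $X \subseteq {<}X{>}$, while for $\subseteq$ any element $\ell(q) \in \ell``{<}X{>}$ has $q \ge p$ for some $p \in X$, so $\ell(q) \ge \ell(p) \in \ell``X$ and hence $\ell(q) \in {<}\ell``X{>}$. Applying this with $\ell = j_{n,n+1}$ shows that the $m$-th coordinate of ${<}j_{n,n+1}``\mathcal{H}_n{>}$ is ${<}j_{n,n+1}``X{>}$. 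Finally, $j_{n,n+1}``X = j_{n,n+1}``\big(j_{m,n}``(k_m``H)\big) = (j_{n,n+1}\circ j_{m,n})``(k_m``H) = j_{m,n+1}``(k_m``H)$, using the commutativity $j_{n,n+1}\circ j_{m,n} = j_{m,n+1}$ of the iteration. Thus ${<}j_{n,n+1}``X{>} = {<}j_{m,n+1}\circ k_m``H{>}$, which is precisely the $m$-th coordinate of $\mathcal{H}_{n+1}$, finishing the proof.

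There is no genuine obstacle here; the only points requiring a moment's care are the bookkeeping of what the notation ${<}j_{n,n+1}``\mathcal{H}_n{>}$ means when applied to a sequence (coordinate-wise pointwise image followed by coordinate-wise upward closure) and the elementary observation that pushing a filter's upward closure forward under an order-preserving map and re-closing yields the same thing as pushing the filter forward and then closing. Everything else is just the functoriality of the iteration maps $j_{m,n}$.
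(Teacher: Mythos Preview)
Your proposal is correct and is exactly the kind of straightforward definition-unwinding the paper has in mind; the paper itself gives no proof, remarking only that the claim is ``straightforward,'' and your argument supplies precisely the routine verification (coordinate-wise comparison using $j_{n,n+1}\circ j_{m,n}=j_{m,n+1}$ and the commutation of upward closure with pointwise images under order-preserving maps) that justifies this.
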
 

Let $\bar{\mathcal{H}}_n = \langle {<} i_{m,n}``H{>} \mid m \leq n \rangle$.

\begin{claim} $\mathcal{H}_n = {<} k_n `` \bar{\mathcal{H}}_n{>}$.  \end{claim}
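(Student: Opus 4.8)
The goal is to prove that $\mathcal{H}_n = {<} k_n `` \bar{\mathcal{H}}_n{>}$, where $\mathcal{H}_n = \langle {<}j_{m,n} \circ k_m``H{>} \mid m \leq n \rangle$ and $\bar{\mathcal{H}}_n = \langle {<} i_{m,n}``H{>} \mid m \leq n \rangle$. This is an equality of sequences of filters, so I would prove it coordinate by coordinate: fix $m \leq n$ and show that ${<}j_{m,n} \circ k_m``H{>} = {<} k_n `` ({<}i_{m,n}``H{>}){>}$ as filters on $j_{m,n}(k_m(\qo_\omega)) = k_n(i_{m,n}(\qo_\omega))$.

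The plan is to reduce everything to the commutativity of the natural factor embeddings. The key algebraic fact is that $j_{m,n} \circ k_m = k_n \circ i_{m,n}$ for all $m \leq n$: both sides are elementary embeddings $N_m \to M_n$, and one checks they agree on the generators $i_m(f)(\kappa, \dots, i_{m-1}(\kappa))$ by unwinding the definitions of $k_m$, $k_n$ and the iteration maps (using the Mitchell-order-driven commutativity already invoked when $k_n$ was defined as an iterated ultrapower embedding). Granting this, for a fixed $m$ we have $j_{m,n} \circ k_m `` H = k_n `` (i_{m,n} `` H)$ as sets, so their upward closures in $k_n(i_{m,n}(\qo_\omega))$ coincide. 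The only subtlety is that on the right-hand side of the claim we first take the upward closure ${<}i_{m,n}``H{>}$ inside $i_{m,n}(\qo_\omega) \in N_n$ and only then apply $k_n``(-)$ and close upward again; so I must argue ${<} k_n `` ({<} i_{m,n}``H{>}) {>} = {<} k_n `` (i_{m,n}``H) {>}$. This holds because $k_n$ is order-preserving: if $p \in {<}i_{m,n}``H{>}$ then $p \geq i_{m,n}(q)$ for some $q \in H$, hence $k_n(p) \geq k_n(i_{m,n}(q)) \in k_n``(i_{m,n}``H)$, so $k_n(p)$ is already in the upward closure of $k_n``(i_{m,n}``H)$; the reverse containment is trivial since $i_{m,n}``H \subseteq {<}i_{m,n}``H{>}$.

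Assembling the coordinates, $\mathcal{H}_n = \langle {<}j_{m,n}\circ k_m``H{>} \mid m \leq n\rangle = \langle {<}k_n``(i_{m,n}``H){>} \mid m \leq n\rangle = \langle {<}k_n``({<}i_{m,n}``H{>}){>} \mid m \leq n\rangle = {<}k_n``\bar{\mathcal{H}}_n{>}$, where in the last step I interpret $k_n``\bar{\mathcal{H}}_n$ componentwise and the outer ${<}\cdot{>}$ as taking upward closures in each coordinate of the product $\prod_{m\leq n} k_n(i_{m,n}(\qo_\omega)) = \prod_{m\leq n} j_{m,n}(k_m(\qo_\omega))$. Strictly speaking one should also note these are all subsets of the appropriate product posets as indicated in the text preceding the claim, which is immediate from the coordinatewise analysis.

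The main obstacle is verifying the commutativity identity $j_{m,n}\circ k_m = k_n\circ i_{m,n}$ cleanly; this is the one place where genuine content (the factor structure of the iterated ultrapowers by $E$ versus by $E_\kappa$, and the Mitchell-order bookkeeping) enters, and it should be dispatched by the same routine {\L}o\'{s}-theorem argument used earlier in the section to show $k_n$ is well defined and $j_n = k_n\circ i_n$ — indeed the case $m=0$ of the identity is exactly $j_n = k_n \circ i_n$, and the general case follows by the same reasoning applied to the tail of the iteration starting at stage $m$. Everything else is purely formal manipulation of upward closures.
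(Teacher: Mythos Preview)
Your proof is correct and follows essentially the same route as the paper: the paper's entire argument is the single line that ${<}k_n``{<}i_{m,n}``H{>}{>} = {<}j_{m,n}``{<}k_m``H{>}{>}$ holds because $k_n \circ i_{m,n} = j_{m,n} \circ k_m$, and you have supplied exactly this with additional detail on the nested-upward-closure bookkeeping. One small remark: your mention of ``Mitchell-order-driven commutativity'' is out of place here, since Section~\ref{ctble} iterates a single extender $E$ and its normal part $E_\kappa$, so the commutativity $k_n\circ i_{m,n}=j_{m,n}\circ k_m$ comes directly from the explicit formula defining $k_n$ (padding with dummy generators as in the $m=0$ case), not from any Mitchell-order argument.
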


This is immediate from the following.  For $m \leq n$, we have \[{<}k_n ``
{<}i_{m,n}``H{>}{>} = {<}j_{m,n} `` {<}k_m `` H{>}{>}\] since $k_n \circ i_{m,n} = j_{m,n}
\circ k_m$.  In particular, since $\bar{\mathcal{H}}_n$ is generic for
$i_n(\kappa^+)$-closed forcing over $N_n$ and $k_n$ has width less than
$i_n(\kappa)$, $\mathcal{H}_n$ is generic over $M_n$.

\begin{claim} $\mathcal{H}_n$ is mutually generic with $G_n$ over $M_n$.
\end{claim}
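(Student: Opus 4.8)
The plan is to derive the claim from Remark~\ref{H-mutually-generic}, taking $\mathcal{H}_n$ there in the role of $H$. For $n=0$ there is nothing to prove, since $\mathcal{H}_0=\langle H\rangle$ and $H$ was chosen generic over $V[G_0]$; so assume $n\geq 1$. We regard $\mathcal{H}_n$ as a single generic filter for the product $\prod_{m\leq n}j_{m,n}(k_m(\mathbb{Q}_\omega))$, which is $j_n(\kappa^+)$-closed in $M_n$, being the $k_n$-image of the $i_n(\kappa^+)$-closed product $\prod_{m\leq n}i_{m,n}(\mathbb{Q}_\omega)$ of $N_n$; since the argument behind Remark~\ref{H-mutually-generic} uses only closure of the forcing carrying $H$, it applies verbatim to such products. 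The first hypothesis of that remark, that $\mathcal{H}_n$ is generic over $M_n$, is exactly what was established just above: $\mathcal{H}_n={<}k_n``\bar{\mathcal{H}}_n{>}$, $\bar{\mathcal{H}}_n$ is generic over $N_n$, $k_n$ has width $<i_n(\kappa)$, and Lemma~\ref{Lem:widthgen} finishes it. So the only thing left to verify is the second hypothesis: that $\mathcal{H}_n$ and the upward closure ${<}j_n``G_0{>}$ are mutually generic over $M_n$.

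The reduction I have in mind is as follows. Since $k_n\circ i_{m,n}=j_{m,n}\circ k_m$ and $k_n\circ i_n=j_n$, we have $\mathcal{H}_n={<}k_n``\bar{\mathcal{H}}_n{>}$ and ${<}j_n``G_0{>}={<}k_n``({<}i_n``G_0{>}){>}$, with $k_n$ carrying $\prod_{m\leq n}i_{m,n}(\mathbb{Q}_\omega)$ onto $\prod_{m\leq n}j_{m,n}(k_m(\mathbb{Q}_\omega))$ and $i_n(\mathbb{P}_E^*)$ onto $j_n(\mathbb{P}_E^*)$. The product $\bigl(\prod_{m\leq n}i_{m,n}(\mathbb{Q}_\omega)\bigr)\times i_n(\mathbb{P}_E^*)$ is $i_n(\kappa^+)$-closed in $N_n$, and $k_n$ has width $\leq i_{n-1}(\kappa^{++})<i_n(\kappa^+)$; so by Lemma~\ref{Lem:widthgen}, if $(\bar{\mathcal{H}}_n,{<}i_n``G_0{>})$ is $N_n$-generic for this product then its $k_n$-image generates an $M_n$-generic filter for the image product, that is, $(\mathcal{H}_n,{<}j_n``G_0{>})$ is $M_n$-generic. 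In particular ${<}j_n``G_0{>}$ is then $M_n$-generic for $j_n(\mathbb{P}_E^*)$, so the hypothesis of Remark~\ref{H-mutually-generic} is meaningful. Thus it suffices to show that $\bar{\mathcal{H}}_n$ and ${<}i_n``G_0{>}$ are mutually generic over $N_n$.

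For this the essential input is that $H$ was taken generic over $V[G_0]$, so $H$ and $G_0$ are mutually generic over $V=N_0$; equivalently, $(H,G_0)$ is $N_0$-generic for the $\kappa^+$-closed product $\mathbb{Q}_\omega\times\mathbb{P}_E^*$. I would then run the very argument that establishes the genericity of $\bar{\mathcal{H}}_n$ over $N_n$ — passing through the iteration and using, at stage $m\to m+1$, that $i_{m,m+1}$ has width $\leq i_m(\kappa)$, well below the closure of the relevant product of shifted copies of $\mathbb{Q}_\omega$, together with Lemma~\ref{Lem:widthgen} — but carrying along the copy $i_n(\mathbb{P}_E^*)$ of $\mathbb{P}_E^*$ with its single image ${<}i_n``G_0{>}$, which is moved by the same width bounds. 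This should yield that $(\bar{\mathcal{H}}_n,{<}i_n``G_0{>})$ is $N_n$-generic for $\bigl(\prod_{m\leq n}i_{m,n}(\mathbb{Q}_\omega)\bigr)\times i_n(\mathbb{P}_E^*)$, which is exactly what is needed. I expect the main obstacle to be precisely this strengthening of the shift-genericity argument to carry $G_0$ along: verifying that the mutual genericity of $H$ and $G_0$ over $V$ survives the simultaneous transport along the normal-measure iteration $\langle i_{m,n}\rangle$ and the factor maps $k_n$. The remaining subtlety — that $G_n$ is a Woodin-style surgical modification of ${<}j_n``G_0{>}$ rather than ${<}j_n``G_0{>}$ itself (Remark~\ref{combined-alterations}) — is already absorbed into Remark~\ref{H-mutually-generic}, whose proof reruns the surgery argument in the $j_n(\kappa^+)$-closed extension by $\mathcal{H}_n$.
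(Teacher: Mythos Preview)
Your proposal is correct and follows essentially the same route as the paper: reduce via Remark~\ref{H-mutually-generic} to the mutual genericity of $\mathcal{H}_n$ and ${<}j_n``G_0{>}$ over $M_n$, pull this back along $k_n$ (using the width bound and Lemma~\ref{Lem:widthgen}) to the mutual genericity of $\bar{\mathcal{H}}_n$ and ${<}i_n``G_0{>}$ over $N_n$, and obtain the latter by rerunning the shift-genericity argument (the paper cites Claim~18 of \cite{HU}) starting from the mutual genericity of $H$ and $G_0$ over $V$. Your write-up is more explicit about the product-transfer step, but the structure and the key inputs are identical.
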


\begin{proof} Since $G_0$ and $H$ are mutually generic over $V$, we can repeat
the argument from Claim 18 of \cite{HU} to see that $\bar{\mathcal{H}}_n$ is
generic over the model $N_n[{<}i_n``G_0{>}]$.  By the previous claim, ${<}j_n``G_0{>}$ and
$\mathcal{H}_n$ are mutually generic over $M_n$.  By Remark
\ref{H-mutually-generic}, $\mathcal{H}_n$ is mutually generic with $G_n$.
\end{proof}

\begin{lemma} $M_\omega[G_\omega][\mathcal{H}]$ is closed under
$\kappa$-sequences. \end{lemma}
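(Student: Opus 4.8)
The plan is to mimic the strategy used for $M_\omega[G^*]$ in the uncountable-cofinality case (Proposition \ref{proposition: closure}) and for $N_\omega[\mathcal{H}]$ in the survey of \cite{HU}: establish the ``critical equality''
\[ M_\omega[G_\omega][\mathcal{H}] = \bigcap_{n<\omega} M_n[G_n][\mathcal{H}_n], \]
and then read off $\kappa$-closure. First I would verify the easy inclusion $\subseteq$: since $\mathcal{H}$ is defined from $H$ via the maps $j_{m,\omega}\circ k_m$, and since for each fixed $n$ the initial segment $\langle {<}j_{m,\omega}\circ k_m``H{>} \mid m\le n\rangle$ is the $j_{n,\omega}$-image of $\mathcal{H}_n$ (using $k_\omega\circ i_{n,\omega} = j_{n,\omega}\circ k_n$ and the commutativity of the $j$-iteration), each $M_n[G_n][\mathcal{H}_n]$ contains enough information to recover, by applying $j_{n,\omega}$ and taking the direct limit, every element of $M_\omega[G_\omega][\mathcal{H}]$; combined with Lemma \ref{intersection1} this gives $M_\omega[G_\omega][\mathcal{H}] \subseteq \bigcap_n M_n[G_n][\mathcal{H}_n]$.

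For the nontrivial inclusion $\supseteq$, I would take a set of ordinals $x\in\bigcap_n M_n[G_n][\mathcal{H}_n]$ and let $x_n = \{\alpha \mid j^*_{n,\omega}(\alpha)\in x\}$ where $j^*_{n,\omega}:M_n[G_n][\mathcal{H}_n]\to M_\omega[G_\omega][i_{n,\omega}``\mathcal{H}_n\cdots]$ is the lifted embedding (here one must first check that $j_{n,\omega}$ lifts to $M_n[G_n][\mathcal{H}_n]$ — this follows because $\mathcal{H}_n$ is generic for $i_n(\kappa^+)$-closed forcing and $j_{n,\omega}$ has critical point $j_n(\kappa)$, and because $G_n$ was constructed precisely so that $G_\omega$ is its extender-based completion, exactly as in Lemma \ref{intersection1}). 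Pick names $\dot x_n$ for $x_n$, promote them to $j_n(\mathbb{P}_E)\times\mathbb{Q}$-names $\dot x_n^*$ by adjoining trivial tree conditions, and observe that $\alpha\in x_n$ iff $j^*_{n,\omega}(\alpha)$ lies in the evaluation of $j^*_{n,\omega}(\dot x_n^*)$ by $G_\omega\times\mathcal{H}$. Since $M_\omega[G_\omega]$ is closed under $\omega$-sequences (the previous Claim) and $\mathcal{H}\in M_\omega[G_\omega][\mathcal{H}]$, the sequence $\langle$ evaluation of $j^*_{n,\omega}(\dot x_n^*)\rangle_{n<\omega}$ belongs to $M_\omega[G_\omega][\mathcal{H}]$, and then $x$ is definable there by ``$\zeta\in x$ iff for all large $n$, $\zeta$ is in the $n$-th evaluation.'' This gives $x\in M_\omega[G_\omega][\mathcal{H}]$, completing the equality.

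Finally, $\kappa$-closure follows from the equality by a counting argument: given $\langle z_\xi\mid\xi<\kappa\rangle$ in $M_\omega[G_\omega][\mathcal{H}]$, for each $n$ this sequence lies in $M_n[G_n][\mathcal{H}_n]$, which is $i_n(\kappa)$-closed (being a set-forcing extension of $M_n$, which is closed under $i_n(\kappa)$-sequences since $M_n = \Ult$ along finitely many $(\kappa,\kappa^{++})$-extenders, and $G_n\times\mathcal{H}_n$ is generic for forcing of size less than $i_n(\kappa)$ that is $\kappa^+$-directed-closed above the relevant point); hence the sequence lies in every $M_n[G_n][\mathcal{H}_n]$ as a single object, so by the equality it lies in $M_\omega[G_\omega][\mathcal{H}]$. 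The main obstacle I expect is the bookkeeping in the lifting step: making sure $j_{n,\omega}$ genuinely lifts to the combined extension $M_n[G_n][\mathcal{H}_n]$ with image inside $M_\omega[G_\omega][\mathcal{H}]$, which requires combining the Woodin-surgery description of $G_\omega$ over $G_n$ with the width computation for the $k_n$'s (the preceding Claim and Lemma \ref{Lem:widthgen}) and the mutual-genericity Claim for $\mathcal{H}_n$ and $G_n$ — everything else is a routine reprise of the arguments already in place.
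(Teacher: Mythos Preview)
Your approach takes a long detour that the paper avoids entirely. The paper's proof is four lines: any $\kappa$-sequence of ordinals from $M_\omega[G_\omega][\mathcal{H}]$ lies in the ambient model $V[G_0][H]$; since $H$ is generic for the $\kappa^+$-closed forcing $\mathbb{Q}_\omega$, the sequence already lies in $V[G_0]$; and the earlier Claim gives that $M_\omega[G_\omega]$ is closed under $\kappa$-sequences in $V[G_0]$. Done --- the sequence is in $M_\omega[G_\omega] \subseteq M_\omega[G_\omega][\mathcal{H}]$, and the critical equality is not needed at all.

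Your plan instead proves the critical equality $M_\omega[G_\omega][\mathcal{H}] = \bigcap_n M_n[G_n][\mathcal{H}_n]$ first and then derives closure from it. Two problems arise. First, in the paper this equality is the \emph{next} lemma (Lemma~\ref{intersection2}), and its proof explicitly invokes the present closure lemma to place the sequence $\langle j_{n,\omega}(\dot{x}_n^*)\mid n<\omega\rangle$ into $M_\omega[G_\omega][\mathcal{H}]$. So you are reversing the logical order; for that to be non-circular you must supply an independent proof of the equality. Your suggestion to use only closure of $M_\omega[G_\omega]$ works only if the names $\dot{x}_n^*$ are taken in $M_n$ (for the product forcing) rather than in $M_n[\mathcal{H}_n]$ as the paper does --- this should be stated explicitly, and you must check that evaluating $j_{n,\omega}(\dot{x}_n^*)$ by $G_\omega$ together with the appropriate initial segment of $\mathcal{H}$ really recovers the image of $x_n$.

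Second, and more concretely, your final closure step contains a false claim: $M_n$ is \emph{not} closed under $j_n(\kappa)$-sequences from $V$ (you also write $i_n$, but the normal-measure iteration is not the one in play). Iterating a $(\kappa,\kappa^{++})$-extender $n$ times yields a model closed under $\kappa$-sequences from $V$, nothing more. The correct reason your $\kappa$-sequence $\vec{z}$ lies in each $M_n[G_n][\mathcal{H}_n]$ is that $\vec{z}\in V[G_0]$ (by $\kappa^+$-closure of $\mathbb{Q}_\omega$) and then $\vec{z}\in M_\omega[G_\omega]\subseteq M_n[G_n]$ by the earlier Claim and Lemma~\ref{intersection1} --- but that is exactly the paper's direct argument, which renders the entire detour through the critical equality redundant.
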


\begin{proof} It is enough to show that every $\kappa$-sequence of ordinals from the larger model
$M_\omega[G_\omega][\mathcal{H}]$ is in $M_\omega[G_\omega]$.  Let
$\vec{\alpha}$ be such a sequence.  By construction, $\vec{\alpha} \in
V[G_0][H]$ and hence in $V[G_0]$.  However, $M_\omega[G_\omega]$ is closed under
$\kappa$-sequences in $V[G_0]$, so $\vec{\alpha} \in M_\omega[G_\omega]$, as
required. \end{proof}

\begin{lemma} \label{intersection2} $\bigcap_{n<\omega}M_n[G_n][\mathcal{H}_n] =
M_\omega[G_\omega][\mathcal{H}]$. \end{lemma}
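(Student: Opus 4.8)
The plan is to mirror the proof of Lemma \ref{intersection1}, adding the $\mathcal{H}_n$ layer on top of the $G_n$ layer. First I would establish the easy inclusion $M_\omega[G_\omega][\mathcal{H}] \subseteq \bigcap_n M_n[G_n][\mathcal{H}_n]$: by Lemma \ref{intersection1} we already have $M_\omega[G_\omega]\subseteq M_n[G_n]$ for each $n$, and by construction $\mathcal{H} = \langle {<}j_{m,\omega}\circ k_m``H{>} \mid m<\omega\rangle$ while $\mathcal{H}_n = \langle {<}j_{m,n}\circ k_m``H{>}\mid m\leq n\rangle$; since $j_{n,\omega}\circ j_{m,n} = j_{m,\omega}$, applying $i_{n,\omega}^*$ (the lift of $j_{n,\omega}$ to $M_n[G_n][\mathcal{H}_n]$, which exists because $\mathcal{H}_n$ is generic for suitably closed forcing and $j_{n,\omega}$ has the relevant closure/width properties) sends $\mathcal{H}_n$ to an initial segment of $\mathcal{H}$, so $\mathcal{H}\uhr n$ — equivalently all of the relevant coordinates below $n$ — is computable, and together with $G_\omega$ and the $\omega$-closure of $M_\omega[G_\omega][\mathcal{H}]$ one recovers $\mathcal{H}$ itself inside each $M_n[G_n][\mathcal{H}_n]$. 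So $M_\omega[G_\omega][\mathcal{H}] \subseteq M_n[G_n][\mathcal{H}_n]$ for every $n$.

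For the reverse inclusion I would run the same argument as in Lemma \ref{intersection1} but over the enlarged models. Let $x \in \bigcap_n M_n[G_n][\mathcal{H}_n]$ be a set of ordinals, and put $x_n = \{\alpha \mid j_{n,\omega}(\alpha) \in x\}$, which lies in $M_n[G_n][\mathcal{H}_n]$. Here I would use the factor embedding: $M_\omega[G_\omega][\mathcal{H}]$ arises from $M_n[G_n][\mathcal{H}_n]$ by continuing the iteration, so there is an elementary $j_{n,\omega}^* : M_n[G_n][\mathcal{H}_n] \to M_\omega[G_\omega][\mathcal{H}]$ extending $j_{n,\omega}$, and $\alpha \in x_n$ iff $j_{n,\omega}(\alpha) \in j_{n,\omega}^*(x_n) = x$. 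Taking a name $\dot{x}_n$ for $x_n$ over the appropriate forcing (the iterate of $\mathbb{P}_E^* \times$ the $\mathbb{Q}_\omega$-type forcing) and promoting it to a $j_n(\mathbb{P}_E)$-type name $\dot{x}_n^*$ by padding with tree conditions exactly as before, I get that membership in $x$ of a given ordinal $\zeta$ is decided, for all large $n$, by whether $\zeta \in j_{n,\omega}(\dot{x}_n^*)$ evaluated by $G_\omega \times \mathcal{H}$. Since $M_\omega[G_\omega][\mathcal{H}]$ is closed under $\omega$-sequences (immediate from the previous lemma), the sequence of these evaluations lies in $M_\omega[G_\omega][\mathcal{H}]$, and hence $x$ is definable there. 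This gives $\bigcap_n M_n[G_n][\mathcal{H}_n] \subseteq M_\omega[G_\omega][\mathcal{H}]$, completing the proof.

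The main obstacle I anticipate is bookkeeping around the two-component nature of the forcing and the lift $j_{n,\omega}^*$: one must be sure that $j_{n,\omega}$ genuinely lifts to the full extension $M_n[G_n][\mathcal{H}_n]$, not merely to $M_n[G_n]$ or $M_n[\mathcal{H}_n]$ separately, and that the lift is compatible with the already-constructed generics $G_\omega$ and $\mathcal{H}$. For the $G$-part this is exactly what was arranged in the construction of $G_\omega$ (Lemma \ref{intersection1}); for the $\mathcal{H}$-part one invokes that $\mathcal{H}_n = {<}k_n``\bar{\mathcal{H}}_n{>}$ with $\bar{\mathcal{H}}_n$ generic for $i_n(\kappa^+)$-closed forcing over $N_n$, so the standard lifting criterion (Lemma \ref{Lem:widthgen}, using $\mathrm{width}(k_n) < i_n(\kappa)$) applies and the images cohere along the iteration because the $\mathbb{Q}_\omega$-type forcings satisfy the same closure at every stage and $j_{m,\omega}\circ k_m$ was built precisely so that these generics commute with the maps. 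Once the lift is in hand, the rest is the routine name-padding and $\omega$-closure argument already used for $M_\omega[G_\omega]$, now carried out one level up. I would also remark that, just as in the countable-cofinality template from \cite{HU} recalled above, this critical equality is what subsequently yields that $M_\omega[G_\omega][\mathcal{H}]$ is closed under $\kappa$-sequences, that $\kappa_\omega$ is singular there, and that $H$ itself belongs to the final model.
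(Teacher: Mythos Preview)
Your overall strategy matches the paper's, but there is a genuine confusion about the lift of $j_{n,\omega}$. You assert an elementary $j_{n,\omega}^*: M_n[G_n][\mathcal{H}_n] \to M_\omega[G_\omega][\mathcal{H}]$ extending $j_{n,\omega}$, and your ``obstacle'' paragraph is devoted to securing it. But no such lift exists in the standard sense: $G_n$ is generic for $j_n(\mathbb{P}_E^*)$, the Cohen poset, whereas $G_\omega$ is generic for $j_\omega(\mathbb{P}_E)$, the extender-based Prikry poset. The pointwise image ${<}j_{n,\omega}``G_n{>}$ generates a $j_\omega(\mathbb{P}_E^*)$-generic, and there is no reason $M_\omega[{<}j_{n,\omega}``G_n{>}]$ is contained in $M_\omega[G_\omega]$. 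So the target model you want simply is not the codomain of any lift over $G_n$.

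The paper sidesteps this by lifting $j_{n,\omega}$ only over the $\mathcal{H}_n$ factor, obtaining $j_{n,\omega}: M_n[\mathcal{H}_n] \to M_\omega[{<}j_{n,\omega}``\mathcal{H}_n{>}] \subseteq M_\omega[\mathcal{H}]$; this works because $\mathcal{H}_n$ is generic for sufficiently closed forcing. The $G_n$-layer is then handled entirely by the name translation you also mention: working in $M_n[\mathcal{H}_n]$, take a $j_n(\mathbb{P}_E^*)$-name $\dot{x}_n$ for $x_n$, pad it to a $j_n(\mathbb{P}_E)$-name $\dot{x}_n^*$, push the \emph{name} through the lifted $j_{n,\omega}$ into $M_\omega[\mathcal{H}]$, and only then evaluate by $G_\omega$. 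The point is that the name already lives in $M_n[\mathcal{H}_n]$, so the partial lift suffices; you never need $j_{n,\omega}$ defined on $M_n[G_n]$. Once you drop the full-lift claim and organize the argument this way, the rest of your sketch (the $\omega$-closure of $M_\omega[G_\omega][\mathcal{H}]$ and the eventual-agreement definition of $x$) goes through exactly as in the paper. One minor correction to your final remark: in this paper the $\kappa$-closure of $M_\omega[G_\omega][\mathcal{H}]$ is established \emph{before} this lemma and used in its proof, not derived from it.
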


\begin{proof} The proof is similar to the proof of Lemma \ref{intersection1}.
First, note that the embedding $j_{n,\omega}\colon M_n \to M_\omega$ lifts to an elementary embedding
\[j^*_{n,\omega}\colon M_n[\mathcal{H}_n] \to M_\omega[{<}j_{n,\omega}``\mathcal{H}_n{>}],\] and
$M_\omega[{<}j_{n,\omega}``\mathcal{H}_n{>}] \subseteq M_{\omega}[\mathcal{H}]$.
Further, the construction of $G_\omega$ is the same whether we start in $M_n$ or
$M_n[\mathcal{H}_n]$.

It is immediate that $\bigcap_{n<\omega}M_n[G_n][\mathcal{H}_n] \supseteq
M_\omega[G_\omega][\mathcal{H}]$.  For the other inclusion, we work as before
and suppose that $x \in \bigcap_{n<\omega}M_n[G_n][\mathcal{H}_n]$ is a set of
ordinals.  For each $n$, we can define $y_n = \{ \gamma \mid
j_{n,\omega}(\gamma) \in x \}$.  Continuing as before, but working in
$M_n[\mathcal{H}_n]$, we have that $y_n = \dot{y}_n^{G_n}$ for some name
$\dot{y}_n$.


By the previous lemma, $M_\omega[G_\omega][\mathcal{H}]$ has the sequence $\l
j^*_{n,\omega}(\dot{x}_n^*) \mid n<\omega \r$. 
As in the proof of Lemma \ref{intersection1}, we will now argue that for every suitable set of ordinals $a \subseteq j_\omega(\kappa^{++})$ of cardinality $j_\omega(\kappa)$, there is a unique (up to finite shift and finite modification) sequence $\langle t_n \mid n < \omega\rangle$ which is generic for the Prikry forcing for the measure $j_\omega(E)(a)$ and satisfies that $q ^\smallfrown \langle t_n \mid n < \omega\rangle \in G_\omega$ for all $n$. 

Since there is $p\in G$ such that for $j_{\omega}(p)$ the sequence $\langle \nu_n \restriction a \mid n < \omega\rangle$ will be generic for the corresponding forcing, we conclude (by uniqueness) that for every such sequence, for all large $n$, \[q ^\smallfrown \langle t_n \mid n < \omega\rangle \in j_{n,\omega}(G_n)\]   
assume that the length of $q$ at $j_{\omega}(\kappa)$ is zero, so there is no shift relative to the indexing of the $G_n$-s.

For each ordinal $\gamma$ one can take $a$ large enough so that for all $n$, the truth value of $\gamma \in j_{n,\omega}(\dot{y}_n)$ is decided by $q ^\smallfrown \langle t_i \mid i < n\rangle$. Therefore, $\gamma \in x$ if and only if there is $q \in G_\omega$ and $\langle t_n \mid n < \omega\rangle$ Prikry generic for $\dom q$ as above such that for all large $n$,   
\[q ^\smallfrown \langle t_i \mid i < n\rangle \Vdash \check{\gamma} \in j_{n,\omega}(\dot{y}_n).\]
\end{proof}

Let $k_\omega\colon N_\omega \to M_\omega$ be the natural elementary embedding.  Note
that $k_\omega$ naturally extends to an embedding between the two  Prikry generic extensions,
\[k^*_\omega\colon N_\omega[\l i_m(\kappa) \mid m < \omega\r] \longrightarrow
M_\omega[\l j_m(\kappa) \mid m<\omega \r].\]

Similarly, For each $n<\omega$ we define $W_{n,\omega}$ to be the limit ultrapower obtained by starting in $M_n$ and iterating the measure $j_n(E_\kappa)$, the normal measure of $j_n(E)$. As with $N_\omega$. Denote the critical points of the iteration by $j_n(E_\kappa)$ and its images by $\l \kappa^n_m \r_{m<\omega}$, and let $i^n_\omega \colon  V \to W_{n,\omega}$ denote the composition of the finite ultrapower map $j \colon  V \to M_n$ with the latter infinite iteration map by the normal measures. 
Therefore elements $y_n \in W_{n,\omega}$ are of the form 
$i^n_\omega(f)(\alpha_0,\dots,\alpha_{n-1},\kappa^n_0,\dots,\kappa^n_{m-1})$ 
for some $n,m < \omega$, $f \colon  \kappa^{n+m} \to V$ in $V$, 
and $\alpha_{\ell} \in j_{\ell}(\kappa^{++})$ for each $\ell < n$.

The structures $W_{n,\omega}$ are naturally connected via maps
$k_{n,r} \colon W_{n,\omega} \to W_{r,\omega}$ for $n \leq r < \omega$, given by 
\[ \begin{matrix}
k_{n,r}(i^n_\omega(f)(\alpha_0,\dots,\alpha_{n-1},\kappa^n_0,\dots,\kappa^n_{m-1}) ) \\ 
= \,\, i^r_\omega(f)(\alpha_0,\dots,\alpha_{n-1},\kappa^r_0,\dots,\kappa^r_{m-1})
\end{matrix}
\]

It is straightforward to verify that the limit of the directed system 
\[\{ W_{n,\omega}, k_{n,r} \mid n \leq r < \omega\}\] is 
$M_\omega$, and the direct limit maps
$k_{n,\omega} \colon  W_{n,\omega} \to M_\omega$, which are defined by 

\[
\begin{matrix}
k_{n,r}\left( i^n_\omega(f)(\alpha_0,\dots,\alpha_{n-1},\kappa^n_0,\dots,\kappa^n_{m-1})  \right) & \\
   \, \ = j_\omega(f)(\alpha_0,\dots,\alpha_{n-1}, j_{n}(\kappa), j_{n+1}(\kappa),\dots, j_{n+m-1}(\kappa))&  
\end{matrix}
\]

${}$\\
naturally extend to the generic extensions by the suitable Prikry sequences. Indeed, denote $W_{n,\omega}[\l j_m(\kappa) \mid m<n \r \fr \l i_m(j_n(\kappa)) \mid m < \omega\r]$ by $W^*_{n,\omega}$, for each $n  <\omega$.
Then $k_{n,\omega}$ lifts to: 
\[k^*_{n,\omega}\colon W_{n,\omega}^*\longrightarrow M_\omega[\l j_m(\kappa) \mid m<\omega\r ].\]

Finally, we note that following implies that 
$M_\omega[\l j_m(\kappa) \mid m < \omega \r]$ is the direct limit of the system of Prikry generic extensions  
\[\l W_{n,\omega}^*, k^*_{n,k} \mid n \leq k < \omega\r.\]

\begin{claim} \label{komegaH} ${<}k^*_\omega``H{>} \in M_\omega[G_\omega][\mathcal{H}]$
is 
generic for $j_\omega(\dot{\qo})^{\l j_n(\kappa) \mid n<\omega \r}$ over
$M_\omega[\l j_n(\kappa)\mid n<\omega \r]$. 
\end{claim}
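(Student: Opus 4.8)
The claim has two parts. First, $\langle k^*_\omega``H\rangle$ is a filter on $k^*_\omega(\mathbb{Q}_\omega)$, which equals $j_\omega(\dot{\mathbb{Q}})^{\langle j_n(\kappa)\mid n<\omega\rangle}$ because $k^*_\omega$ extends $k_\omega$, $k_\omega\circ i_\omega = j_\omega$, and $k^*_\omega$ carries the critical sequence $\langle i_n(\kappa)\mid n<\omega\rangle$ to $\langle j_n(\kappa)\mid n<\omega\rangle$; this filter is to be shown generic over $M_\omega[\langle j_n(\kappa)\mid n<\omega\rangle]$. Second, it is to be shown to lie in $M_\omega[G_\omega][\mathcal{H}]$. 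The plan is to obtain genericity from the presentation of $M_\omega[\langle j_n(\kappa)\mid n<\omega\rangle]$ as the direct limit of the Prikry extensions $W^*_{n,\omega}$ along the commuting maps $k^*_{n,k}$, with $W^*_{0,\omega}=N_\omega[\langle i_n(\kappa)\mid n<\omega\rangle]$ and $k^*_{0,\omega}=k^*_\omega$, and to obtain membership from the critical equality of Lemma \ref{intersection2}.

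For genericity, since $N_\omega[\langle i_n(\kappa)\mid n<\omega\rangle]=\bigcap_n N_n\subseteq V\subseteq V[G_0]$ and $H$ is $\mathbb{Q}_\omega$-generic over $V[G_0]$, it is in particular $\mathbb{Q}_\omega$-generic over $W^*_{0,\omega}$. I would then push $H$ up the directed system: the factor map $k^*_{0,n}\colon W^*_{0,\omega}\to W^*_{n,\omega}$ has width $<j_\omega(\kappa)$ — an element of $W_{n,\omega}$ has the form $i^n_\omega(f)(\alpha_0,\dots,\alpha_{n-1},\kappa^n_0,\dots,\kappa^n_{m-1})$ with $f\colon\kappa^{n+m}\to V$ in $V$, where the $\kappa^n_\ell$ are critical points of the normal-measure part of the iteration (hence in the range of $k_{0,n}$) and each extender generator $\alpha_\ell$ is bounded by $j_{n-1}(\kappa^{++})<j_n(\kappa)=k_{0,n}(\kappa)$, so a suitable restriction of $i_\omega(f)$ in $N_\omega$ represents the element using a function of domain size $\kappa$, and the Prikry sequence of $W^*_{0,\omega}$ maps through without increasing this bound — while, by the analysis of $\mathbb{Q}_\omega$ in \cite{HU} (it is forcing equivalent to $\Add(i_n(\kappa^+),1)$ over $N_n$ for every $n$), $\mathbb{Q}_\omega$ is $\kappa^+$-distributive, indeed far more closed, over $W^*_{0,\omega}=\bigcap_n N_n$. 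Hence Lemma \ref{Lem:widthgen} applies and $\langle k^*_{0,n}``H\rangle$ is $k^*_{0,n}(\mathbb{Q}_\omega)$-generic over $W^*_{n,\omega}$. Finally, given a dense open $D\subseteq k^*_\omega(\mathbb{Q}_\omega)$ in $M_\omega[\langle j_n(\kappa)\mid n<\omega\rangle]$, reflect it to a dense open $\bar{D}\in W^*_{n,\omega}$ with $k^*_{n,\omega}(\bar{D})=D$, choose $q\in\bar{D}\cap\langle k^*_{0,n}``H\rangle$, and use $k^*_{n,\omega}\circ k^*_{0,n}=k^*_\omega$ to conclude that $k^*_{n,\omega}(q)\in D$ extends an element of $k^*_\omega``H$; so $\langle k^*_\omega``H\rangle$ meets $D$.

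For membership, by Lemma \ref{intersection2} the model $M_\omega[G_\omega][\mathcal{H}]=\bigcap_n M_n[G_n][\mathcal{H}_n]$ is closed under $\kappa$-sequences, and since the coordinates of $\mathcal{H}$ record the images $\langle j_{m,\omega}\circ k_m``H\rangle$ of $H$ under the relevant embeddings, one recovers $\langle k^*_\omega``H\rangle$ inside this $\kappa$-closed model from those data, exactly as the club generic is located inside $N_\omega[\mathcal{H}]$ in the single-Prikry construction of \cite{HU}. I expect the main obstacle to be the genericity half, and within it the point that $\mathbb{Q}_\omega$ must retain $\kappa^+$-distributivity over the $\omega$-th iterate $\bigcap_n N_n$: over $V$ itself the club-shooting poset is only equivalent to $\Add(\kappa^+,1)$, so this is genuinely a feature of having passed to $\bigcap_n N_n$, and is where we lean on the corresponding analysis in \cite{HU}. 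The other point requiring care is the width computation for the maps $k^*_{0,n}$, whose content is just that only a finite initial block of extender generators, all of bounded order type below $j_\omega(\kappa)$, needs to be absorbed while the remaining iteration is by normal measures.
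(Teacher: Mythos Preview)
Your overall plan---reflect a dense set $D \in M_\omega[\langle j_n(\kappa)\mid n<\omega\rangle]$ down to some $W^*_{n,\omega}$ via the direct-limit presentation and meet it there---is exactly the paper's. The divergence is in how you establish genericity at level $n$ and how you secure membership in $M_\omega[G_\omega][\mathcal{H}]$; in both places the paper takes a shorter route and your sketch has loose ends.

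For genericity, you try to apply Lemma~\ref{Lem:widthgen} directly to $k^*_{0,n}\colon W^*_{0,\omega}\to W^*_{n,\omega}$. This requires matching the width of $k^*_{0,n}$ against the distributivity of $\mathbb{Q}_\omega$ \emph{over} $W^*_{0,\omega}$, and neither quantity is pinned down in your sketch: the assertion that the representing function has ``domain size $\kappa$'' is not justified (the extender seeds $\alpha_\ell$ range over $(\kappa^{++})^V$, so the natural restriction of $i_\omega(f)$ has domain of that size, not $\kappa$), the identity $j_n(\kappa)=k_{0,n}(\kappa)$ is asserted without argument, and your own bounds---width ${<}\,j_\omega(\kappa)$ versus $\kappa^+$-distributivity---do not match as stated. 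The paper sidesteps all of this. It has already proved, using the width of the finite-level map $k_n\colon N_n\to M_n$ (at most $(i_{n-1}(\kappa)^{++})^{N_n}$) together with the fact that $\mathbb{Q}_\omega$ is equivalent to $\Add(i_n(\kappa^+),1)$ in $N_n$, that $\langle k_n``H\rangle$ is $k_n(\mathbb{Q}_\omega)$-generic over $M_n$. Since $W^*_{n,\omega}\subseteq M_n$ and $k_n(\mathbb{Q}_\omega)\in W^*_{n,\omega}$, genericity over $W^*_{n,\omega}$ is immediate, and the reflected $\bar D$ is met there. In other words, the width computation you flag as the main obstacle is already done one level up, at $k_n\colon N_n\to M_n$, where the closure of $\mathbb{Q}_\omega$ is transparent.

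For membership, your appeal to $\kappa$-closure and ``the coordinates of $\mathcal{H}$'' does not obviously yield $\langle k^*_\omega``H\rangle$: the $m$-th coordinate of $\mathcal{H}$ is $\langle j_{m,\omega}\circ k_m``H\rangle$, and $j_{m,\omega}\circ k_m$ is not $k^*_\omega$ on $H$, so a recovery procedure is still owed. The paper's argument is a one-liner from the factorization $k^*_\omega = k^*_{n,\omega}\circ k^*_n$: the filter $\langle k^*_n``H\rangle=\langle k_n``H\rangle$ is literally the $n$-th coordinate of $\mathcal{H}_n$, and $k^*_{n,\omega}$ is definable in $M_n$, so $\langle k^*_\omega``H\rangle\in M_n[\mathcal{H}_n]$ for every $n$; then Lemma~\ref{intersection2} finishes.
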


\begin{proof} 

Let $D \in M_\omega[\l j_n(\kappa) \mid n < \omega \r]$ be a dense open subset
of the forcing $j_\omega(\dot{\qo})^{\l j_n(\kappa) \mid n <\omega \r}$.  Then there are
$n<\omega$ and $\bar{D} \in W_{n,\omega}^*$ such that $k^*_{n,\omega}(\bar{D}) = D$.
It follows from our arguments above that $<k_n``H>$ is generic for
$k_n(\mathbb{Q}_\omega)$ over $M_n$.  Now $k_{n}(\qo_\omega) \in W_{n,\omega}^*$ and
$\bar{D}$ is a dense subset of it.  So $<k_{n}``H> \cap \bar{D}$ is nonempty.  Let
$q \in H$ be such that $k_{n}(q) \in \bar{D}$.
It follows that $k^*_\omega(q) = k^*_{n,\omega}(k^*_n(q)) \in D$.
So we have shown that $<k^*_\omega``H>$ is generic over $M_\omega[\l j_n(\kappa)
\mid n <\omega \r]$.  It remains to show that it is a member of
$M_\omega[G_\omega][\mathcal{H}]$.  Since $<k^*_\omega``H> =
<k^*_{n,\omega}``<k^*_n``H>>$, we have that $<k^*_\omega``H> \in M_n[\mathcal{H}_n]$
for all $n<\omega$.  By Claim \ref{intersection2}, it follows that
$<k^*_\omega``H> \in M_\omega[G_\omega][\mathcal{H}]$. \end{proof}

To complete the proof of Theorem \ref{mainthm3} we need a finer control of the
relationship between $\mathbb{P}$ and $\mathbb{P}^*$ names.  To this end we make
some definitions.

\begin{definition} Let $f \in \mathbb{P}^*$, $\alpha$ be an ordinal and
$\dot{C}$ be a $\mathbb{P}^*$-name for a club subset of $\kappa^+$.  
We say that $f$
\emph{stably forces} $\check{\alpha} \in \dot{C}$ ($f \Vdash^s_{\mathbb{P}^*} \check{\alpha}
\in \dot{C}$) if every alteration of $f$ on fewer than $\kappa$ many coordinates
forces $\check{\alpha} \in \dot{C}$. \end{definition}

We have the following straightforward claims.

\begin{claim} If $g$ extends $f$ in $\po^*$ and $f \Vdash^s \check{\alpha} \in
\dot{C}$, then $g \Vdash^s \check{\alpha} \in \dot{C}$. \end{claim}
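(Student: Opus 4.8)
The claim asserts monotonicity of stable forcing under the extension order in $\po^*$: if $g \geq f$ and $f \Vdash^s \check\alpha \in \dot C$, then $g \Vdash^s \check\alpha \in \dot C$.

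\textbf{Plan of proof.} The key observation is that an alteration of $g$ on a set $X$ of fewer than $\kappa$ coordinates is itself an extension of an alteration of $f$ on a set of fewer than $\kappa$ coordinates. First I would unwind the definitions. Since $\po^* \cong \Add(\kappa^+,\kappa^{++})$, a condition $f$ is (essentially) a partial function with domain $d \in [\kappa^{++}]^{\leq\kappa}$, and $g \geq f$ means $\dom(g) \supseteq \dom(f)$ and $g \restriction \dom(f) = f$. An ``alteration of $g$ on fewer than $\kappa$ coordinates'' is a condition $g'$ with $\dom(g') = \dom(g)$ differing from $g$ on a set $X$ with $|X| < \kappa$. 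Given such a $g'$, let $X_0 = X \cap \dom(f)$, so $|X_0| < \kappa$, and let $f'$ be the alteration of $f$ agreeing with $g'$ on $X_0$ and with $f$ elsewhere on $\dom(f)$; then $|X_0| < \kappa$ so $f' \Vdash \check\alpha \in \dot C$ by hypothesis. Now I would check that $g' \geq f'$: indeed $\dom(g') = \dom(g) \supseteq \dom(f) = \dom(f')$, and for $\xi \in \dom(f')$, if $\xi \in X_0$ then $g'(\xi) = f'(\xi)$ by construction, while if $\xi \in \dom(f) \setminus X_0$ then $\xi \notin X$, so $g'(\xi) = g(\xi) = f(\xi) = f'(\xi)$. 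Since forcing is preserved upward, $g' \Vdash \check\alpha \in \dot C$. As $g'$ was an arbitrary alteration of $g$ on fewer than $\kappa$ coordinates, $g \Vdash^s \check\alpha \in \dot C$.

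\textbf{Main obstacle.} There is no serious obstacle; the only point requiring care is the bookkeeping that restricting the alteration set $X$ to $\dom(f)$ still has size $< \kappa$ (immediate, since $\kappa$ is regular and $|X| < \kappa$), and that the resulting $f'$ is genuinely a condition of $\po^* \cong \Add(\kappa^+,\kappa^{++})$ — which it is, since it has the same domain as $f$, hence size $\leq \kappa$, and altering values of a Cohen condition yields a Cohen condition. The argument is purely combinatorial and uses nothing beyond the definition of the extension order on $\po^*$ and the upward preservation of the forcing relation.
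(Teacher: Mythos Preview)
Your proof is correct and is exactly the natural argument the paper has in mind; the paper itself omits the proof, labeling the claim as ``straightforward,'' and your unpacking of the definition together with the observation that any small alteration of $g$ restricts to a small alteration of $f$ (which then forces $\check\alpha \in \dot C$, and is extended by the altered $g$) is precisely the intended justification.
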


\begin{claim} If $f \Vdash^s \check{\alpha} \in \dot{C}$, then for every finite
sequence $\vec{\nu}$ from some tree associated to $f$, $f \frown \vec{\nu}
\Vdash^s \check{\alpha} \in \dot{C}$.  If in addition $f = f' \frown \vec{\nu}$
for some finite sequence $\vec{\nu}$, then $f' \Vdash^s \check{\alpha} \in
\dot{C}$. \end{claim}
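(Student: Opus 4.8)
The plan is to reduce both assertions to a single observation: the one-point-extension operation $g \mapsto g \frown \vec\nu$ changes a condition on fewer than $\kappa$ coordinates and leaves its domain fixed, so ``$\Vdash^s$'' --- which quantifies over all alterations on $<\kappa$ coordinates --- cannot distinguish $f$ from $f \frown \vec\nu$ in either direction.

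First I would record the relevant bookkeeping about $\vec\nu = \langle \nu_0,\dots,\nu_{k-1}\rangle$: each $\nu_i$ is a partial order-preserving function with $\kappa \in \dom(\nu_i)$ and $|\dom(\nu_i)| = \nu_i(\kappa) < \kappa$, and by the displayed formula for $f_{\langle\nu\rangle}$ one has $\dom(f \frown \vec\nu) = \dom(f)$ while $(f\frown\vec\nu)(\gamma) \neq f(\gamma)$ only for $\gamma$ in the set $X_{\vec\nu} := \bigcup_{i<k}\dom(\nu_i)$, a set of size $<\kappa$ (a finite union of sets of size $<\kappa$). The same remark applies with $f'$ in place of $f$ when $f = f'\frown\vec\nu$: then $\dom(f') = \dom(f)$ and $f'$ and $f$ agree off $X_{\vec\nu}$.

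Then, for the first assertion, I would take an arbitrary alteration $g$ of $f \frown \vec\nu$ on a set $Y$ of fewer than $\kappa$ coordinates, note that $\dom(g) = \dom(f\frown\vec\nu) = \dom(f)$ and that $g$ agrees with $f$ off the set $Y \cup X_{\vec\nu}$, which still has size $<\kappa$; hence $g$ is itself an alteration of $f$ on fewer than $\kappa$ coordinates, so $g \Vdash \check\alpha \in \dot C$ by the hypothesis $f \Vdash^s \check\alpha \in \dot C$. Since $g$ ranged over all such alterations, $f \frown \vec\nu \Vdash^s \check\alpha \in \dot C$. The second assertion is proved by the mirror argument: an arbitrary alteration $g$ of $f'$ on $<\kappa$ coordinates has $\dom(g) = \dom(f') = \dom(f)$ and agrees with $f$ off a set of size $<\kappa$, hence is a $<\kappa$-alteration of $f$ and so forces $\check\alpha \in \dot C$; therefore $f' \Vdash^s \check\alpha \in \dot C$.

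I do not expect a genuine obstacle: the whole content is that an alteration is a domain-preserving change on fewer than $\kappa$ coordinates, that passing to $f\frown\vec\nu$ or back to $f'$ is exactly such a change, and that two such changes compose to one. The only points to be careful about are that the one-point extension does not enlarge the domain (so that $g$ above is literally an alteration of $f$, not merely a condition extending one) and that $\kappa$ is a cardinal so the moved coordinate-sets stay small under the finite unions involved. Alternatively, once one checks $f\frown\vec\nu \geq f$ in the Cohen order, the first assertion follows from the preceding claim; the direct argument above is preferred because it does not depend on whether $f\frown\vec\nu$ itself lies in $\mathbb{P}^*$.
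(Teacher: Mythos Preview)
Your argument is correct and is precisely the routine verification the paper has in mind when it labels this claim ``straightforward'' and omits the proof: the whole point is that $f \frown \vec{\nu}$ (and likewise $f'$ when $f = f' \frown \vec{\nu}$) differs from $f$ on a set of size $<\kappa$ without changing the domain, so $<\kappa$-alterations of one are $<\kappa$-alterations of the other. Your closing remark that one cannot simply invoke the previous claim because $f \frown \vec{\nu}$ need not extend $f$ in the Cohen order of $\mathbb{P}^*$ is well observed.
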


We define $\dot{C}^s = \{ (\check{\alpha},f) \mid f \Vdash^s \check{\alpha} \in
\dot{C} \}$.  Clearly it is forced that that $\dot{C}^s \subseteq \dot{C}$.  It
is also straightforward to see that $\dot{C}^s$ is forced to be closed.

\begin{claim}\label{claim: unbounded stable club} $\dot{C}^s$ is forced to be unbounded in $\kappa^+$. \end{claim}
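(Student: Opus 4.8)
The plan is to show that given any condition $p = \langle f, T\rangle \in \po_E$ and any ordinal $\beta < \kappa^+$, there is a direct extension $\langle f^*, T^*\rangle \geq^* p$ and an ordinal $\alpha$ with $\beta < \alpha < \kappa^+$ such that $f^* \Vdash^s \check\alpha \in \dot C$; density of $\dot C^s$ above $\beta$ then follows by passing to $f^*$ as a condition in $\po^*$. The key point is that $\dot{\qo}$ (hence each $\dot C$) is, in the Prikry extension, forcing-equivalent to $\Add(\kappa^+, 1)$, which is $\kappa^+$-distributive, and $\po_E$ has the Prikry property. So first I would use the Prikry property of $\po_E$: since $\dot C$ is a name for a club and $\po_E$ decides statements of the form ``$\check\gamma \in \dot C$'' after a direct extension that depends on finitely many coordinates of the measure-one tree, I can run the standard argument producing, for each candidate $\gamma$, a single direct extension deciding $\gamma \in \dot C$ along all tree branches below some finite level.

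The main work is to build, by a fusion-style recursion of length $\omega$ (or, more simply, a single direct-extension argument exploiting $\kappa^{++}$-many available Cohen coordinates), a direct extension $f^*$ whose decision about membership of some $\alpha$ in $\dot C$ is insensitive to altering $f^*$ on $<\kappa$ coordinates. Concretely, I would first extend $p$ to $p_1 \geq^* p$ and find $\alpha_1 > \beta$ with $p_1 \Vdash \check\alpha_1 \in \dot C$ (using that $\dot C$ is forced unbounded, a property of club names). Then, enumerating the $<\kappa$-sized alterations is impossible directly, but the crucial observation is that whether a condition forces $\check\alpha \in \dot C$ depends, via the equivalence with $\Add(\kappa^+,1)$ and the $\kappa^+$-distributivity, only on an initial segment of the $\Add(\kappa^+,1)$-generic of size $\leq \kappa$; altering $f$ on $<\kappa$ coordinates of the extender forcing perturbs this in a bounded way. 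I would capture this by working with a name $\dot C$ for which the value ``$\alpha \in \dot C$'' is computed from the generic for $\qo_\omega$ (equivalently $\Add(\kappa^+,1)$ over $V$) below some bound, and then choosing $\alpha$ large enough — above the sup of the finitely many ``dangerous'' coordinates — that every $<\kappa$-modification of $f^*$ still forces it in.

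Here is the cleaner route I actually favor: recall from the construction that $\dot{\qo}$-generics correspond to Cohen subsets of $\kappa^+$, and the relevant club $\dot C$ can be taken so that ``$\alpha \in \dot C$'' is decided by finitely much of the Prikry sequence together with a bounded-below-$\alpha$ part of the Cohen generic. Fix $p$; by the Prikry property get $p^* \geq^* p$ deciding, for cofinally many $\alpha$, the statement $\check\alpha \in \dot C$ — say $p^* \Vdash \check\alpha \in \dot C$ for some $\alpha > \beta$ whose witnessing information lives on a set of coordinates $\subseteq \dom(f^{p^*})$ of size $\leq \kappa$. Now strengthen $f^{p^*}$ once more so that on those specific $\leq\kappa$ coordinates the value is already the ``correct'' one forced by genericity; for any alteration of $f^{p^*}$ on $<\kappa$ coordinates, those coordinates that matter for $\alpha \in \dot C$ are either left alone (if we chose $\alpha$ with its support disjoint from a pre-selected block we reserve) or, being fixed by the strengthening, still give the right answer. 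Choosing $\alpha$ so that its ``support'' avoids any fixed block of size $\kappa$ is possible because there are $\kappa^{++}$ coordinates and cofinally many suitable $\alpha$. This yields $f^{p^*} \Vdash^s \check\alpha \in \dot C$ with $\alpha > \beta$, which is exactly density.

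The main obstacle I anticipate is making precise the claim that the witness to $\alpha \in \dot C$ ``lives on $\leq \kappa$ coordinates'' in a way compatible with the surgery/alteration operation — i.e., reconciling the Prikry-property analysis of $\po_E$-names with the $\kappa^+$-distributive structure of $\dot{\qo}$ so that one can literally pin down a small coordinate-block controlling the decision, and then argue that all $<\kappa$-alterations either miss it or are pre-decided. This is essentially a bookkeeping argument combining Merimovich's name analysis (\cite{Merimovich}, \cite{Merimovich2}) with the equivalence $\qo_\omega \cong \Add(\kappa^+,1)$; I expect it to be routine but requires care to state the ``small support'' property of the relevant club name correctly.
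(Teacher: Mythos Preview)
Your proposal misreads the setting of the claim. The name $\dot{C}$ here is a $\po^*$-name, where $\po^* \cong \Add(\kappa^+,\kappa^{++})$ is just the $\kappa^+$-closed Cohen forcing; the stable-forcing relation $f \Vdash^s \check\alpha \in \dot C$ is defined for $f \in \po^*$, and the claim asks only that $\dot C^s$ is forced (by $\po^*$) to be unbounded. None of the Prikry property of $\po_E$, the equivalence $\qo_\omega \cong \Add(\kappa^+,1)$, or Merimovich's name analysis enters. Starting from a condition $p = \langle f,T\rangle \in \po_E$ and invoking direct extensions is the wrong framework, and the various detours through ``bounded support of $\alpha \in \dot C$'' and ``choosing $\alpha$ with support disjoint from a fixed block'' do not make sense here: an alteration may hit any $<\kappa$-sized subset of $\dom(f^*)$, so there is no fixed block you can protect, and for an arbitrary $\po^*$-name $\dot C$ there is no a priori ``coordinate support'' for the statement $\alpha \in \dot C$.

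The paper's argument is much simpler and is the idea you are missing. Given $f \in \po^*$ and $\alpha_0 < \kappa^+$, take $N \prec H_\theta$ of size $\kappa$ with ${}^{<\kappa}N \subseteq N$ and $f,\dot C,\alpha_0 \in N$. Using $\kappa^+$-closure of $\po^*$, build an $(N,\po^*)$-generic condition $f^* \geq f$, and set $\alpha = \sup(N \cap \kappa^+) > \alpha_0$. Now the key point: any $<\kappa$-sized alteration of $f^*$ is itself an element of $N$ (since ${}^{<\kappa}N \subseteq N$), so the altered condition $f'$ is still $(N,\po^*)$-generic; hence $f'$ forces $\alpha \in \dot C$ by the usual genericity argument for club names. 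Thus $f^* \Vdash^s \check\alpha \in \dot C$. This master-condition/elementary-submodel trick handles all $<\kappa$-alterations uniformly, which is exactly what your coordinate-bookkeeping approach cannot do.
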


\begin{proof} Fix some $f \in \mathbb{P}^*$ and $\alpha_0< \kappa^+$.  Take some
sufficiently large $\theta$ and some $N \prec H_\theta$ of size $\kappa$ such
that $\mathbb{P}^*,f,\dot{C},\alpha_0 \in N$ and ${}^{<\kappa}N \subseteq N$.
Since $\po^*$ is $\kappa^+$-closed we can find a $(\po^*,N)$-generic condition
$f^* \geq f$ with $\dom(f^*) = N \cap \kappa^{++}$.  Let $\alpha = \sup(N \cap
\kappa^{++})$.

Let $f'$ be any condition obtained by altering $f^*$ on a set of size less than
$\kappa$.  Since ${}^{<\kappa}N \subseteq N$, the alteration is a member of $N$.
Hence a standard argument shows that $f'$ is also $(\po^*,N)$-generic and so $f'
\Vdash \check{\alpha} \in \dot{C}$. \end{proof}

The name $\dot{C}^s$ behaves well when translated to a $\mathbb{P}$-name.  Let
\[\dot{E} = \{ (\check{\alpha},p) \mid f^p \Vdash^s \check{\alpha} \in \dot{C}
\}.\]

\begin{claim} $\dot{E}$ is forced by $\mathbb{P}$ to be a club subset
of $\kappa^+$. \end{claim}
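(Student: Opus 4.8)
The plan is to transfer the properties of $\dot{C}^s$ established in the preceding claims through the forcing projection $\mathbb{P}\to\mathbb{P}^*$ given by $p\mapsto f^p$. First I would recall that $\mathbb{P}^*$ is a projection of $\mathbb{P}$ (every $\mathbb{P}$-generic $G$ yields a $\mathbb{P}^*$-generic $G^*=\{f^p\mid p\in G\}$), so a $\mathbb{P}$-name of the form $\{(\check\alpha,p)\mid f^p\Vdash^s\check\alpha\in\dot C\}$ is really the pullback of the $\mathbb{P}^*$-name $\dot C^s$; in particular its interpretation $\dot E^G$ equals $(\dot C^s)^{G^*}$, which we already know is a closed unbounded subset of $\kappa^+$. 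So at the level of interpretations the statement is immediate; the remaining work is purely to see that the density arguments go through inside $\mathbb{P}$ rather than $\mathbb{P}^*$.

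\emph{Closedness.} Suppose $p\Vdash_{\mathbb{P}}$ "$\gamma$ is a limit of $\dot E$"; I must find $q\geq^* p$ (or at least $q\geq p$) forcing $\gamma\in\dot E$. Working below $p$, and using that the Cohen part $f^p$ is only ever extended in $\mathbb{P}^*$ (clauses (i),(ii) of the extension) while the tree/one-point data do not affect membership of ordinals in $\dot C^s$, one reduces to the corresponding statement for $\mathbb{P}^*$: since $\dot C^s$ is forced closed, there is $g\geq f^p$ in $\mathbb{P}^*$ with $g\Vdash^s\check\gamma\in\dot C$, and the condition $q$ obtained from $p$ by replacing its Cohen part with $g$ (and shrinking trees so that the splitting sets remain $E(\dom g)$-large, as in the definition of the forcing) is a direct extension of $p$ with $(\check\gamma,q)\in\dot E$. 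Here I would invoke the second of the two "straightforward claims" above ($f\Vdash^s\check\alpha\in\dot C\Rightarrow f\frown\vec\nu\Vdash^s\check\alpha\in\dot C$) to guarantee that passing back and forth between $p$ and its one-point extensions does not disturb stable forcing, so the name is genuinely closed and not merely closed after collapsing trees.

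\emph{Unboundedness.} Given $p\in\mathbb{P}$ and $\alpha_0<\kappa^+$, apply Claim \ref{claim: unbounded stable club} to $f^p$ and $\alpha_0$: there is $g\geq f^p$ in $\mathbb{P}^*$ and $\alpha\geq\alpha_0$ with $g\Vdash^s\check\alpha\in\dot C$. Let $q\geq^* p$ be the direct extension of $p$ whose Cohen component is $g$ and whose tree component is $T^p$ thinned to be $E(\dom g)$-large (legitimate, since $\dom f^p\subseteq\dom g$ and the relevant Rudin–Keisler projections are available). Then $q\Vdash_{\mathbb{P}}\check\alpha\in\dot E$ and $\alpha\geq\alpha_0$, so $\dot E$ is forced unbounded. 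Combining the two parts, $\dot E$ is forced to be club.

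\emph{Main obstacle.} The only delicate point is the interaction between the one-point (Prikry) extensions in $\mathbb{P}$ and the notion of stable forcing, which is defined for $\mathbb{P}^*$: I must be sure that whether $\check\alpha\in\dot C$ is stably forced depends only on the Cohen part $f^p$ and is preserved under the operation $p\mapsto p\frown\vec\nu$ and its inverse. This is exactly what the two "straightforward claims" preceding the statement provide, so the argument reduces cleanly to the already-established properties of $\dot C^s$ over $\mathbb{P}^*$; no genuinely new combinatorics about extender-based forcing is needed here.
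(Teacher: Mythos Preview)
Your projection claim is incorrect: the map $p \mapsto f^p$ is \emph{not} order-preserving from $\mathbb{P}$ to $\mathbb{P}^*$, because a one-point extension $p \frown \nu$ alters the values $f^p(\alpha)$ on $\dom(\nu)$ rather than merely enlarging the domain (extension type (i) only adds new coordinates). So $\{f^p \mid p \in G\}$ is not a $\mathbb{P}^*$-filter, and $\dot{E}^G$ cannot be read off as $(\dot{C}^s)^{G^*}$ for any genuine $\mathbb{P}^*$-generic $G^*$. This is precisely why stable forcing was introduced: the relation $\Vdash^s$ is invariant under the small alterations caused by one-point extensions, whereas ordinary Cohen extension is not.

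Your unboundedness argument is fine and matches the paper's. Your closedness argument, however, has a gap at the step ``one reduces to the corresponding statement for $\mathbb{P}^*$.'' From $p \Vdash_{\mathbb{P}}$ ``$\delta$ is a limit of $\dot{E}$'' you cannot directly conclude anything about $f^p$ in $\mathbb{P}^*$ (there is no projection to carry the statement across); the link must be built by hand. The paper does this inductively: for each $\alpha < \delta$, some $\mathbb{P}$-extension $p' \geq^* p \frown \vec\nu$ witnesses a point $\gamma \in (\alpha,\delta) \cap \dot{E}$, i.e.\ $f^{p'} \Vdash^s \gamma \in \dot{C}$; now invoke the second straightforward claim to \emph{undo} the Prikry part, obtaining a pure Cohen extension $f^* \geq f^p$ with $f^* \Vdash^s \gamma \in \dot{C}$. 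Iterating $\cf(\delta) \leq \kappa$ times using the $\kappa^+$-closure of the Cohen order yields a single $\bar{f} \geq f^p$ stably forcing a cofinal sequence into $\dot{C}$, hence $\bar{f} \Vdash^s \delta \in \dot{C}$, hence $\langle \bar{f}, \bar{T}\rangle \Vdash_{\mathbb{P}} \delta \in \dot{E}$. You correctly identified the relevant tool in your ``main obstacle'' paragraph, but the body of your argument leans on the nonexistent projection instead of carrying out this construction.
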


\begin{proof} 
By the arguments of Claim \ref{claim: unbounded stable club}, $\dot{E}$ is unbounded.
Let us show first that $\dot{E}$ is forces to be closed. 

Let $p = \l f, T\r$ be a condition that forces that $\delta$ is an accumulation point 
of $\dot{E}$.
Therefore, for every $\alpha < \delta$ there are
$\vec{\nu} = \l \nu_0,\dots,\nu_{n-1}\r \in T$, 
$p' = \l f' , T'\r \leq^* p \fr \vec{\nu}$, and $\gamma < \delta$ such that 
$f' \Vdash^s \gamma \in \name{C}$. Clearly, $f'$ is a Cohen extension of $f \fr \vec{\nu}$. 
Denote by $f^*$ the Cohen extension of $f$ for which $f' = f^* \fr \vec{\nu}$. 
Since $f' \Vdash^s \gamma \in \name{C}$, $f^* \Vdash^s \gamma \in \name{C}$ as well. It follows that the condition $p^* = \l f^*,T^*\r$, $T^* = \pi_{\dom(f^*),\dom(f)}^{-1}(T)$ is a direct extension of $p$ and forces ''$\can{\gamma} \in \dot{E}$''.

Using the fact the Cohen extension order on the function $f$ is $\kappa^+$-closed and $\cf(\delta) \leq \kappa$, we can repeat this process and construct a sequence of Cohen extensions $\l f_i \mid i \leq \cf(\delta)\r$ of $f$, and an increasing sequence $\l \gamma_i \mid i < \cf(\delta)\r$ cofinal in $\delta$, such that 
$f_i \Vdash^s \can{\gamma_i} \in \name{C}$. 
Let $\bar{f} = f_{\cf(\delta)}$.
Since $\name{C}$ is a Cohen name of a club, $\bar{f} \Vdash^s \can{\delta} \in \dot{C}$. Setting $\bar{T} = \pi_{\dom(\bar{f}),\dom(f)}^{-1}(T)$ and 
 $\bar{p} = \l \bar{f},\bar{T}\r$, we have that $\bar{p}$ is a direct extension of $p$, and forces $\can{\delta} \in \dot{E}$. 
\end{proof}

\begin{remark} \label{closed-extension} The proofs of the previous two claims
work for any $\mathbb{P}^*$-name for a club $\dot{C}$ in any $\kappa^+$-closed generic
extension. \end{remark}

We are now ready to finish the proof of Theorem \ref{mainthm3}.

\begin{claim} In $M_\omega[G_\omega][\mathcal{H}]$, every finite collection of
stationary subsets of $j_\omega(\kappa^+)$ reflects at a common point.
\end{claim}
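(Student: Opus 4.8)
The plan is to follow the template of \cite{HU}: reduce simultaneous reflection in $M_\omega[G_\omega][\mathcal{H}]$ to simultaneous reflection in the models $M_n[G_n][\mathcal{H}_n]$ via the factor embeddings $j_{n,\omega}$. Fix $r<\omega$ and stationary sets $S_0,\dots,S_{r-1}\in M_\omega[G_\omega][\mathcal{H}]$ of subsets of $j_\omega(\kappa^+)$. First I would make two reductions. In $M_\omega[G_\omega][\mathcal{H}]$ the cardinal $j_\omega(\kappa)$ is singular of cofinality $\omega$, and $(j_\omega(\kappa),j_\omega(\kappa^+))$ contains no cardinals, so every ordinal below $j_\omega(\kappa^+)$ has cofinality $<j_\omega(\kappa)$. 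Using Claim~\ref{komegaH}, the club $D_0$ added by the generic ${<}k^*_\omega``H{>}$ lies in $M_\omega[G_\omega][\mathcal{H}]$ and is disjoint from $(S^{j_\omega(\kappa^+)}_{j_\omega(\kappa)})^{M_\omega}$; replacing each $S_i$ by $S_i\cap D_0$, I may assume no $\alpha\in S_i$ has $M_\omega$-cofinality $j_\omega(\kappa)$ — it is exactly these points that would break the pull-back step. Partitioning each $S_i$ by the cofinality of its members and keeping a stationary piece, I may assume $S_i$ concentrates on a fixed cofinality $\rho_i$, and I fix $n_0<\omega$ with $\rho_i<j_{n_0}(\kappa)$ for all $i<r$; since $G_n\times\mathcal{H}_n$ preserves cofinalities, a point of cofinality $\rho_i$ in $M_n[G_n][\mathcal{H}_n]$ also has $M_n$-cofinality $\rho_i<j_n(\kappa)=\crit(j_{n,\omega})$ for every $n\ge n_0$.

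Next, for $n\ge n_0$ and $i<r$ I set $S_i^n=\{\gamma<j_n(\kappa^+)\mid j_{n,\omega}(\gamma)\in S_i\}$. By Lemma~\ref{intersection2} and the name argument in its proof, $S_i^n\in M_n[G_n][\mathcal{H}_n]$, and by the preceding remark it concentrates on cofinality $\rho_i<j_n(\kappa)$. Now there are two cases. Suppose first that for some $n\ge n_0$ all of $S_0^n,\dots,S_{r-1}^n$ are stationary in $M_n[G_n][\mathcal{H}_n]$. By elementarity $j_n(\kappa)$ is $j_n(\kappa)^+$-$\Pi^1_1$-subcompact in $M_n$, indestructibly under $\Add(j_n(\kappa)^+,j_n(\kappa)^{++})$; and $M_n[G_n][\mathcal{H}_n]$ is obtained from $M_n$ by a forcing that rearranges as a forcing of size $<j_n(\kappa)$ followed by $\Add(j_n(\kappa)^+,j_n(\kappa)^{++})$, so by L\'evy--Solovay and this indestructibility (as in Lemma~42 of \cite{HU}) $j_n(\kappa)$ remains $j_n(\kappa)^+$-$\Pi^1_1$-subcompact there. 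Consequently simultaneous reflection holds in $M_n[G_n][\mathcal{H}_n]$ for collections of fewer than $j_n(\kappa)$ stationary subsets of $S^{j_n(\kappa)^+}_{<j_n(\kappa)}$, and since $r<\omega<j_n(\kappa)$ we get $\delta<j_n(\kappa)^+$ with $\cf(\delta)<j_n(\kappa)$ at which every $S_i^n$ reflects. As $\cf(\delta)<\crit(j_{n,\omega})$, $j_{n,\omega}$ is continuous at $\delta$, so for any club $C\subseteq j_{n,\omega}(\delta)$ in $M_\omega[G_\omega][\mathcal{H}]$ the set $\{\gamma<\delta\mid j_{n,\omega}(\gamma)\in C\}$ is a club in $\delta$ belonging (cf. Section~\ref{reflection}) to $M_n[G_n][\mathcal{H}_n]$; it meets each $S_i^n$, hence $C\cap S_i\ne\emptyset$. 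Thus $j_{n,\omega}(\delta)$ is a common reflection point and we are done in this case.

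It remains to rule out the other case, that for every $n\ge n_0$ some $S_{i_n}^n$ is nonstationary in $M_n[G_n][\mathcal{H}_n]$. Pick $i^*<r$ and an infinite $A\subseteq[n_0,\omega)$ with $i_n=i^*$ for $n\in A$, and for $n\in A$ fix a club $C_n\subseteq j_n(\kappa^+)$ in $M_n[G_n][\mathcal{H}_n]$ disjoint from $S_{i^*}^n$. Now $C_n$ lives in a $\kappa^+$-closed generic extension of $M_n$, while $G_\omega$ is a $j_\omega(\mathbb{P}_E)$-generic, not a $j_\omega(\mathbb{P}_E^*)$-generic; so, by Remark~\ref{closed-extension} and the $\dot{C}^s,\dot{E}$ construction, I would pass from a $j_n(\mathbb{P}_E^*)$-name for $C_n$ to a $j_n(\mathbb{P}_E)$-name $\dot{E}_n$ forced to be a club subset of $C_n$ and evaluated correctly by the $M_n$-image of $G_\omega$. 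Applying the lift of $j_{n,\omega}$ (over $\mathcal{H}_n$, as in Lemma~\ref{intersection2}) turns $\dot{E}_n$ into a $j_\omega(\mathbb{P}_E)$-name; evaluating by $G_\omega$ gives a club $D_n\subseteq j_\omega(\kappa^+)$ with $D_n\subseteq j_{n,\omega}(C_n)$, and, by elementarity of the lift and the definition of $S_{i^*}^n$, $D_n\cap S_{i^*}\cap(j_{n,\omega}``j_n(\kappa^+))=\emptyset$. Since $M_\omega[G_\omega][\mathcal{H}]$ is closed under $\omega$-sequences and contains $G_\omega$, the sequence $\langle D_n\mid n\in A\rangle$ belongs to $M_\omega[G_\omega][\mathcal{H}]$, so $D:=\bigcap_{n\in A}D_n$ is a club there. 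Finally, as the ranges $j_{n,\omega}``j_n(\kappa^+)$ ($n\in A$) increase and have union $j_\omega(\kappa^+)$, every $\gamma\in S_{i^*}$ lies outside some $D_n$, so $D\cap S_{i^*}=\emptyset$, contradicting the stationarity of $S_{i^*}$. Hence the first case must occur and the collection reflects.

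The step I expect to be the main obstacle is this last one: transporting the witnessing clubs $C_n$ to $M_\omega[G_\omega][\mathcal{H}]$ and assembling them into a single sequence living there. The point is that $G_\omega$ is an extender-based Prikry generic, so a Cohen ($\mathbb{P}_E^*$-) name for $C_n$ need not evaluate correctly against $G_\omega$ after being pushed up; the stable-forcing calculus ($\Vdash^s$, $\dot{C}^s$, $\dot{E}$, Remark~\ref{closed-extension}) developed just above is precisely what makes the transported names robust, and one still has to check that $\langle j_{n,\omega}(\dot{E}_n)\mid n\in A\rangle$ is captured by $M_\omega[G_\omega][\mathcal{H}]$ via its closure.
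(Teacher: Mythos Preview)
Your proposal is correct and follows essentially the same route as the paper: pull the $S_i$ back along $j_{n,\omega}$ to $T_n^i$, use indestructible simultaneous reflection in $M_n[G_n][\mathcal{H}_n]$ when all pullbacks are stationary (your club-pullback argument here is equivalent to the paper's push-forward of small witnessing sets $A_i\in M_n$), and otherwise transport disjoint clubs forward via the $\dot C^s/\dot E$ stable-forcing device and $\omega$-closure of $M_\omega[G_\omega][\mathcal{H}]$ to contradict stationarity of some $S_{i^*}$. One minor slip: there is no ``forcing of size $<j_n(\kappa)$'' in the decomposition of $M_n[G_n][\mathcal{H}_n]$ over $M_n$; both $G_n$ and $\mathcal{H}_n$ are generics for $j_n(\kappa)^+$-closed posets, and it is their absorption into $\Add(j_n(\kappa)^+,j_n(\kappa)^{++})$ that yields the required indestructibility---this does not affect the rest of your argument.
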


\begin{proof} Let $S_i$ for $i < m$ be a sequence of stationary sets in
$M_\omega[G_\omega][\mathcal{H}]$.  By Claim \ref{komegaH}, ${<}k_\omega``H{>}$ is a
club in $j_\omega(\kappa^+)$ disjoint from
$S_{j_\omega(\kappa)}^{j_\omega(\kappa^+)}$ as computed in $M_\omega$ and
${<}k_\omega``H{>} \in M_\omega[G_\omega][\mathcal{H}]$.  So we can assume that each
$S_i$ concentrates on a fixed cofinality below $j_\omega(\kappa)$.  For
simplicity we assume that the cofinalities of the $S_i$ are bounded below
$\kappa$.

Let $T_n^i$ be the set of $\alpha$ such that $j_{n,\omega}(\alpha) \in
S_i$.  For a fixed $n<\omega$, $T_n^i$ for $i < m$ is definable in
$M_n[G_n][\mathcal{H}_n]$.  By the indestructibility of stationary reflection at
$j_n(\kappa^+)$ in $M_n$, if each $T_n^i$ for $i < m$ is stationary, then they
reflect at a common point.  Suppose that $\delta$ is this common reflection
point.  Then there are sets $A_i\in M_n$ for $i <k$ with $\otp(A)=\cf(\delta)$
such that $A_i$ is stationary in $\delta$ and $A_i \subseteq T_n^i$.  It follows
that for each $i<m$, $j_{n,\omega}(A_i) = j_{n,\omega}``A_i \subseteq S_i$ and
hence the collection of $S_i$ reflect at $j(\delta)$.

We assume for the sake of a contradiction that for each $n<\omega$ at least one
of the $T_n^i$ is nonstationary.  Let $\dot{C}_n$ be a name whose interpretation
by $G_n$ and $\mathcal{H}_n$, $C_n$, is a club disjoint from some $T_n^{i_n}$.
By the arguments above and Remark \ref{closed-extension}, we can work in
$M_n[\mathcal{H}_n]$ and translate $\dot{C}_n$ to a $j_n(\mathbb{P})$-name
$\dot{E}_n$ for a club subset of $j_n(\kappa^+)$.  By the construction of
$\dot{E}_n$, we have that for $\alpha < j_n(\kappa^+)$, $j_{n,\omega}(\alpha)$
is in $j_{n,\omega}(\dot{E}_n)^{G_\omega}$ if and only if it is in
$j_{n,\omega}(\dot{C}_n)^{{<}j_{n,\omega}``G_n{>}}$.

By the closure of $M_\omega[G_\omega]$ under
$\omega$-sequences, $\langle j_{n,\omega}(\dot{E}_n) \mid n<\omega \rangle$ is
in $M_\omega[G_\omega]$.  Hence we can interpret it using $G_{\omega}$ and
$j_{n,\omega}``\mathcal{H}_n$.  Let $D_n$ denote the resulting club.  Let $i^*<m$ be such that $i^* = i_n$ for infinitely many $n$.  We claim
that $\bigcap_{n<\omega}D_n$ is disjoint from $S_{i^*}$.  Otherwise, we have
$j_{n,\omega}(\alpha) \in D_n \cap S_{i^*}$ for some $n$ such that $i^* = i_n$.
It follows that $\alpha \in C_n \cap T_n^{i_n}$, a contradiction. \end{proof}
\subsection{Down to $\aleph_{\omega}$}
In the paper \cite{SigmaPrikryIII} it is shown that it is consistent relative to infinitely many supercompact cardinals, that stationary reflection at $\aleph_{\omega+1}$ together with the failure of $\SCH$ at $\aleph_{\omega}$. 

Let us show how to obtain the same result in our case, starting with the assumption of $\kappa^+$-$\Pi^1_1$-subcompact that carries a $(\kappa,\kappa^{++})$-extender, as in the previous part. 
\begin{theorem}
Let $\kappa$ be a $\kappa^+$-$\Pi^1_1$-subcompact cardinal and let us assume that it carries a $(\kappa,\kappa^{++})$-extender. Then, there is a forcing extension in which $\kappa=\aleph_\omega$, $2^{\aleph_{\omega}} = \aleph_{\omega+2}$ and $\GCH$ holds below $\aleph_{\omega}$ and every finite collection of stationary subsets of $\aleph_{\omega+1}$ reflect simultaneously.
\end{theorem}

The proof of this theorem is essentially the same as the proof of Theorem \ref{mainthm3}. We follow the argument as in the previous subsection and explain the differences. 

Let us prepare the universe so that $\GCH$ holds and the subcompactness of $\kappa$ is indestructible under Cohen forcing $\Add(\kappa^+,\kappa^{++})$. Let $j_E \colon V \to M_1$ be the extender ultrapower embedding. Let us assume moreover that the approachability ideal is always maximal. Namely, for every regular uncountable cardinal $\rho$, $\rho \in I[\rho]$

We define the iterations $\langle j_{n,m} \colon M_n\to M_m \mid n < m \leq \omega\rangle$, and $\langle \iota_{n,m} \colon N_n \to N_m \mid n < m \leq \omega\rangle$ as before.

By standard arguments (see for example \cite[Lemma 8.5 and Proposition 15.1]{CummingsHandbook}, and use Claim \ref{claim:bounding-the-width-of-k} for the computation of the width), there is a $M_1$-generic filter $K\subseteq \Col(\kappa^{+3}, <j_E(\kappa))$. Moreover, by elementarity for every $n$, the filter $j_n(K)$ is $M_{n+1}$-generic. Fix $K_0 \subseteq \Col(\omega_4, <\kappa)$ a $V$-generic and let 
\[\mathcal{K} = \langle K_0\rangle^\smallfrown \langle j_n(K) \mid n < \omega\rangle\]

Let us define the filters $G_n, \mathcal{H}_n$ and the models $M_n$ for $n \leq \omega$ exactly as before and let us consider the model $M_{\omega}[G_\omega][\mathcal{H}][\mathcal{K}]$. Let $K_n = \mathcal{K} \restriction n + 1$.

In \cite{HayutBD}, it is shown that the parallel of Lemma \ref{intersection1}, namely $M_\omega[G_\omega][\mathcal{K}] = \bigcap M_n[G_n][K_n]$, holds.

The next point in the proof in which we need to modify the argument slightly is Lemma \ref{intersection2}, in which we need to show \[M_\omega[G_\omega][\mathcal{K}][\mathcal{H}] = \bigcap M_n[G_n][K_n][\mathcal{H}_n].\] 

As there are elementary embeddings \[j_{n, \omega} \colon M_n[\mathcal{H}_n] \to M_\omega[<j_{n,\omega}``\mathcal{H}_n>] \subseteq M_\omega[\mathcal{H}]\] the argument of \cite{HayutBD} for extender based Prikry forcing with interleaved collapses works without further modifications. 

The claims for the existence of a generic for the forcing that shoots a club disjoint from $j_\omega(S^{\kappa^{+}}_{\kappa})$, Claim \ref{komegaH}, remains the same. We are now ready for the proof of the main theorem of this subsection.
\begin{theorem}\label{thm:down-to-aleph-omega}
In $M_{\omega}[G_\omega][\mathcal{H}][\mathcal{K}]$ $\SCH$ fails at $j_{\omega}(\kappa)$ which is $\aleph_{\omega}$ and every finite collection of stationary subsets of $j_{\omega}(\kappa^+)$ reflects simultaneously.  
\end{theorem}
\begin{proof}
Let us verify first that the only cardinals which are collapsed are the cardinals in the intervals $[\omega_4, \kappa) \cup [\kappa^{+3}, j_1(\kappa)) \cup \cdots [j_n(\kappa^{+3}), j_{n+1}(\kappa)) \cup \cdots$.

As mentioned above,
\[M_{\omega}[G_\omega][\mathcal{H}][\mathcal{K}] 
= \bigcap M_{n}[G_n][\mathcal{H}_n][K_n]\] 
and in particular, every $M_\omega$-cardinal below $j_\omega(\kappa)$ which is collapsed in the extension must be collapsed in $M_{n}[G_n][\mathcal{H}_n][K_n]$ for all large $n$. For cardinals above $j_{\omega}(\kappa)$ one can use the continuity of the elementary embeddings in order to show that they do not change cofinality. 
Therefore, the arguments for the failure of $\SCH$ remains the same. 

Let us prove the stationary reflection. For simplicity, let us show that every stationary subset of $j_\omega(\kappa^+)$ reflects in this model. The general case is similar.

There are two components in the proof which are slightly different: First, given a stationary set $S \in M_{\omega}[G_\omega][\mathcal{H}][\mathcal{K}]$, one can pull it to a set $T_n \in M_n[G_n][\mathcal{H}_n][K_n]$ exactly as before and check whether it is stationary there. Note that the forcing adding $K_n$ is of cardinality $j_n(\kappa)$. So, if $T_n$ is stationary in $M_n[G_n][\mathcal{H}_n][K_n]$ then there is a single condition $q \in K_n$ and stationary set $T'_n \in M_n[G_n][\mathcal{H}_n]$ such that $q \Vdash T'_n \subseteq T_n$. 

Then, by the $j_n(\kappa^+)$-subcompactness of $j_n(\kappa)$ in $M_n[G_n][\mathcal{H}_n]$ one can conclude that $T_n'$ has a reflection point $\delta$ of cofinality $<j_n(\kappa)$ and above $j_{n-1}(\kappa)^{+++}$ (see \cite{HU} for details). This is where the assumption about the approachability plays a role: we conclude that this stationary subset of $\delta$ remains stationary in $M_n[G_n][\mathcal{H}_n][\mathcal K_n]$. So, there is a stationary subset of $\delta$ of order type $\cf \delta$ in $M_n$, $T_n{''}\subseteq T_n \cap \delta$. Since $j_{n,\omega}(T_n'') = j_{n,\omega} \text{ `` } T_n''$, we conclude that $M_{\omega}[G_\omega][\mathcal{H}][K_n]\models j_{n,\omega}(T_n'') \subseteq S$.

By downwards absoluteness of stationarity and continuity of the embedding $j_{n,\omega}$, $j_{n,\omega}(T_n'')$ is stationary at $j_{n,\omega}(\delta)$ in $M_{\omega}[G_\omega][\mathcal{H}][K_n]$ and thus $S$ reflects at $j_{n,\omega}(\delta)$ in this model. 

To summarize, if there is $n$ such that $T_n$ is stationary, then $S$ reflects in $M_\omega[G_\omega][\mathcal{H}][\mathcal{K}]$. 

Otherwise, for all $n$, $T_n$ is non-stationary. In this case, for each $n$ there is a club name for a club $\dot{C}_n$ disjoint from $T_n$ and we would like to push them (or subclubs of them) to $M_{\omega}[G_\omega][\mathcal{H}][\mathcal{K}]$. 

For every $n$, the forcing introducing $K_n$ is $j_n(\kappa)$-cc, and therefore, for every club $C_n \in M_n[G_n][\mathcal{H}_n][K_n]$ there is a subclub $C'_n \in M_n[G_n][\mathcal{H}_n]$. We are now in exactly the same situation of the proof of Theorem \ref{mainthm3}: we obtain for each $n$ a stable name $\dot{E}_n$ which is forced to be a subclub of $C'_n$ and thus there is in $M_{\omega}[G_\omega][\mathcal{H}]$ a subclub of the set $\bigcap j_{n,\omega}(\dot{E}_n)$ which is forced to be disjoint from $S$. 
\end{proof}
\begin{remark}
The same method can be used to obtain arbitrary finite gap in the failure of $\SCH$. 
\end{remark}
\providecommand{\bysame}{\leavevmode\hbox to3em{\hrulefill}\thinspace}
\providecommand{\MR}{\relax\ifhmode\unskip\space\fi MR }
\providecommand{\MRhref}[2]{%
  \href{http://www.ams.org/mathscinet-getitem?mr=#1}{#2}
}
\providecommand{\href}[2]{#2}

\end{document}